\newcommand{\mailurl}[1]{\email{\href{mailto:#1}{#1}}}
\let\uml\"
\title[Inhomogeneous Diophant{i}ne approximat{i}on over power series f{i}elds]
{Inhomogeneous Diophant{i}ne approximat{i}on over f{i}elds of formal power series}
\author{Yann Bugeaud}
\address{\protect{Universit\'{e} de Strasbourg, 7, rue Ren\'{e} Descartes, 67084 Strasbourg, FRANCE}}
\author{L.~Singhal}
\address{\protect{Beijing Internat{i}onal Center for Mathemat{i}cal Research, Peking University, 100\,871 Beijing, P.~R.~CHINA}}
\author{Zhenliang Zhang}
\address{\protect{School of Mathemat{i}cal Sciences, Henan Inst{i}tute of Science and Technology, 453\,003 Xinxiang, P.~R.~CHINA}}
\keywords{Asymptot{i}c and uniform exponents, Diophant{i}ne approximat{i}on}
\subjclass[2010]{}
\newtheorem{theorem}{Theorem}[section]
\newtheorem{lem}[theorem]{Lemma}
\newtheorem{prop}[theorem]{Proposit{i}on}
\newtheorem{cor}[theorem]{Corollary}
\theoremstyle{definition}
\newtheorem{Def}[theorem]{Definit{i}on}
\newtheorem{notation}[theorem]{Notat{i}on}
\numberwithin{equation}{section}
\newcommand{\abs}[1]{\ensuremath{\left\lvert\,#1\,\right\rvert}}
\newcommand{\braket}[1]{\ensuremath{\left\langle\,#1\,\right\rangle}}
\newcommand{\C}{\ensuremath{\mathbb{C}}}
\newcommand{\eps}{\ensuremath{\varepsilon}}
\newcommand{\F}{\ensuremath{\mathbb{F}}}
\newcommand{\FqT}{\ensuremath{\F_q \left[ T \right]}}
\newcommand{\Fqt}{\ensuremath{K}}
\newcommand{\FqTinv}{\ensuremath{\F_q \left[\left[ T^{-1} \right]\right]}}
\newcommand{\Fqtinv}{\ensuremath{K_{\infty}}}
\newcommand{\hatmu}[2]{\ensuremath{\widehat{\mu}\,\big(\,#1,\,#2\,\big)}}
\newcommand{\hatomega}[1]{\ensuremath{\widehat{\omega} \left( #1 \right)}}
\newcommand{\invT}{\ensuremath{T^{-1}}}
\newcommand{\N}{\ensuremath{\mathbb{N}}}
\newcommand{\norm}[1]{\ensuremath{\left\|\,#1\,\right\|}}
\renewcommand{\P}{\ensuremath{\underline{P}}}
\newcommand{\R}{\ensuremath{\mathbb{R}}}
\newcommand{\SLtwoFqT}{\ensuremath{\operatorname{SL}_2 (\,\FqT\,)}}
\newcommand{\thetabar}{\ensuremath{\underline{\theta}}}
\newcommand{\tp}[1]{\ensuremath{{}^t #1}}
\newcommand{\vol}[1]{\ensuremath{\nu \left( #1 \right)}}
\newcommand{\x}{\ensuremath{\underline{x}}}
\newcommand{\y}{\ensuremath{\underline{y}}}
\newcommand{\Z}{\ensuremath{\mathbb{Z}}}
\newcommand{\zero}{\ensuremath{\underline{0}}}
\begin{document}

\begin{abstract}
We prove a sharp analogue of Minkowski's inhomogeneous approximation theorem
over fields of power series $\mathbb{F}_q((T^{-1}))$.
Furthermore, we study the approximation to a 
given point $\underline{y}$ in $\mathbb{F}_q((T^{-1}))^2$ 
by the $SL_2(\mathbb{F}_q[T])$-orbit of a given point 
 $\underline{x}$ in $\mathbb{F}_q((T^{-1}))^2$. 
\end{abstract}

\maketitle
%\tableofcontents

  \section{\protect{Introduct{i}on}}\label{S:intro}

By using geometry of numbers,
Minkowski, improving an earlier result of Tsche\-bychev, 
established that for any irrational numbers $\theta$ and any real number 
$\alpha$ not belong to $\mathbb{Z}\theta+\mathbb{Z}$,
there exist infinitely many pairs of integer $(p,q)$,
with $q\neq0$, such that
$$
\abs{q\theta-\alpha-p}\leq \frac{1}{4\abs{q}},
$$
see \cite{Cas57} for the details. The value $1/4$ is best possible. 
Recently, \citeauthor{LN12}~\cite{LN12} obtained an analogous result 
for the orbit of ${\rm SL}(2, \Z) {\xi \choose 1}$ in $\R^2$. 
The purpose of the present paper is to establish analogues 
in the setting of formal power series of these two results 
in inhomogeneous approximation.

Let $q$ be a prime power and $\F_q$ the finite field of order $q$.
Recall that $\F_q[T]$ and $K = \F_q (T)$ 
denote the ring of polynomials and the field of rational functions over $\F_q$, respectively.
Let \Fqtinv\ $=\F_q((T^{-1}))$ denote the field of formal power series 
$x=\sum_{i=-n}^\infty a_{i} T^{-i}$ over the field $\F_q$. 
We equip $\mathbb{F}_q((T^{-1}))$ with the norm $\|x\|=q^n$,
where $a_{-n} \neq 0$ is the first non-zero coefficient in the expansion of the
non-zero power series $x$. This integer $n$ is called the {{degree}} of $x$ and denoted by $\deg x$.

As \Fqtinv\ is a locally compact group under addit{i}on,
it comes with a Haar measure $\nu$ def{i}ned upto mult{i}plicat{i}on by a posi{t}ive constant.
We normalize so that $\vol{\invT \FqTinv} = 1$.
Abusing notat{i}on, we also use $\nu$ to denote the $n$-fold product measure 
on $\Fqtinv^n$ for all $n \geq 1$.
The `integral part' $[ x ]$ of any element $x$ in $\Fqtinv$ stands for the unique polynomial $P$
for which $x - P$ in $\invT\FqTinv$ and $\braket{x}$ refers to $x - [x]$ 
with its $q$-adic norm denoted as
    \begin{equation}\label{E:normx}
      \norm{x} := \abs{\braket{x}}.
    \end{equation}
The norm $\abs{\thetabar}$ of any $\thetabar = \tp{( \theta_1, \ldots, \theta_n )} \in \Fqtinv^n$
equals $\max_{i} q^{\deg \theta_i}$ and its supremum distance from the nearest $\P$ in 
$\FqT^n$ is denoted by
$\norm{\thetabar}_{\Fqtinv}$.
The subscript \Fqtinv\ in the norm expressions will be hidden from now on and
we hope it will be clear from the context as to which norm is being referred to.

Over the fields of formal power series,
there are many results concerning the metrical properties 
for the inhomogeneous diophantine approximation sets,
such as \cite{MS08,Fuc10,Kri11,KN11},
but it seems that the analogue of Minkowski's inhomogeneous 
approximation theorem has not yet been published. The following theorem can also be seen as the inhomogeneous 
version of Dirichlet's Theorem, which says
that for every power series $\xi$ in $\Fqtinv \setminus \Fqt$,
there exist inf{i}nitely many polynomials $Q, P \in \F_q [T]$ such that
    \begin{equation}\label{E:Dirichlet}
      \norm{ Q \xi } \leq \dfrac{1}{q \abs{Q}},
    \end{equation}
see Lemma \ref{L:legendre} below.
\begin{theorem}\label{Th:inhom}
If $\xi \in \Fqtinv \setminus \Fqt$ and $\alpha \notin \FqT + \FqT \xi$,
 then there exist inf{i}nitely many $Q$ in $\FqT$ such that
      \[
        \norm{Q\xi - \alpha} \leq \dfrac{1}{q^2 \abs{Q}}.
      \]
In addition, the factor $q^2$ is best possible. 
Namely, there exists $\xi$ in $\invT\FqTinv \setminus \Fqt$ for which 
\[
  \norm{Q\xi -  \invT ( 1 - \xi ) } \geq \dfrac{1}{q^2 \abs{Q}},
\]
whenever $Q \in \FqT \setminus \{ 0 \}$.  
\end{theorem}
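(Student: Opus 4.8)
The plan is to obtain both halves of Theorem~\ref{Th:inhom} from continued fractions over $\Fqtinv$ and the attached Ostrowski-type (mixed-radix) expansions. I would first collect, next to Lemma~\ref{L:legendre}, the standard facts for $\xi\in\Fqtinv\setminus\Fqt$ with convergents $P_k/Q_k$: one has $\deg Q_k=\deg a_1+\dots+\deg a_k$, $Q_kP_{k-1}-Q_{k-1}P_k\in\F_q^{\times}$, and $\norm{Q_k\xi}=\abs{Q_k\xi-P_k}=q^{-\deg Q_{k+1}}$; every $Q\in\FqT$ has a unique expansion $Q=\sum_kc_kQ_k$ with $\deg c_k<\deg a_{k+1}$; and, because the elements $Q_k\xi-P_k$ have strictly decreasing norms whose ranges tile $\{q^{-1},q^{-2},\dots\}$, every element of $\invT\FqTinv$ has a unique expansion $\sum_{k\ge0}b_k(Q_k\xi-P_k)$ with $\deg b_k<\deg a_{k+1}$, which belongs to $\FqT+\FqT\xi$ exactly when finitely many $b_k$ are non-zero.

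\emph{Existence.} Replacing $\alpha$ by $\braket{\alpha}$, assume $\alpha\in\invT\FqTinv$ and write $\alpha=\sum_{k\ge0}b_k(Q_k\xi-P_k)$ as above; by hypothesis infinitely many $b_k$ are non-zero. For such an index $N$ set $Q^{(N)}=\sum_{k=0}^{N}b_kQ_k$; its leading term is $b_NQ_N$, so $\deg Q^{(N)}=\deg b_N+\deg Q_N$, while $Q^{(N)}\xi-\alpha\equiv-\sum_{k>N}b_k(Q_k\xi-P_k)\pmod{\FqT}$. The non-zero terms $b_k(Q_k\xi-P_k)$ have pairwise distinct norms $q^{\deg b_k-\deg Q_{k+1}}$, so if $m$ is the least index $>N$ with $b_m\ne0$ the ultrametric inequality gives $\norm{Q^{(N)}\xi-\alpha}=q^{\deg b_m-\deg Q_{m+1}}$. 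Then $\norm{Q^{(N)}\xi-\alpha}\le q^{-2-\deg Q^{(N)}}$ reduces to $\deg b_N+\deg b_m\le\deg a_{N+1}+\dots+\deg a_{m+1}-2$, which is immediate from $\deg b_N\le\deg a_{N+1}-1$, $\deg b_m\le\deg a_{m+1}-1$ and $\deg a_j\ge1$. Since $N$ runs over an infinite set and $\deg Q^{(N)}\ge\deg Q_N\to\infty$, this produces infinitely many admissible $Q$.

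\emph{Optimality.} Let $\xi\in\invT\FqTinv$ be the root of $X^2+TX-1$, i.e.\ $\xi=[0;\overline{T}]$; its continued fraction is infinite, hence $\xi\notin\Fqt$, and all $\deg a_k=1$, so the Ostrowski digits above are constants. Using $T\xi=1-\xi^2$ and the recurrences for $P_k,Q_k$ one proves by induction that $Q_k\xi-P_k=(-1)^k\xi^{k+1}$ for every $k\ge0$, whence
\[
  \invT(1-\xi)=\frac{\xi}{1+\xi}=\sum_{k\ge0}(-1)^k\xi^{k+1}=\sum_{k\ge0}(Q_k\xi-P_k),
\]
so $\alpha:=\invT(1-\xi)$ has all Ostrowski digits equal to $1$ (in particular $\alpha\notin\FqT+\FqT\xi$). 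Now fix $Q\ne0$ and write $Q=\sum_{k=0}^{N}c_kQ_k$ with $c_N\ne0$, so $\deg Q=N$; then
\[
  Q\xi-\alpha\equiv\sum_{k=0}^{N}(c_k-1)(Q_k\xi-P_k)-\sum_{k>N}(Q_k\xi-P_k)\pmod{\FqT}.
\]
Again the non-zero terms have pairwise distinct norms, so no cancellation occurs and $\norm{Q\xi-\alpha}=q^{-(j+1)}$, where $j$ is the smallest index carrying a non-zero coefficient. But $j\le N+1$: either $c_k\ne1$ for some $k\le N$, or $c_k=1$ for all $k\le N$ and then $j=N+1$. Hence $\norm{Q\xi-\alpha}\ge q^{-(N+2)}=q^{-2-\deg Q}$, completing the proof.

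The existence half is routine once the Ostrowski formalism is available; the substance is the optimality half, and the crux is to verify that for this particular pair $(\xi,\alpha)$ \emph{every} Ostrowski digit of $\alpha$ is non-zero, which is precisely what forces $\norm{Q\xi-\alpha}$ up to the extremal value $q^{-2-\deg Q}$ for \emph{all} $Q$, not merely for the best-approximation denominators. Structurally, the clean factor $q^2$ (in place of the awkward archimedean $1/4$) comes directly from the non-archimedean ``no cancellation'' identities used above, so the one thing to get exactly right is the bookkeeping of exponents together with the unit of slack from $\deg a_j\ge1$. An alternative, Ostrowski-free route to optimality is to observe that $\norm{Q\xi-\alpha}<q^{-2-\deg Q}$ would force $\abs{(QT+1)\xi-(PT+1)}<q^{-\deg(QT+1)}$, i.e.\ $QT+1$ would be a scalar multiple of a convergent denominator of $\xi$ with $[(QT+1)\xi]\equiv1\pmod T$, which is excluded by computing $P_k$ and $Q_k$ modulo $T$ for $\xi=[0;\overline{T}]$.
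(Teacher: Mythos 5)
Your proposal is correct, and it takes a genuinely different route from the paper on both halves. For existence, the paper does not argue directly with $\xi$ and $\alpha$: it first proves a Cassels-type transference lemma (Lemma~\ref{L:Cas}) and combines it with the homogeneous Minkowski linear-forms theorem to obtain a general two-form inhomogeneous statement, $\abs{L_1(\underline{Q})+\rho_1}\abs{L_2(\underline{Q})+\rho_2}\leq q^{-2}\Delta$ (Theorem~\ref{Th:minkowski}), of which the first half of Theorem~\ref{Th:inhom} is the specialization $L_1(\underline{Q})=\xi Q_1+Q_2$, $L_2(\underline{Q})=Q_1$. Your Ostrowski-type expansion of $\braket{\alpha}$ in the basis $\eps_k=Q_k\xi-P_k$ replaces all of that machinery with a direct construction of the approximating $Q^{(N)}=\sum_{k\le N}b_kQ_k$, and the exponent bookkeeping ($\deg b_N\le\deg a_{N+1}-1$, $\deg b_m\le\deg a_{m+1}-1$) checks out; the trade-off is that you lose the general linear-forms statement, which the paper also needs for its own sake. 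For optimality, the paper works with any $\xi=[A_1,A_2,\dots]$ with $T\mid A_i$ for all $i$, supposes $\abs{TQ}\abs{(TQ+1)\xi-(TP+1)}<1$, locates $\abs{TQ+1}$ between $\abs{Q_nQ_{n-1}}^{1/2}$ and $\abs{Q_{n+1}Q_n}^{1/2}$, and expands $(TP+1,TQ+1)$ in the convergent basis to reach a contradiction; your argument is the additive counterpart of this for the single quadratic $\xi=[\,\overline{T}\,]$, made fully explicit by the identities $\eps_k=(-1)^k\xi^{k+1}$ and $\invT(1-\xi)=\sum_k\eps_k$ (both of which I checked), followed by a digit-by-digit comparison that is cleaner than the paper's computation. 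The only items you invoke without proof are the existence, uniqueness, and finiteness characterization of the $\eps_k$-expansion on $\invT\FqTinv$; these are true (the digit $b_k$ is obtained by solving an invertible triangular system against the leading coefficients of $\eps_k$, and the norm ranges $q^{-\deg Q_{k+1}},\dots,q^{-\deg Q_k-1}$ tile $\{q^{-1},q^{-2},\dots\}$), but they are not in the paper, so a complete write-up must include that short argument.
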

The second part of the above theorem is the power series analogue of Theorem IIA of ~\citep[pp.~48--51]{Cas57}. A simple corollary is that the set $\{ \braket{Q\xi} \mid Q \in \FqT \}$ is dense in $\invT\FqTinv$.
This is also true for approximat{i}ng any vector $\underline{\xi} \in \Fqtinv^m$ by elements
belonging to the subgroup $A\FqT^{n} + \FqT^{m}$ for generic $m \times n$ matrices $A$ and follows from~\cite[Theorem~1.1]{BZ19}. On a slightly di{f}ferent note, we can show that density in the one-dimensional unit ball is achieved by only taking fract{i}onal parts of monic polynomial mult{i}ples of $\xi$ as opposed to all polynomial mult{i}ples.
  \begin{prop}\label{P:monic}
    Let $\xi \in \Fqtinv \setminus \Fqt$ and $\alpha$ be arbitrary. Then, there exist inf{i}nitely many monic polynomials $Q$ such that
    \[
      \abs{Q\xi - \alpha - P} \leq \frac{1}{q\abs{Q}}
    \]
    for some $P$ in \FqT.
  \end{prop}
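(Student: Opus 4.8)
\emph{Plan.} The plan is to run a counting argument over monic polynomials of a degree dictated by the continued fraction expansion of $\xi$. Fix $\xi \in \Fqtinv \setminus \Fqt$ and let $Q_{1}, Q_{2}, \dots$ be the denominators of its convergents, normalised to be monic, with $d_{k} := \deg Q_{k}$, so that $1 \le d_{1} < d_{2} < \cdots$. The only nontrivial input I will need from continued fraction theory over $\Fqtinv$ — contained in, or readily deduced from, Lemma~\ref{L:legendre} — is the best--approximation estimate
\[
  \norm{R\xi} \ \ge\ q^{-d_{k}} \qquad\text{for every } R \in \FqT\setminus\{0\} \text{ with } \deg R < d_{k},
\]
with equality for $R = Q_{k-1}$ (where $\norm{Q_{k-1}\xi} = q^{-d_{k}}$).

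Fix $k$ and set $N := d_{k}$. First I would consider the set $\mathcal{M}_{N}$ of monic polynomials of degree exactly $N$; it has $q^{N}$ elements and is a coset of the $N$-dimensional $\F_q$-vector space $W_{N} := \{R\in\FqT : \deg R < N\}$. Then I introduce the map
\[
  \Phi\colon \mathcal{M}_{N}\longrightarrow \F_q^{\,N},\qquad
  Q\longmapsto \bigl(c_{1},\dots,c_{N}\bigr)\ \text{ with }\ \braket{Q\xi-\alpha} = \sum_{i\ge 1} c_{i}\,T^{-i}.
\]
Since $x\mapsto\braket{x}$ is additive on $\Fqtinv$ (as $\invT\FqTinv$ is a subgroup), $\Phi$ is an affine map of $Q$, its linear part being $L\colon W_{N}\to\F_q^{\,N}$, $R\mapsto(c_{1},\dots,c_{N})$ with $\braket{R\xi}=\sum c_{i}T^{-i}$. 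The displayed inequality says exactly that a nonzero $R\in W_{N}$ has $\braket{R\xi}$ nonzero in at least one of its first $N$ coordinates, i.e.\ $\ker L = 0$; as $\dim_{\F_q} W_{N} = N = \dim_{\F_q}\F_q^{\,N}$, the map $L$, and hence $\Phi$, is a bijection. Therefore there is a (unique) $Q\in\mathcal{M}_{N}$ with $\Phi(Q) = 0$, and this $Q$ satisfies $\norm{Q\xi-\alpha}\le q^{-N-1} = \frac{1}{q\abs{Q}}$; with $P := [\,Q\xi-\alpha\,]\in\FqT$ this reads $\abs{Q\xi-\alpha-P}\le\frac{1}{q\abs{Q}}$.

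Letting $k$ range over the positive integers produces such a monic $Q$ of degree $d_{k}$ for each $k$; since the $d_{k}$ are pairwise distinct so are these polynomials, giving infinitely many, as required. The step needing genuine care — and the one I expect to be the main obstacle — is the injectivity of $L$: it is precisely here that $N$ must be taken among the $d_{k}$, for if $d_{j} < N < d_{j+1}$ then $Q_{j}\in W_{N}$ and $\norm{Q_{j}\xi} = q^{-d_{j+1}} < q^{-N}$, so $Q_{j}\in\ker L$ and the count collapses. Thus everything reduces to extracting from Lemma~\ref{L:legendre} (or standard continued fraction theory over $\Fqtinv$) the sharp fact that no nonzero polynomial of degree $<d_{k}$ approximates $\xi$ to within $q^{-d_{k}}$, together with the obvious fact that an irrational $\xi\notin\Fqt$ has infinitely many convergents.
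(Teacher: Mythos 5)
Your proof is correct, but it proceeds by a genuinely different route from the paper's. The paper runs Kronecker's classical argument: it takes a convergent $R/S$ to $\xi$ with $S$ monic and $\gcd(R,S)=1$, solves $[S\alpha]=RP_1+SP_2$ with $\abs{P_1}<\abs{S}$ via B\'ezout in $\F_q[T]$, and checks that the perturbation $Q=S+P_1$ --- still monic and of the same degree --- satisfies the required inequality. You instead observe that $R\mapsto\braket{R\xi}$ is $\F_q$-linear, so that truncation to the first $N=\deg Q_k$ coefficients gives an affine map from the $q^N$ monic polynomials of degree $N$ to $\F_q^N$, and that its injectivity (hence bijectivity, by a dimension count) is precisely the best-approximation property $\norm{R\xi}\geq\abs{Q_k}^{-1}$ for $0<\abs{R}<\abs{Q_k}$, which the paper records in the discussion immediately following Lemma~\ref{L:legendre}; hitting the zero vector then produces the desired $Q$. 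Both arguments take continued fractions as the essential input and both output a monic solution of degree exactly $\deg Q_k$ for every $k$; yours additionally yields uniqueness of that solution in each degree $d_k$ and replaces the B\'ezout manipulation by linear algebra over $\F_q$, while the paper's version is more explicit about which $Q$ it constructs. Your identification of the injectivity of $L$ as the crux --- and of why $N$ must be taken among the convergent degrees $d_k$ --- is exactly right, and that input is available verbatim in the paper's Section~\ref{S:cf}.
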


Following \citep{BL05,BZ19},
we introduce several exponents of homogeneous and inhomogeneous Diophantine approximation.
Let $n$ and $m$ be positive integers and $A$ a matrix in $\mathcal{M}_{n,m}(\Fqtinv)$.
Let $\underline{\theta}$ be in $\Fqtinv^n$.
We denote by $\omega(A,\underline{\theta})$ the supremum of the real numbers $\omega$ for which,
for arbitrarily large real numbers $H$, the inequalities
\begin{equation}
\label{exponent}
\norm{ A\underline{x}-\underline{\theta}}\leq H^{-\omega}\;\;\;
\text{and }\;\;\;\abs{\underline{x}}\leq H
\end{equation}
have a solution $\underline{x}$ in $\mathbb{F}_q[T]^m$.
Let $\hatomega{A,\underline{\theta}}$ be the supremum of the real numbers $\omega$ for which,
for all sufficiently large positive real numbers $H$,
the inequalities (\ref{exponent}) have a solution $\underline{x}$ in $\mathbb{F}_q[T]^m$.
The homogeneous exponents $\omega (A)$ and $\hatomega{A}$ 
are similarly def{i}ned by taking $\thetabar = \zero$
and disallowing $\underline{Q}$ to be so.
It is then clear that for any pair $A, \thetabar$,
we have $\omega ( A, \thetabar ) \geq \hatomega{A, \thetabar} \geq 0$ and $\omega (A) \geq \hatomega{A} \geq 0$.

In language of exponent defined above,
we conclude from Theorem \ref{Th:inhom} that $\omega ( \xi, \alpha ) \geq 1$ for any irrat{i}onal $\xi$,
and that $\omega ( \xi, \alpha )=1$ for some irrat{i}onal $\xi$ and $\alpha$.
We further have
    \begin{equation}\label{E:n1}
      \omega (\thetabar) = \hatomega{\thetabar} = 1/m \textrm{ for almost every } \thetabar \in \Fqtinv^m, 
    \end{equation}
(with respect to the Haar measure) by the Borel-Cantelli lemma.

There is a lot of recent act{i}vity about understanding the Diophant{i}ne propert{i}es of group act{i}ons
on homogeneous spaces.
If we consider Theorem \ref{Th:inhom} as the study of the action of $\FqT$ on $\Fqtinv $,
Our next goal is to obtain an analogous result for the standard act{i}on of \SLtwoFqT on $\Fqtinv^2$.

\citeauthor*{GGN3}~\cite{GGN3, GGN15} have studied the generic rate of approximat{i}on by lat{t}ice orbits
for a large class of lat{t}{i}ce act{i}ons on homogeneous variet{i}es of connected almost simple, semisimple algebraic groups.
\citeauthor{LN12}~\cite{LN12} conf{i}ned their invest{i}gat{i}ons to the standard linear act{i}on
of the lat{t}ice $\mathrm{SL} ( 2, \Z )$ on the punctured plane $\R^2 \setminus \{ \zero \}$.
In a previous work~\citep{Sin17},
the second-named author extended their approach and showed similar results 
for a few lat{t}ices inside $\mathrm{SL} ( 2, \C )$ act{i}ng linearly on $\C^2 \setminus \{ \zero \}$.
The last two approaches involve making use of some cont{i}nued fract{i}on algorithm
to construct certain \emph{convergent matrices} belonging to the relevant lat{t}ice.
An alternate strategy deployed in~\cite{MW12} and \cite{Pol11} works for many more examples
as it uses ef{f}ect{i}ve equidistribut{i}on results but usually gives weaker est{i}mates.

Let $\x = ( x_1, x_2 ) \in \Fqtinv^2$ with ``slope'' $\xi := x_1 / x_2$ in $\Fqtinv$.
We consider its orbit under the standard act{i}on of \SLtwoFqT.
If the slope $\xi$ is in $\Fqt$ and $P / Q$ is its representat{i}on in `lowest terms'
for some $Q \in \FqT \setminus \{ 0 \}$,
the coordinates of any non-zero vector $\gamma\x$ shall have entries with absolute value
at least $\min \{ \abs{x_2}, 1 \}\cdot\abs{Q}^{-1}$.
Similarly, two dist{i}nct points in the \SLtwoFqT-orbit of \x\ will also be
at least $\min \{ \abs{x_2}, 1 \}\cdot\abs{Q}^{-1}$ apart and we will have a discrete orbit at hand.
We are more interested in analyzing the nature of dense orbits here.
Therefore, we assume henceforth that $\xi \in \Fqtinv \setminus \Fqt$.
Our target is to reach as close to some f{i}xed point $\y \in \Fqtinv^2$ as possible
with the help of smallest matrix size \abs{\gamma} for some $\gamma \in \SLtwoFqT$.
Just like Def{i}nit{i}on of above exponent, we have
\begin{Def}\label{D:mu}
The \emph{asymptot{i}c Diophant{i}ne exponent} $\mu ( \x, \y )$ refers to
\begin{equation}
\sup \left\{ \mu \mid \abs{\gamma \x - \y} \leq \abs{\gamma}^{-\mu} 
\textrm{ has inf.\ many solut{i}ons in } 
\gamma \in \SLtwoFqT \right\},
\end{equation}
and the \emph{uniform Diophant{i}ne exponent} \hatmu {\x}{\y} is given by
        \begin{equation}
          \sup \left\{ \hat{\mu} \mid \forall H \gg 0, \exists \gamma \in \SLtwoFqT \textrm{ such that } \abs{\gamma} \leq H, \abs{\gamma \x - \y} \leq H^{- \hat{\mu} } \right\}.
        \end{equation}
\end{Def}

Here we give the analogue of the results in \citeauthor{LN12}~\cite{LN12}.
\begin{theorem}\label{T: exponent for lattice}
      \begin{itemize}
      Let $\x = \tp{( x_1, x_2 )} \in \Fqtinv^2$ such that $\xi = x_1 / x_2$ is not in $K$.
            \item [(1)]
      We have
          \[
            \mu ( \x, \zero ) = 1\quad\text{and}\quad \hatmu{\x}{\zero} = 1 / \omega (\xi).
          \]
      \item [(2)]
      If the vector $\y = \tp{( y_1, y_2 )} \in \Fqtinv^2$ has slope $y = y_1 / y_2$ in $\Fqt$, then we have
        \[
          \mu ( \x, \y ) = \frac{\omega (\xi)}{\omega (\xi ) + 1}\quad\text{and}\quad\hatmu{\x}{\y} = \frac{1}{\omega (\xi) + 1}.
        \]
      \item [(3)]
       If the slope $y$ of the vector $\y$ is not in $K$, then we have 
        \[
          \mu ( \x, \y )\geq\frac{1}{3}\quad\text{and}\quad\hatmu{\x}{\y}
          \geq \frac{\omega(y)+1}{2(2\omega(y)+1)\omega (\xi)}\geq \frac{1}{4\omega(\xi)}.
        \]
       \end{itemize}
\end{theorem}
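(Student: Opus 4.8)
The plan is to phrase everything through the continued fraction expansion of the slope $\xi$, whose convergents $p_j/q_j$ satisfy the exact identities $\norm{q_j\xi}=\abs{q_{j+1}}^{-1}$ and $p_{j-1}q_j-p_jq_{j-1}\in\F_q^{\times}$, and for which $\omega(\xi)=\limsup_j(\deg q_{j+1})/(\deg q_j)$. Writing $\gamma\x=\tp{(x_2(a\xi+b),\,x_2(c\xi+d))}$ whenever $\gamma$ has rows $(a,b)$ and $(c,d)$, the determinant relation $a(c\xi+d)-c(a\xi+b)=ad-bc=1$ gives $\abs{\gamma\x}\ge\abs{\x}\,\abs{\gamma}^{-1}$ at once, hence $\mu(\x,\zero)\le 1$; taking for the rows of $\gamma$ two consecutive convergent vectors $(q_{j-1},-p_{j-1})$, $(q_j,-p_j)$, rescaled by an element of $\F_q^{\times}$ to make the determinant $1$, gives $\abs{\gamma}\asymp\abs{q_j}$ and $\abs{\gamma\x}\asymp\abs{x_2}\abs{q_j}^{-1}$, so $\mu(\x,\zero)=1$; replacing $(q_{j-1},-p_{j-1})$ by an appropriate semiconvergent vector realizes every height $H$ and yields $\hatmu{\x}{\zero}\ge\liminf_j(\deg q_j)/(\deg q_{j+1})=1/\omega(\xi)$. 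For the matching upper bound the key fact is that the only $(c,d)\in\FqT^2$ with $\abs{c}<\abs{q_{j+1}}$ and $\norm{c\xi}<\abs{q_j}^{-1}$ lie in the rank-one module $\FqT(q_j,-p_j)$; since the two rows of a determinant-one $\gamma$ are independent they cannot both approximate $\xi$ to within $\abs{q_j}^{-1}$ once $\abs{\gamma}<\abs{q_{j+1}}$, so $\abs{\gamma\x}\ge\abs{x_2}\abs{q_j}^{-1}$, and evaluating at $H$ just below $\abs{q_{j+1}}$ along a subsequence realizing $\omega(\xi)$ proves (1).

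For (2), conjugating by a fixed $\gamma_0\in\SLtwoFqT$ carrying $\tp{(1,0)}$ to the primitive vector in the direction of $\y$ (which distorts $\abs{\gamma}$ and $\abs{\gamma\x-\y}$ only by constants) reduces the problem to $\y=\tp{(y_0,0)}$: I must find $\gamma$ with $(c,d)$ approximating $\xi$, with $\abs{a\xi+b-\eta}$ small where $\eta:=y_0/x_2\ne 0$, and with $ad-bc=1$. The crucial point is that fixing $(c,d)=(q_m,-p_m)$ forces $(a,b)$ into a single coset $(a_0,b_0)+\FqT(q_m,-p_m)$; since $(q_m\xi-p_m)\FqT$ is a cocompact lattice of covolume $\abs{q_m\xi-p_m}=\abs{q_{m+1}}^{-1}$, one may choose $t$ with $\abs{a\xi+b-\eta}=\abs{t(q_m\xi-p_m)-v}<\abs{q_{m+1}}^{-1}$ ($v$ absorbing $\eta$ and the $(a_0,b_0)$-term) at the unavoidable cost $\abs{a}\asymp\abs{q_m}\abs{q_{m+1}}$. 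This gives $\abs{\gamma}\asymp\abs{q_m}\abs{q_{m+1}}$ and $\abs{\gamma\x-\y}\asymp\abs{x_2}\abs{q_{m+1}}^{-1}$, so $\mu(\x,\y)\ge\limsup_m\tfrac{\deg q_{m+1}}{\deg q_m+\deg q_{m+1}}=\tfrac{\omega(\xi)}{\omega(\xi)+1}$, and inserting semiconvergent rows at intermediate scales gives $\hatmu{\x}{\y}\ge\tfrac{1}{\omega(\xi)+1}$. For the upper bounds, feeding the smallness of both coordinates of $\gamma\x-\y$ into $a(c\xi+d)-c(a\xi+b-\eta)=1+c\eta$ forces (for $\gamma$ large, as $\eta\ne 0$) $\abs{c}\lesssim\abs{\gamma}^{1-\mu}$ and $\abs{c\xi+d}\lesssim\abs{\gamma}^{-\mu}$, hence a convergent $q_i$ with $\deg q_i\lesssim(1-\mu)\log_q\abs{\gamma}$ and $\deg q_{i+1}\gtrsim\mu\log_q\abs{\gamma}$; combining this with the lower bound $\abs{a}\gtrsim\abs{c}\abs{\eta}\,\abs{c\xi+d}^{-1}$ (again from the same identity) pins $\mu(\x,\y)$ at $\tfrac{\omega(\xi)}{\omega(\xi)+1}$, and evaluating at $H$ just below $\abs{q_{m+1}}\abs{q_{m+2}}$ along an extremal subsequence for $\omega(\xi)$ pins $\hatmu{\x}{\y}$ at $\tfrac{1}{\omega(\xi)+1}$.

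For (3), $y$ being irrational, I would approximate $\y$ by the rational-slope vectors $\y_k:=\tp{(y_2P_k/R_k,\,y_2)}$ built from the convergents $P_k/R_k$ of $y$, for which $\abs{\y-\y_k}\asymp\abs{R_k}^{-1}\abs{R_{k+1}}^{-1}$. Running the construction of (2) with target $\y_k$ — whose normalization introduces a conjugating matrix of size $\asymp\abs{R_k}$ and a shrinking target $\eta_k$ of size $\asymp\abs{R_k}^{-1}$ — produces, for each convergent $q_j$ of $\xi$ with $\deg q_{j+1}\gtrsim 2\deg R_k+\deg R_{k+1}$, a matrix $\gamma$ with $\abs{\gamma\x-\y}\asymp\abs{R_k}^{-1}\abs{R_{k+1}}^{-1}$ and $\abs{\gamma}\lesssim\abs{q_j}\abs{q_{j+1}}$. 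Taking $j$ minimal with this property (so $\deg q_j\lesssim 2\deg R_k+\deg R_{k+1}$ and $\deg q_{j+1}\lesssim\omega(\xi)(2\deg R_k+\deg R_{k+1})$) and then optimizing over $k$ — along a subsequence realizing $\omega(y)$, and accounting for the loss of order $\omega(\xi)$ incurred when $\log_q H$ falls between consecutive admissible sizes of $\gamma$ — yields $\mu(\x,\y)\ge 1/3$ and $\hatmu{\x}{\y}\ge\tfrac{\omega(y)+1}{2(2\omega(y)+1)\omega(\xi)}\ge\tfrac{1}{4\omega(\xi)}$.

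I expect the recurring obstacle to be the determinant constraint: the two rows of $\gamma$ cannot be prescribed independently, and the substance of (2) and (3) is that the row forced by $ad-bc=1$ can still be made to inhomogeneously approximate the (rescaled) target without $\abs{\gamma}$ exceeding $\abs{q_m}\abs{q_{m+1}}$ — this is exactly where the cocompactness of $(q_m\xi-p_m)\FqT$ and the careful bookkeeping of semiconvergents are needed. Dually, in the upper bounds the determinant identity is precisely what converts smallness of $\gamma\x-\y$ into two-sided control of a convergent $q_i$ of $\xi$, the delicate point being the borderline regime where $\deg q_i$ is comparable to $\mu\log_q\abs{\gamma}$, handled by the separate lower bound on $\abs{a}$.
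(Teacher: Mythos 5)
Parts (1) and (2) of your outline are essentially sound. For (1) you follow the same route as the paper: the determinant identity for the upper bound on $\mu(\x,\zero)$, the convergent matrices for the lower bounds, and the best-approximation property of convergents (the content of Lemma~\ref{L:lb}) for the upper bound on $\hatmu{\x}{\zero}$. For (2) your construction — fixing the bottom row to be a convergent vector of $\xi$ and solving for the top row inside its coset modulo $\FqT\,(q_m,-p_m)$ — produces exactly the matrices $NU(a)M_k$ of the paper's Lemma~\ref{L:rat}, while your upper-bound argument via the identity $a(c\xi+d)-c(a\xi+b-\eta)=1+c\eta$ combined with Legendre's theorem is a genuinely more direct route than the paper's Theorem~\ref{Th:gap} (which instead shows that $N^{-1}\gamma M_k^{-1}$ must be anti-triangular); it works because $\mu>\omega(\xi)/(\omega(\xi)+1)\geq 1/2$ guarantees $\abs{c}\cdot\abs{c\xi+d}<1$ for $\abs{\gamma}$ large.

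The genuine gap is in part (3). Your reduction to (2) via the rational points $\y_k$ forces the error to the target to be $\max\bigl\{\abs{R_kR_{k+1}}^{-1},\abs{R_k}\abs{q_{j+1}}^{-1}\bigr\}$, hence requires $\deg q_{j+1}\geq D:=2\deg R_k+\deg R_{k+1}$, while the matrix size is $\asymp\abs{q_jq_{j+1}}$. Even with $j$ minimal (so $\deg q_j<D$), the next denominator can satisfy $\deg q_{j+1}\approx\omega(\xi)\deg q_j$, so the exponent you extract is at most $(\deg R_k+\deg R_{k+1})/\bigl((1+\omega(\xi))D\bigr)\leq 1/(1+\omega(\xi))$, which drops below $1/3$ once $\omega(\xi)>2$ and tends to $0$ as $\omega(\xi)\to\infty$; so $\mu(\x,\y)\geq 1/3$ does not follow. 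The missing idea is that one must \emph{not} use the full approximation strength of the convergent when $\omega(\xi)$ is large: in the paper's Lemma~\ref{L:irrjk} the error to the rational-slope line is only $\abs{S_j/Q_k}$ (not $\abs{S_j/Q_{k+1}}$), but in exchange $\abs{\gamma}\leq\max\{\abs{S_jQ_k},\abs{y_2Q_kQ_{k-1}/x_2}\}$ involves the \emph{previous} denominator $Q_{k-1}$ and never $Q_{k+1}$. This is what permits the two-case analysis: for $\omega(\xi)<3$ one balances $\abs{S_j}\asymp\abs{Q_k}^{1/3}$, and for $\omega(\xi)>2$ one restricts to the infinitely many $k$ with $\abs{Q_{k-1}}\leq\abs{Q_k}^{1/2}$ and balances $\abs{S_j}\asymp\abs{Q_k}^{1/2}$, getting $\abs{\gamma}\lesssim\abs{Q_k}^{3/2}$ against an error $\lesssim\abs{Q_k}^{-1/2}$. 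The same defect propagates to your uniform bound: the $2\omega(\xi)$ in the denominator of $\tfrac{\omega(y)+1}{2(2\omega(y)+1)\omega(\xi)}$ comes from $\abs{\gamma}\ll\abs{Q_k}^2$ and the gap between consecutive values of $\abs{Q_k}^2$ (Lemma~\ref{L:irrlb}), and is unavailable when the size is $\abs{q_jq_{j+1}}$.
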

A generic upper bound for the asymptotic exponent $\mu(\underline{x},\underline{y})$ is given by the following theorem,
which is the analogue of Theorem 3 in \citeauthor{LN12}~\cite{LN12}.
\begin{theorem}
\label{T:generic upper bound}
Let $\underline{x}$ be a point in $\Fqtinv^2$ with irrational slope 
and let $y$ be an irrational element in $\Fqtinv$ having irrationality exponent $\omega(y)=1$.
Then we have 
$$
\mu(\underline{x},\underline{y})\leq \frac{1}{2}
$$
for almost all points $\underline{y}$ of the line $\Fqtinv ^t(\underline{y},1)$.
\end{theorem}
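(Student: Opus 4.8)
The plan is to establish the bound by a convergence Borel--Cantelli argument along the line $\L:=\Fqtinv\,\tp{(y,1)}$, which we parametrise by $\lambda\mapsto\lambda\,\tp{(y,1)}$ and equip with the corresponding Haar measure $\nu_\L$. It suffices to show that for every rational $\mu>1/2$ the set of $\y\in\L$ admitting infinitely many $\gamma\in\SLtwoFqT$ with $\abs{\gamma\x-\y}\le\abs{\gamma}^{-\mu}$ has $\nu_\L$-measure zero; since $\mu(\x,\y)\le\mu$ for all $\mu$ in the sequence $(1/2+1/k)_k$ then forces $\mu(\x,\y)\le1/2$, the theorem follows. Fix such a $\mu$ and work inside the compact piece $\L_0:=\{\lambda\,\tp{(y,1)}:\abs{\lambda}\le 1\}$ (a countable exhaustion by such pieces covers $\L$). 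For $\gamma=\left(\begin{smallmatrix}a&b\\c&d\end{smallmatrix}\right)\in\SLtwoFqT$ write $\gamma\x=\tp{(z_1,z_2)}$ and $E_\gamma:=\{\y\in\L_0:\abs{\gamma\x-\y}\le\abs{\gamma}^{-\mu}\}$; with $\xi=x_1/x_2$ one has $z_2=x_2(c\xi+d)$ and $z_1-yz_2=x_2\bigl((a-yc)\xi+(b-yd)\bigr)$. An elementary ultrametric estimate gives $\nu_\L(E_\gamma)\asymp\abs{\gamma}^{-\mu}$ when $\abs{z_2}\ll1$ and $\abs{z_1-yz_2}\ll\abs{\gamma}^{-\mu}$, and $\nu_\L(E_\gamma)=0$ otherwise. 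Hence
\[
\sum_{\gamma\in\SLtwoFqT}\nu_\L(E_\gamma)\ \asymp\ \sum_{n\ge 1}q^{-n\mu}\,N\bigl(n,q^{-n\mu}\bigr),\qquad N(n,\delta):=\#\bigl\{\gamma\in\SLtwoFqT:\abs{\gamma}=q^n,\ \abs{z_2}\ll1,\ \abs{z_1-yz_2}\ll\delta\bigr\},
\]
and the theorem reduces to proving $N(n,\delta)\ll_{\eps}q^{\eps n}\max\{1,\delta q^n\}$, which makes the series converge exactly because $\mu>1/2$ (one gets $\sum_n q^{\eps n}\max\{q^{-n\mu},q^{n(1-2\mu)}\}<\infty$).

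For the counting estimate, observe first that a matrix counted by $N(n,\delta)$ is nearly determined by its bottom row $\tp{(c,d)}$: two such matrices with the same bottom row differ by a polynomial shift $t$ with $\gamma'\cdot\xi=\gamma\cdot\xi-t$ and $z_2'=z_2$, and $\abs{z_1'-yz_2'}\ll\delta$ together with $\abs{z_1-yz_2}\ll\delta$ forces $\abs{t}\ll\delta/\abs{z_2}$. I therefore stratify the count by the height $\abs{z_2}=q^{-K}$ (equivalently by $\abs{c\xi+d}$). At scale $K$ the admissible bottom rows are of two types: \emph{generic} ones, for which $d$ equals $-[c\xi]$ plus a polynomial of the prescribed size, and \emph{convergent} ones, for which $\abs{c\xi+d}=\norm{c\xi}$ is forced to be small and $c$ must be (a multiple of) a convergent denominator of $\xi$. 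Using the identity $\gamma_0\cdot\xi=a_0/c-1/\bigl(c(c\xi+d)\bigr)$ for the reduced representative $\left(\begin{smallmatrix}a_0&b_0\\c&d\end{smallmatrix}\right)\in\SLtwoFqT$, one finds that a generic bottom row at scale $K$ supports an admissible matrix only if $\langle y\rangle$ lies within $\ll\max\{\delta q^{K},q^{-\deg c}\}$ of $\langle\,\overline d/c\,\rangle$, where $\overline d$ is the inverse of $d$ modulo $c$; and, in the regime $\delta q^K\ge1$, it supports at most $\ll\delta q^K$ of them. The number of generic bottom rows at scale $K$ of size $\le q^n$ is $\ll q^{n-K}$, so the crux is the anti-clustering statement that, for $\delta q^K<1$, the fractions $\overline d/c$ arising this way approach $\langle y\rangle$ for only a proportion $\ll\delta$ of the bottom rows. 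This is precisely where the hypothesis $\omega(y)=1$ is used: because $y$ is badly approximable, $\langle y\rangle$ admits no anomalously good rational approximations, so there is no abnormal accumulation of orbit points near $\L$ --- and it is this that pins the exponent at the sharp value $1/2$ rather than a weaker value depending on $\omega(y)$.

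Putting the per-scale bounds together, the generic bottom rows contribute
\[
\sum_{K:\,\delta q^K<1}\delta\cdot q^{n-K}\ +\ \sum_{K:\,\delta q^K\ge1}\delta q^K\cdot q^{n-K}\ \ll\ n\,\delta q^n,
\]
which is of the required size up to the harmless factor $n$ (itself absorbed by $q^{\eps n}$). The convergent bottom rows are handled by the trivial observation that a fixed bottom row of size $q^\ell$ admits at most $q^{n-\ell}$ shifts keeping $\abs{\gamma}\le q^n$, combined with the fact that for large $n$ the associated approximation condition $\norm{y-\gamma_0\cdot\xi}\ll\delta q^K$ can be met by only finitely many convergents of $\xi$ unless $K$ is comparable to $n\mu$; summing $q^{n-\deg q_k}$ over that restricted range of convergents again yields $\ll q^{\eps n}\,\delta q^n$. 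Feeding $N(n,q^{-n\mu})\ll q^{\eps n}\max\{1,q^{n(1-\mu)}\}$ into the series and applying Borel--Cantelli completes the proof. The genuinely delicate ingredient is the anti-clustering estimate for the fractions $\overline d/c$ near $\langle y\rangle$ --- turning the badness of $y$ into a quantitative upper bound for the number of admissible bottom rows at each scale --- together with the uniform bookkeeping required to control the convergent-height contributions without any hypothesis on the Diophantine type of $\xi$.
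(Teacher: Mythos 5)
Your global framework --- parametrising the line, estimating $\nu_{\L}(E_\gamma)\asymp\abs{\gamma}^{-\mu}$, reducing to the counting bound $N(n,q^{-n\mu})\ll_{\eps}q^{\eps n}\max\{1,q^{n(1-\mu)}\}$, and closing with convergence Borel--Cantelli --- is sound and matches the shape of the paper's argument (which also ends by summing $\ll H^{1-2\mu+2\eps}$ over dyadic-type scales $H=q^n\abs{Q_{k-1}Q_k}$ and invoking Borel--Cantelli). The gap is in the counting step, which is the entire content of the theorem. Your ``anti-clustering'' claim --- that for $\delta q^K<1$ the fractions $\overline d/c$ attached to admissible bottom rows approach $\braket{y}$ for only a proportion $\ll\delta$ of those rows --- is asserted, not proved, and the justification you offer does not support it. First, $\omega(y)=1$ does \emph{not} mean $y$ is badly approximable: in $\Fqtinv$ bad approximability means bounded partial quotients, whereas $\omega(y)=1$ only gives $\abs{S_{j}}\leq\abs{S_{j-1}}^{1+\eps}$ for $j$ large, which still allows unbounded partial quotients. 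Second, and more seriously, what you need is a quantitative spacing/equidistribution statement for the modular inverses $\overline d/c$ as $(c,d)$ ranges over bottom rows at a fixed scale --- the function-field analogue of a Kloosterman-sum equidistribution input --- and no hypothesis on the Diophantine type of the single point $y$ can produce such a statement about the ambient family of fractions. As written, the ``genuinely delicate ingredient'' you identify at the end is exactly the part that is missing.

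For comparison, the paper avoids this issue entirely. It factors any admissible $\gamma$ as $N_jGM_k$, where $M_k$ is a convergent matrix for $\xi$ chosen by $\abs{Q_{k-1}Q_k}\leq H\leq\abs{Q_kQ_{k+1}}$ and $N_j$ is a convergent matrix for $y$ with $\abs{S_j}$ just above $H^{\mu/2}$; Lemma~\ref{L:G} then bounds the two columns of $G$ by $\ll\abs{S_j/Q_k}H^{1-\mu}$ and $\ll\abs{S_jQ_k}H^{-\mu}$, and a clean counting proposition (the number of $\SLtwoFqT$ matrices with columns bounded by $B_1,B_2$ is $\mathcal{O}(B_1B_2)$, proved via $\sum_{\deg f=i}\Phi(f)$) gives $\mathcal{O}(H^{1-\mu+2\eps})$ admissible $\gamma$ per scale. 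The hypothesis $\omega(y)=1$ enters only to guarantee that a denominator $S_j$ of $y$ exists at essentially the prescribed scale $H^{\mu/2}$ (i.e.\ $H^{\mu/2}\leq\abs{S_j}<H^{\mu/2+\eps}$); no equidistribution of orbit points near the line is needed. If you want to salvage your route, you would either have to prove the anti-clustering estimate (a substantial independent result), or replace it with a structural decomposition of the admissible bottom rows in terms of the convergents of $y$ and $\xi$ --- which is precisely what the $N_jGM_k$ factorisation accomplishes.
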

In the next section, we present some auxiliary results of continued fraction. In section 3, the analogue of the Minkowski's theorem is proved. The last section is devoted to giving the proof of Theorem \ref{T: exponent for lattice} and Theorem \ref{T:generic upper bound}.
%%%%%%%%%%%%%%%%%%%%%%%%%%%%%%%%%%%%%%%%%%%%%%%%%%%%%%%%%%%%%%%%%%%%%%%%%%%%%%%%%%%%%%%%%%%%%%%%%%%%%%%%%%%%%%%%%%%%%%%%%%%
%%%%%%%%%%%%%%%%%%%%%%%%%%%%%%%%%%%%%%%%%%%%%%%%%%%%%%%%%%%%%%%%%%%%%%%%%%%%%%%%%%%%%%%%%%%%%%%%%%%%%%%%%%%%%%%%%%%%%%%%%%%
\section{\protect{Cont{i}nued fract{i}ons}}\label{S:cf}
Let $\{ A_i \}_{i \in \N} \subset \FqT$ with $\deg A_i > 0$ for $i > 0$. 
The \emph{cont{i}nued fract{i}on} $\xi := [ A_0 ; A_1, \cdots ]$ is the limit of the sequence of part{i}al fract{i}ons
    \begin{equation}\label{E:cf}
      \dfrac{P_n}{Q_n} := A_0 + \cfrac{1}{A_1 + \cfrac{1}{A_2 + \cfrac{1}{\ddots + \frac{1}{A_n}}}}
    \end{equation}
as $n \rightarrow \infty$. It exists for every such sequence of \emph{part{i}al quot{i}ents} $A_i$'s and moreover
    \begin{equation}\label{E:cferror}
      \abs{Q_n \xi - P_n} = \dfrac{1}{\abs{Q_{n + 1}}}.
    \end{equation}
Conversely, given $\xi \in \Fqtinv$, such an expansion is unique. 
The rat{i}onal funct{i}ons $P_n / Q_n$ are called \emph{$n$-th order convergents} to $\xi$. 
They sat{i}sfy the recurrence
    \begin{align}\label{E:Qpair}
       P_{-2} := 0,\quad &P_{-1} := 1,\quad&P_n &= A_n P_{n - 1} + P_{n - 2} \textrm{ for } n \geq 0, \textrm{ and }\\
       Q_{-2} = 1,\quad &Q_{-1} = 0,\quad&Q_n &= A_n Q_{n - 1} + Q_{n - 2} \textrm{ for } n \geq 0.\notag
    \end{align}
Every f{i}nite expression $[ A_0; A_1, \cdots, A_N ]$ leads to an element of \Fqt. 
In the converse direct{i}on, 
it is also true that the cont{i}nued fract{i}on expansion of every rat{i}onal funct{i}on $P / Q$ terminates 
in f{i}nitely many steps because the Euclidean nature of the ring \FqT. 
By induct{i}on, $Q_n P_{n - 1} - P_n Q_{n - 1} = ( -1 )^n$ and $\abs{P_n} > \abs{P_{n - 1}}$ 
whenever all the terms are def{i}ned. 
As a consequence, $\abs{P_n} = \abs{A_n P_{n - 1}} = \prod_{i = 0}^n \abs{A_i}$. 
One can use a similar argument to establish
    \begin{equation}\label{E:absQn}
      \abs{Q_n} = \abs{A_n Q_{n - 1}} = \prod_{i = 1}^n \abs{A_i}.
    \end{equation}
For future use, 
it follows from~\eqref{E:cferror} and \eqref{E:absQn} that 
\[
  \abs{Q_n ( Q_n \xi - P_n )} = 1 / \abs{A_{n + 1}}.
\]

    \begin{notation}\label{N:sc}
      When $A_0 = 0$ as will of{t}en be the case, 
      we simply drop its ment{i}on along with the succeeding semi-colon and write 
      $[ A_1, A_2, \cdots ] := [ 0; A_1, A_2, \cdots ]$ 
      while $[A_1, A_2, \cdots, A_n ]$ shall denote the corresponding f{i}nite truncat{i}ons.
    \end{notation}
It is well known that for every $\xi \in \Fqtinv$, 
there exist inf{i}nitely many polynomials $Q$ such that $\abs{Q \xi - P} < 1 / \abs{Q}$ 
    for some $P\in \FqT$ depending upon $Q$. 
The next statement from~\citep{Sch00} tells us where to look for them.
    \begin{lem}[\citeauthor{Sch00}]\label{L:legendre}
      For any $Q \in \FqT \setminus \{ 0 \}$ such that $\abs{Q \xi - P} < 1/\abs{Q}$, 
      the rat{i}onal funct{i}on $P/Q$ is a convergent to $\xi$.
    \end{lem}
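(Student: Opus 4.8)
The plan is to carry over to the ultrametric setting the classical argument behind Legendre's theorem. Fix $Q \in \FqT \setminus \{0\}$ and $P \in \FqT$ with $\abs{Q\xi - P} < 1/\abs{Q}$; we must exhibit an index $n$ with $P/Q = P_n/Q_n$ as rational functions. By~\eqref{E:absQn} and the standing hypothesis $\deg A_i > 0$ for $i \geq 1$, the quantities $\abs{Q_n}$ form a strictly increasing sequence $1 = \abs{Q_0} < \abs{Q_1} < \abs{Q_2} < \cdots$ tending to infinity. (When $\xi$ is rational the expansion~\eqref{E:cf} terminates at some $P_N/Q_N = \xi$ in lowest terms; if $Q\xi - P = 0$ then $P/Q = \xi = P_N/Q_N$ and we are done, while if $Q\xi - P \neq 0$ then $\abs{Q\xi - P} = \abs{QP_N - PQ_N}/\abs{Q_N} \geq 1/\abs{Q_N}$, so the hypothesis forces $\abs{Q} < \abs{Q_N}$ and the argument below applies with $n < N$.) Thus there is a unique $n \geq 0$ with $\abs{Q_n} \leq \abs{Q} < \abs{Q_{n+1}}$.

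Next I would change basis from $\{1, \xi\}$ to the convergent data. Since $P_n Q_{n-1} - P_{n-1} Q_n = (-1)^{n+1}$ is a unit of \FqT, solving the linear system for $r$ and $s$ produces polynomials $r := (-1)^{n+1}(Q_{n-1}P - P_{n-1}Q)$ and $s := (-1)^{n+1}(P_n Q - Q_n P)$ in \FqT\ with $P = r P_n + s P_{n-1}$ and $Q = r Q_n + s Q_{n-1}$; here~\eqref{E:Qpair} also covers the boundary $n = 0$, where $Q_{-1} = 0$ and $P_{-1} = 1$. It now suffices to prove $s = 0$, for then $P = r P_n$, $Q = r Q_n$, and $P/Q = P_n/Q_n$ is a convergent to $\xi$.

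Suppose, for contradiction, that $s \neq 0$, so $\abs{s} \geq 1$. Writing $Q\xi - P = r(Q_n \xi - P_n) + s(Q_{n-1}\xi - P_{n-1})$ and invoking~\eqref{E:cferror} (together with $\abs{Q_{-1}\xi - P_{-1}} = 1 = 1/\abs{Q_0}$ in the boundary case), the two summands have absolute values $\abs{r}/\abs{Q_{n+1}}$ and $\abs{s}/\abs{Q_n}$. A short case distinction on which of $\abs{r Q_n}$ and $\abs{s Q_{n-1}}$ is the larger, fed with $Q = r Q_n + s Q_{n-1}$ and with the inequality $\abs{Q} < \abs{Q_{n+1}} = \abs{A_{n+1}}\abs{Q_n}$ coming from~\eqref{E:absQn}, shows in each case that $\abs{r}/\abs{s} < \abs{A_{n+1}}$, i.e.\ $\abs{r}/\abs{Q_{n+1}} < \abs{s}/\abs{Q_n}$. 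The strict form of the ultrametric inequality then gives $\abs{Q\xi - P} = \abs{s}/\abs{Q_n} \geq 1/\abs{Q_n} \geq 1/\abs{Q}$, contradicting the hypothesis; hence $s = 0$. The one step that needs genuine attention is this final case analysis bounding $\abs{r}/\abs{s}$; the remaining ingredients are just the recurrence~\eqref{E:Qpair}, the product formula~\eqref{E:absQn}, the error estimate~\eqref{E:cferror}, and the non-archimedean triangle inequality, and the low-index and rational-$\xi$ boundary situations flagged above should be dispatched first so that $A_{n+1}$ and $Q_{n+1}$ are always defined in the main case.
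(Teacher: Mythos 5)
Your argument is correct. Note that the paper does not prove this lemma at all --- it is quoted from Schmidt's paper \cite{Sch00} --- so there is no in-text proof to compare against; what you have written is the standard Legendre-type ``best approximation'' argument transported to the ultrametric setting, and it closes. The one step you flag as needing attention does go through: writing $Q = rQ_n + sQ_{n-1}$ with $s \neq 0$, either $\abs{rQ_n} > \abs{sQ_{n-1}}$, in which case $\abs{rQ_n} = \abs{Q} < \abs{Q_{n+1}}$ gives $\abs{r} < \abs{A_{n+1}} \leq \abs{s}\abs{A_{n+1}}$; or $\abs{rQ_n} \leq \abs{sQ_{n-1}} \leq \abs{sQ_n}$, whence $\abs{r} \leq \abs{s} < \abs{s}\abs{A_{n+1}}$ since $\deg A_{n+1} > 0$. (In the boundary case $n=0$ one has $Q_{-1}=0$, so only the first alternative can occur.) Either way $\abs{r}/\abs{Q_{n+1}} < \abs{s}/\abs{Q_n}$, the isosceles property pins $\abs{Q\xi - P}$ at $\abs{s}/\abs{Q_n} \geq 1/\abs{Q}$, and the contradiction forces $s=0$; your preliminary reductions for rational $\xi$ and for the existence of the index $n$ are also handled correctly.
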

In part{i}cular, these convergents are the \emph{best approximants of second kind}~\citep{Khi64}. What this means is that for all polynomials $Q$ with $0 < \abs{Q} < \abs{Q_{n + 1}}$, we have $\abs{Q\xi - P} \geq \abs{Q_n \xi - P_n}$ for all $P \in \FqT$. For if not, let $P, Q\ ( 0 < \abs{Q} < \abs{Q_{n + 1}} )$ sat{i}sfy
    \begin{equation}\label{E:best}
      \abs{Q\xi - P} < \abs{Q_n\xi - P_n} = \frac{1}{\abs{Q_{n + 1}}} < \frac{1}{\abs{Q}}.
    \end{equation}
Then, \citeauthor{Sch00} says that such a $P / Q = P_m / Q_m$ for some $m \leq n$ which would imply
    \begin{equation}
      \abs{Q\xi - P} \geq \abs{Q_m\xi - P_m} = \frac{1}{\abs{Q_{m + 1}}} \geq \frac{1}{\abs{Q_{n + 1}}}
    \end{equation}
as $m \leq n$ and recalling~\eqref{E:absQn}. 
This however contradicts the assumpt{i}on in~\eqref{E:best}. 
Note that if all the $A_i$'s in the cont{i}nued fract{i}on expansion of $\xi$ were to be linear polynomials over $\F_q$, 
we see that the constant on the right side in \eqref{E:Dirichlet} cannot be improved for uncountably many $\xi$'s
 corresponding to the sequences $( A_i ) \in \{ T, T + 1 \}^{\N}$.\\[-0.1cm]

The quant{i}ty $\omega (\xi)$ is also known as the \emph{irrat{i}onality measure} of $\xi \in \Fqtinv \setminus \Fqt$. 
If $\omega \leq 1$, 
then the denominator sequences $\{ Q_n \}$ are strictly increasing in size and $\abs{Q_{n + 1}} \geq \abs{Q_n}^{\omega}$ 
for all $n \in \N$ trivially. 
When $1 < \omega < \omega (\xi)$, 
we have $\abs{Q \xi - P} \leq \abs{Q}^{-\omega}$ for inf{i}nitely many non-zero polynomials $Q$ and $P \in \FqT$. 
For all non-constant polynomials, $\abs{Q}^{- \omega} < \abs{Q}^{-1}$. 
Lemma~\ref{L:legendre} then says that any such fract{i}on $P / Q$ has to be a convergent to $\xi$. 
In other words, both $P$ and $Q$ are a non-zero polynomial mult{i}ple of some pair $( P_n, Q_n )$ 
and we conclude that for $1 < \omega < \omega (\xi)$ and $Q \in \FqT$ with $\abs{Q} \gg 1$,
    \begin{equation}\label{E:groQn}
      \dfrac{1}{\abs{Q_{n + 1}}} \leq \abs{R ( Q_n \xi - P_n )} = \abs{Q \xi - P} \leq \dfrac{1}{\abs{Q}^{\omega}} \leq \dfrac{1}{\abs{Q_n}^{\omega}}
    \end{equation}
where $R \in \FqT \setminus \{ 0 \}$. 
The bot{t}omline is $\abs{Q_{n + 1}} \geq \abs{Q_n}^{\omega}$ for inf{i}nitely many $n$'s. 
Lastly if $\omega > \omega (\xi)$, then in part{i}cular, 
$\abs{Q_n \xi - P_n} > \abs{Q_n}^{- \omega}$ for all $n$ large enough. 
On the other hand, we know its exact value to be $\abs{Q_{n + 1}}^{-1}$. 
We get that $\abs{Q_{n + 1}} < \abs{Q_{n}}^{\omega}$ for all $n \gg 1$.
%%%%%%%%%%%%%%%%%%%%%%%%%%%%%%%%%%%%%%%%%%%%%%%%%%%%%%%%%%%%%%%%%%%%%%%%%%%%%%%%%%%%%%%%%%%%%%%%%%%%%%%%%%%%%%%%%%%%%%%%%%%
%%%%%%%%%%%%%%%%%%%%%%%%%%%%%%%%%%%%%%%%%%%%%%%%%%%%%%%%%%%%%%%%%%%%%%%%%%%%%%%%%%%%%%%%%%%%%%%%%%%%%%%%%%%%%%%%%%%%%%%%%%%
  \section{Inhomogeneous approximat{i}on}\label{S:minkowski}
This sect{i}on is largely dedicated towards obtaining an analogous version of Minkowski's theorem 
in the field of formal power series.
Before that, we follow the proof route in the real case to establish some results related to geometry of numbers.
\begin{lem}\label{L:Cas}
      Let $\theta, \varphi, \psi\,(\neq 0)$ and $\chi$ be four formal Laurent series over $\F_q$ with
      \begin{equation}\label{E:cascond}
        \max \left\{ \abs{\theta \chi - \varphi \psi}, \abs{\psi \chi} \right\} \leq \Delta
      \end{equation}
      for some $\Delta > 0$. Then, there exists $P \in \FqT$ sat{i}sfying
      \begin{equation}\label{E:casres}
        \abs{\theta + P \psi} \abs{\varphi + P \chi} \leq q^{-1}\Delta \textrm{ as well as } 
        \abs{\theta + P \psi} \leq \abs{\psi}.
      \end{equation}
\end{lem}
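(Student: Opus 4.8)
The plan is to produce one explicit polynomial $P$ that does both jobs at once, rather than optimising over a family. Since $\psi\neq 0$, set $P := -[\theta/\psi]\in\FqT$. With this choice $\theta + P\psi = \psi\big(\theta/\psi - [\theta/\psi]\big) = \psi\,\langle\theta/\psi\rangle$, so
\[
  \abs{\theta + P\psi} = \abs{\psi}\cdot\norm{\theta/\psi} \leq q^{-1}\abs{\psi} \leq \abs{\psi},
\]
because the fractional part $\langle z\rangle$ always lies in $\invT\FqTinv$ and hence has norm at most $q^{-1}$. This already settles the second inequality in \eqref{E:casres}, and it also records the bound $\abs{\theta+P\psi}\le q^{-1}\abs{\psi}$ that will drive the rest.

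For the product bound, abbreviate $a := \theta + P\psi$ and $b := \varphi + P\chi$; the key point is that the $P$-terms cancel in the combination $a\chi - b\psi = \theta\chi - \varphi\psi$, so \eqref{E:cascond} hands us $\abs{a\chi - b\psi}\le\Delta$ together with $\abs{\psi\chi}\le\Delta$. Now split on the size of $\abs{b\psi}$. If $\abs{b\psi}\le\Delta$, then $\abs{a}\,\abs{b} \le q^{-1}\abs{\psi}\,\abs{b} = q^{-1}\abs{b\psi} \le q^{-1}\Delta$. If instead $\abs{b\psi} > \Delta \ge \abs{a\chi - b\psi}$, the strong triangle inequality forces $\abs{a\chi} = \abs{b\psi}$, i.e. $\abs{a}\abs{\chi} = \abs{b}\abs{\psi}$; multiplying the target quantity by $\abs{\psi}$ then gives $\abs{a}\abs{b}\abs{\psi} = \abs{a}^2\abs{\chi} \le q^{-2}\abs{\psi}^2\abs{\chi} = q^{-2}\abs{\psi}\,\abs{\psi\chi} \le q^{-2}\abs{\psi}\,\Delta$, whence $\abs{a}\abs{b}\le q^{-2}\Delta\le q^{-1}\Delta$. (If $a=0$ the product is $0$; in fact $a=0$, and likewise $\chi=0$, puts us in the first alternative anyway.) In both cases $\abs{\theta+P\psi}\,\abs{\varphi+P\chi}\le q^{-1}\Delta$, which is the first inequality in \eqref{E:casres}.

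I do not expect a genuine obstacle here: the whole content is that, unlike the archimedean prototype in Cassels — where one must weigh several admissible integers against one another and the constant deteriorates to $1/4$ — the non-archimedean absolute value makes the naive choice $P = -[\theta/\psi]$ optimal, and the two-case dichotomy on $\abs{b\psi}$ is handed to us for free by the ultrametric inequality instead of having to be engineered. The only care needed is to isolate the degenerate possibilities $a=0$ and $\chi=0$ so that no division by zero creeps into the estimates, and to remember that $\norm{\,\cdot\,}$ of a fractional part is automatically $\le q^{-1}$, which is precisely the slack that upgrades $\Delta$ to $q^{-1}\Delta$.
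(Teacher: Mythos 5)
Your proof is correct, and it takes a genuinely different route after the common first step. Both arguments begin by reducing $\theta$ modulo $\psi$ (your $P=-[\theta/\psi]$ is the paper's $P_0$ with $\deg(\theta+P_0\psi)<\deg\psi$), but they then diverge. The paper transplants Cassels' archimedean argument: it sets $\theta'=\theta+P_0\psi$, $\varphi'=\varphi+P_0\chi$, splits on whether $\abs{\varphi'}\leq\abs{\chi}$, and uses squaring and A.M.--G.M.\ inequalities to show that at least one of the \emph{two} candidates $P_0$ and $P_0+1$ makes the product small; it only obtains $\min\{\abs{\theta'\varphi'},\abs{(\theta'+\psi)(\varphi'+\chi)}\}<\Delta$ and relies implicitly on the discreteness of the value group to upgrade the strict inequality to $\leq q^{-1}\Delta$. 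You show instead that the single choice $P=-[\theta/\psi]$ always works: the $P$-terms cancel in $a\chi-b\psi$, the dichotomy $\abs{b\psi}\leq\Delta$ versus $\abs{b\psi}>\Delta$ is resolved by the ultrametric equality $\abs{a\chi}=\abs{b\psi}$ in the second case, and the factor $q^{-1}$ (indeed $q^{-2}$ in your second case) emerges explicitly from $\norm{\theta/\psi}\leq q^{-1}$ rather than as an afterthought. Your handling of the degenerate possibilities $a=0$ and $\chi=0$ is also right, since either forces $\abs{b\psi}=\abs{a\chi-b\psi}\leq\Delta$ and hence the first alternative. As a bonus your argument yields the stronger bound $\abs{\theta+P\psi}\leq q^{-1}\abs{\psi}$, which is harmless (in fact convenient) in the application to Theorem~\ref{Th:minkowski}; what it gives up, relative to the paper, is only the structural parallel with the real-variable proof that motivates the whole section.
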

    \begin{proof}
      Clearly, $\deg ( \theta + P_0 \psi ) < \deg \psi$ for some $P_0 \in \FqT$. 
      We let $\theta' := \theta + P_0 \psi$ and $\varphi' := \varphi + P_0 \chi$, 
      respect{i}vely so that $\abs{\theta' \chi - \varphi' \psi} \leq \Delta$.\\
      \textit{Case 1.} If $\abs{\varphi'} \leq \abs{\chi}$, we have imitat{i}ng \citep{Cas57}
      \begin{equation}
        16 \abs{\theta' \varphi'} \abs{( \theta' + \psi ) ( \varphi' + \chi )} 
        \leq \left( \abs{\theta'} + \abs{\theta' + \psi} \right)^2 \left( \abs{\varphi'} + \abs{\varphi' + \chi} \right)^2.
      \end{equation}
      By our construct{i}on, $\abs{\theta' + \psi} \leq \abs{\psi}$ and so is $\abs{\theta'}$. 
      We assumed $\abs{\varphi'} \leq \abs{\chi}$ which also implies that $\abs{\varphi' + \chi} \leq \abs{\chi}$. 
      The conclusion is that
      \begin{equation}\label{E:cas1}
        \abs{\theta' \varphi'} \abs{( \theta' + \psi ) ( \varphi' + \chi )} < \abs{\psi}^2 \abs{\chi}^2 \leq \Delta^2.
      \end{equation}
      \textit{Case 2.} Else if $\abs{\varphi'} > \abs{\chi}$, 
      it follows that $\abs{\theta' \chi } < \abs{\varphi' \psi}$ 
      which in turn means that $\abs{\theta' \chi - \varphi' \psi} = \abs{\varphi' \psi}$. 
      The A.~M.\,--\,G.~M.\ inequality dictates
      \begin{align}
        2 \left( \abs{\theta' \varphi'} \abs{( \theta' + \psi ) ( \varphi' + \chi )} \right)^{\frac{1}{2}} &\leq \abs{\varphi' ( \theta' + \psi )} + \abs{\theta' ( \varphi' + \chi )}\label{E:cas2}\\
        &\leq \abs{\varphi' \psi} + \abs{\theta' \varphi'} < 2\Delta.\notag
      \end{align}
      The equat{i}ons~\eqref{E:cas1} and \eqref{E:cas2} together give us that $\min \left\{ \abs{\theta' \varphi'}, 
      \abs{( \theta' + \psi ) ( \varphi' + \chi )} \right\}$ is less than $\Delta$. 
      We remind the reader that $\max \left\{ \abs{\theta'}, \abs{\theta' + \psi} \right\} \leq \abs{\psi}$. 
      Otherwise said, 
      one of the subst{i}tut{i}ons $P = P_0$ or $P = P_0 + 1$ in~\eqref{E:casres} proves our claim.
    \end{proof}
   In the proof, we need the following version of Minkowski's linear forms theorem.
    \begin{theorem}[\cite{Spr69, Zhe17}]\label{C:lf}
Let $A=(a_{i,j})_{n\times n}$ be an $n\times n$ matrix with entries in \Fqtinv\ 
and $\underline{r}=(r_1,r_2,\cdots,r_n)$ be an $n-$tuple of integers. 
If
      \[
        0 < \abs{\det(A)} < q^{-(r_1+r_2+\cdots+r_n)},
      \]
then there is a non-zero integral point $\underline{u}$ such that $|L_i(\underline{u})|<q^{-r_i} $ for all $1\le i\le n$, 
where the linear forms $L_i(\underline{x})$ are determined by the rows of the matrix $A$.
    \end{theorem}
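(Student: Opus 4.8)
The plan is to recast the statement as an instance of the non-archimedean Minkowski convex body theorem applied to the lattice $A\FqT^n$ inside $\Fqtinv^n$. The point of departure is that $\FqT$ is a discrete cocompact subgroup of \Fqtinv: since $\Fqtinv = \FqT \oplus \invT\FqTinv$ as additive groups (this is exactly the decomposition $x = [x] + \braket{x}$), the set $\invT\FqTinv$ is a fundamental domain, whence $\vol{\Fqtinv / \FqT} = \vol{\invT\FqTinv} = 1$ under our normalization, and therefore $\FqT^n$ is a lattice of covolume $1$ in $\Fqtinv^n$.

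First I would pass to the lattice $\Lambda := A\FqT^n$. As $\det A \neq 0$, the map $\x \mapsto A\x$ is a linear automorphism of $\Fqtinv^n$ that scales Haar measure by $\abs{\det A}$, so $\Lambda$ is again a lattice, now of covolume $\abs{\det A}$. Next I would introduce the compact open subgroup $C := \{\, \underline{z} \in \Fqtinv^n : \abs{z_i} \leq q^{-r_i - 1}\ \textrm{for all}\ i \,\}$; because every non-zero norm lies in $q^{\Z}$, membership in $C$ is exactly the system of strict inequalities $\abs{z_i} < q^{-r_i}$. A short computation with the normalization (the ball $\{\,\abs{z} \leq q^{N}\,\}$ in \Fqtinv\ has measure $q^{N+1}$) gives $\vol{C} = q^{-(r_1 + \cdots + r_n)}$, so the hypothesis $\abs{\det A} < q^{-(r_1 + \cdots + r_n)}$ is precisely the strict inequality $\vol{C} > \vol{\Fqtinv^n / \Lambda}$.

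The core is the pigeonhole step. The quotient map $\pi \colon \Fqtinv^n \to \Fqtinv^n / \Lambda$ preserves measure on every subset on which it is injective, while the total measure of the target is $\abs{\det A} < \vol{C}$; hence $\pi$ cannot be injective on $C$, so there exist distinct $c_1, c_2 \in C$ with $c_1 - c_2 \in \Lambda$. This is where the ultrametric inequality does the work that halving the body does in the classical proof: $C$ is an additive subgroup, so $c_1 - c_2 \in C$, and it is a non-zero element of $C \cap \Lambda$. Writing $c_1 - c_2 = A\U$ with $\U \in \FqT^n \setminus \{ 0 \}$, the relation $A\U \in C$ unwinds to $\abs{L_i(\U)} \leq q^{-r_i - 1} < q^{-r_i}$ for every $i$, which is the desired conclusion.

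I do not anticipate a real obstacle, since the result is classical and is only invoked here by reference; the two facts requiring care are structural rather than deep, namely that $\FqT^n$ is a unit-covolume lattice in $\Fqtinv^n$ and that $\x \mapsto A\x$ scales Haar measure by $\abs{\det A}$ (equivalently, that $\abs{\cdot}$ is the module of \Fqtinv). One could instead imitate the classical box-principle proof of Minkowski's linear forms theorem using boxes of polynomials of prescribed degree, but extracting the determinant bound that way forces an auxiliary reduction of $A$ (essentially a choice of good lattice basis), so the lattice formulation above is the most economical route.
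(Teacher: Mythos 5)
The paper does not prove this statement at all: it is quoted verbatim as a known result with references to Sprindzhuk and to Zheng, so there is no internal argument to compare yours against. Your proof is correct and is the standard one. The volume bookkeeping is right under the paper's normalization ($\vol{\invT\FqTinv}=1$ forces the ball $\abs{z}\leq q^{N}$ to have measure $q^{N+1}$, so your box $C$ has measure exactly $q^{-(r_1+\cdots+r_n)}$), the scaling of Haar measure by $\abs{\det A}$ is the correct non-archimedean module computation, and the Blichfeldt pigeonhole step is clean because the strict hypothesis $\abs{\det A}<\vol{C}$ directly contradicts injectivity of the projection on $C$. The one genuinely non-classical point --- that the ultrametric inequality makes $C$ an additive subgroup, so that $c_1-c_2\in C$ and the factor $2^n$ of the real-variable Minkowski theorem disappears --- is exactly what accounts for the absence of that factor in the statement, and you identified it explicitly. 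Your closing remark is also apt: the alternative box-principle proof over $\F_q[T]$ works but requires reducing $A$ first, so the lattice formulation is the economical route and is essentially what the cited sources do.
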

    \begin{theorem}\label{Th:minkowski}
      Let $L_j (\P) := \lambda_j P_1 + \kappa_j P_2$ for $j = 1, 2$ and $\Delta = \abs{\lambda_1 \kappa_2 - \lambda_2 \kappa_1} > 0$.
      \begin{enumerate}
        \item For all $\rho_1, \rho_2 \in \Fqtinv, \exists \underline{Q} \in \FqT^2$ such that
        \begin{equation}\label{E:inhom}
          \abs{L_1 (\underline{Q}) + \rho_1}\abs{L_2 (\underline{Q}) + \rho_2} \leq q^{-2}\Delta.
        \end{equation}
        \item If moreover $\kappa_1 \lambda_1^{-1} \notin \Fqt$ and $k \in \N$, 
        there exists $\underline{Q}$ which also makes
        \begin{equation}\label{E:irr}
          \deg \left( L_1 (\underline{Q}) + \rho_1 \right) < -k.
        \end{equation}
      \end{enumerate}
    \end{theorem}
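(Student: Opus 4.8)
The plan is to derive both parts from Lemma~\ref{L:Cas} (the one--step estimate) together with Minkowski's linear forms theorem (Theorem~\ref{C:lf}), exploiting the fact that over the discretely valued field $\Fqtinv$ the strict inequalities in Theorem~\ref{C:lf} automatically save a factor of $q$. The obstacle is that Lemma~\ref{L:Cas} accommodates only a \emph{single} translation parameter, whereas $(\rho_1,\rho_2)$ is an arbitrary pair; the remedy is to first spend the second direction of the lattice $\FqT^2$ to annihilate the ``cross term'' appearing in the hypothesis of Lemma~\ref{L:Cas}, feeding only the remainder into the lemma. Write $\Delta' := \lambda_1\kappa_2 - \lambda_2\kappa_1$, so that $\Delta = \abs{\Delta'}$, and split according to whether $L_1$ is injective on $\FqT^2$.

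Suppose first that $\lambda_1 \neq 0$ and $\kappa_1/\lambda_1 \notin K$, i.e.\ $L_1$ is injective on $\FqT^2$; this is the standing hypothesis of part~(2). Fix an integer $r \geq 0$ (with $r \geq k$ when \eqref{E:irr} is desired) and apply Theorem~\ref{C:lf} to the coefficient matrix of $(L_1,L_2)$ with the integer pair $(r,\,-\deg\Delta'-1-r)$; since $\abs{\Delta'} = q^{\deg\Delta'} < q^{\deg\Delta'+1}$, this gives a nonzero $\underline{u} \in \FqT^2$, which we may take primitive, with $\abs{L_1(\underline{u})} < q^{-r}$ and $\abs{L_2(\underline{u})} < q^{r+1}\Delta$. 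By injectivity $L_1(\underline{u}) \neq 0$, and since all the relevant norms are powers of $q$ we obtain $\abs{L_1(\underline{u})} \leq q^{-r-1}$ and, crucially, $\abs{L_1(\underline{u})\,L_2(\underline{u})} \leq q^{-1}\Delta$. Next I would complete $\underline{u}$ to a basis $\{\underline{u},\underline{v}\}$ of $\FqT^2$ and abbreviate $\psi = L_1(\underline{u})$, $\chi = L_2(\underline{u})$, $\psi' = L_1(\underline{v})$, $\chi' = L_2(\underline{v})$; multiplicativity of $2\times2$ determinants gives $\abs{\psi\chi'-\psi'\chi} = \Delta$. For $\underline{Q} = P\underline{u}+R\underline{v}$ with $P,R \in \FqT$, putting $\theta := R\psi'+\rho_1$ and $\varphi := R\chi'+\rho_2$ makes $L_1(\underline{Q})+\rho_1 = \theta+P\psi$ and $L_2(\underline{Q})+\rho_2 = \varphi+P\chi$, while $\theta\chi-\varphi\psi = (\psi'\chi-\chi'\psi)R + (\rho_1\chi-\rho_2\psi)$ is an affine function of $R$ whose leading coefficient has norm $\Delta$; hence $R$ can be chosen with $\abs{\theta\chi-\varphi\psi} \leq q^{-1}\Delta$. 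Now $\max\{\abs{\theta\chi-\varphi\psi},\abs{\psi\chi}\} \leq q^{-1}\Delta$ and $\psi \neq 0$, so Lemma~\ref{L:Cas}, applied with its $\Delta$ taken to be $q^{-1}\Delta$, yields $P \in \FqT$ with $\abs{\theta+P\psi}\,\abs{\varphi+P\chi} \leq q^{-2}\Delta$ and $\abs{\theta+P\psi} \leq \abs{\psi}$. The vector $\underline{Q} := P\underline{u}+R\underline{v}$ then satisfies \eqref{E:inhom}, and $\abs{L_1(\underline{Q})+\rho_1} = \abs{\theta+P\psi} \leq \abs{\psi} \leq q^{-r-1}$ gives \eqref{E:irr} once $r \geq k$.

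For part~(1) in the remaining case, $\ker L_1 \cap \FqT^2 \neq \{\zero\}$, the image $L_1(\FqT^2)$ is a discrete free $\FqT$--module, of mesh $m_1$ say, and along the one--dimensional kernel sublattice $L_2$ is a nonconstant affine function with values in a coset of an $\FqT$--module of mesh $\Delta/m_1$ (again by multiplicativity of a $2\times2$ determinant). One first picks $\underline{Q}$ minimizing $\abs{L_1(\underline{Q})+\rho_1}$, which is then at most $q^{-1}m_1$ because fractional parts have norm $\leq q^{-1}$, and then translates $\underline{Q}$ along the kernel direction to minimize $\abs{L_2(\underline{Q})+\rho_2} \leq q^{-1}\Delta/m_1$; the product is then $\leq q^{-2}\Delta$, as required.

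I expect the substantive point to be the preparation step in the second paragraph: one must push $\abs{\theta\chi-\varphi\psi}$ \emph{strictly} below $\Delta$ by the choice of $R$, and independently have $\abs{\psi\chi} \leq q^{-1}\Delta$ rather than merely $\leq\Delta$ — the latter being exactly where the strictness in Theorem~\ref{C:lf} is used. These two $q$--savings, together with the $q^{-1}$ already inside Lemma~\ref{L:Cas}, are what produce $q^{-2}$ as the final (and best possible) constant. Everything else — primitivity, the behaviour of the degenerate forms, and the arithmetic of the meshes — is routine bookkeeping.
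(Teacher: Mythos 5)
Your argument for the main case is essentially the paper's own proof in different notation: you apply Theorem~\ref{C:lf} to produce a primitive vector $\underline{u}$ on which $L_1$ is small and the product $\abs{L_1(\underline{u})L_2(\underline{u})}$ is at most $q^{-1}\Delta$, complete $\underline{u}$ to a unimodular basis (the paper's matrix $\left(\begin{smallmatrix} P_1 & R_1\\ P_2 & R_2\end{smallmatrix}\right)$), choose the coefficient $R$ of the second basis vector to push the cross term $\abs{\theta\chi-\varphi\psi}$ down to $q^{-1}\Delta$ (the paper's choice of $Q_2'$), and finish with Lemma~\ref{L:Cas} applied with parameter $q^{-1}\Delta$; the bookkeeping of the two $q$--savings is identical. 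The only genuine divergence is the degenerate case $\ker L_1\cap \FqT^2\neq\{\zero\}$: the paper disposes of it by exchanging the roles of $L_1$ and $L_2$ and rerunning the main argument, whereas you argue directly that $L_1(\FqT^2)$ and the restriction of $L_2$ to the kernel sublattice are rank-one $\FqT$--modules of meshes $m_1$ and $\Delta/m_1$, and minimize each factor separately. Your version is in fact the more robust of the two, since the paper's exchange trick silently assumes $\kappa_2\lambda_2^{-1}\notin K$ and does not literally cover the doubly rational situation (e.g.\ $L_1(\underline{Q})=Q_1$, $L_2(\underline{Q})=Q_2$), which your module argument handles uniformly.
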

    \begin{proof}
      Assume $\kappa_1 \lambda_1^{-1} \notin \Fqt$ to begin with. 
      Theorem~\ref{C:lf} tells us that there is a non-zero $\P = ( P_1, P_2 ) \in \FqT^2$ for which
      \begin{equation}\label{E:min}
        \abs{L_1 (\P)} < q^{-k},\textrm{ and } \abs{L_2 (\P)} \leq q^k \Delta.
      \end{equation}
      We can assume that $\gcd ( P_1, P_2 ) = 1$ without any loss of generality. 
      Therefore, pick any $( R_1, R_2 ) \in \FqT^2$ for which
      \begin{equation}\label{E:det}
        \det \begin{pmatrix}
               P_1 & R_1\\
               P_2 & R_2
             \end{pmatrix} = 1
      \end{equation}
      and we have the transformed system
      \begin{equation}\label{E:ts}
        \begin{pmatrix}
          \,L_1 (\underline{Q})\,\\
          \,L_2 (\underline{Q})\,
        \end{pmatrix} =
        \begin{pmatrix}
          \lambda_1 & \kappa_1\\
          \lambda_2 & \kappa_2
        \end{pmatrix}
        \begin{pmatrix}
          Q_1\\
          Q_2
        \end{pmatrix} =
        \begin{pmatrix}
          \lambda'_1 & \kappa'_1\\
          \lambda'_2 & \kappa'_2
        \end{pmatrix}
        \begin{pmatrix}
          Q'_1\\
          Q'_2
        \end{pmatrix}
      \end{equation}
      for all $\underline{Q} = ( Q_1, Q_2 ) \in \FqT^2$, where
      \begin{equation}\label{E:Qprime}
        \begin{pmatrix}
          Q'_1\\
          Q'_2
        \end{pmatrix} :=
        \begin{pmatrix}
          P_1 & R_1\\
          P_2 & R_2
        \end{pmatrix}^{-1}
        \begin{pmatrix}
          Q_1\\
          Q_2
        \end{pmatrix}.
      \end{equation}
      It is plain that $( Q'_1, Q'_2 ) \in \FqT^2$ if and only if $( Q_1, Q_2 )$ does too and also that $\det
      \begin{pmatrix}
        \lambda_1 & \kappa_1\\
        \lambda_2 & \kappa_2
      \end{pmatrix} = \det
      \begin{pmatrix}
        \lambda'_1 & \kappa'_1\\
        \lambda'_2 & \kappa'_2
      \end{pmatrix}$ owing to~\eqref{E:det}. 
      Further, $\abs{\lambda'_1} = \abs{L_1 (\underline{P})} < q^{-k}$ 
      and similarly $\abs{\lambda'_2} \leq q^k \Delta$ from~\eqref{E:min}, \eqref{E:ts} and \eqref{E:Qprime}. 
      The former is non-zero since $\kappa_1 \lambda_1^{-1}$ was taken to be an irrat{i}onal funct{i}on in $\invT$.\\[-0.2cm]

      Let $Q'_2 \in \FqT$ be such that 
      $\abs{\rho_1 \lambda'_2 - \rho_2 \lambda'_1 - Q'_2 (\lambda_1 \kappa_2 - \lambda_2 \kappa_1)} \leq q^{-1}\Delta$. 
      Also, we subst{i}tute in Lemma~\ref{L:Cas}
      \begin{align}
        \theta = \kappa'_1 Q'_2 + \rho_1,\quad &\varphi = \kappa'_2 Q'_2 + \rho_2\label{E:subst}\\
        \psi = \lambda'_1,\quad &\textrm{and } \quad \chi = \lambda'_2\notag
      \end{align}
      to get that $\abs{\theta \chi - \psi \varphi} 
      = \abs{\rho_1 \lambda'_2 - \rho_2 \lambda'_1 - Q'_2 (\lambda_1 \kappa_2 - \lambda_2 \kappa_1)} \leq q^{-1}\Delta$ 
      as well as $\abs{\psi \chi} = \abs{\lambda'_1 \lambda'_2} \leq q^{-1}\Delta$. We already argued $\psi \neq 0$. 
      Thus, there exists some $Q'_1 \in \FqT$ (and the associated pair $( Q_1, Q_2 )$ determined by~\eqref{E:Qprime}) 
      for which
      \begin{align}
        \abs{L_1 (\underline{Q}) + \rho_1}\abs{L_2 (\underline{Q}) + \rho_2 } &= \abs{\lambda'_1 Q'_1 + \kappa'_1 Q'_2 + \rho_1}\abs{\lambda'_2 Q'_1 + \kappa'_2 Q'_2 + \rho_2}\notag\\
        &= \abs{\theta + \psi Q'_1}\abs{\varphi + \chi Q'_1} \leq q^{-2}\Delta \textrm{ and}\\
        \abs{L_1 (\underline{Q}) + \rho_1} &= \abs{\theta + \psi Q'_1} \leq \abs{\psi} < q^{-k}.\notag
      \end{align}
      If it happens that $\lambda_1 P_1 + \kappa_1 P_2 = 0$ (when $\kappa_1 \lambda_1^{-1} \in \Fqt$), 
      we can be sure that $\lambda_2 P_1 + \kappa_2 P_2 \neq 0$ as $\Delta \neq 0$. 
      Hence, we only need to exchange the roles of $L_1$ and $L_2$ amongst themselves 
      and the conclusion in~\eqref{E:inhom} remains valid.
    \end{proof}
\noindent The constant on the right side in~\eqref{E:inhom} is the smallest possible in general as follows easily 
from the observat{i}on that $\abs{P_1 + \invT}\abs{P_2 + \invT} \geq q^{-2}$ for all $P_1, P_2 \in \FqT$ (here $\Delta = 1$).
We are now in a posit{i}on to prove the promised version of Minkowski's result on inhomogeneous Diophant{i}ne approximat{i}on.

\begin{proof}[Proof of Theorem \ref{Th:inhom}]
Let $L_1 (\underline{Q}) = \xi Q_1 + Q_2,\ L_2 (\underline{Q}) = Q_1,\ \rho_1 = \alpha$ 
and $\rho_2 = 0$ in Theorem~\ref{Th:minkowski}. 
We have $\Delta = 1$ and as $\xi \notin \Fqt$ and $\alpha \neq Q\xi + P$ for any $P, Q \in \FqT$, 
the solut{i}on set $\big\{ \big( Q^{(k)}_1, Q^{(k)}_2 \big) \big\}$ to~\eqref{E:inhom} 
corresponding to dif{f}erent $k$ in~\eqref{E:irr} is inf{i}nite.

Now we turn to prove the second part.
Let $\xi = [ A_1, A_2, \cdots ]$ such that $\deg A_i > 0$ for all $i \geq 1$ and $T \mid A_i$ in \FqT\ for all $i$. 
It is clear that $\abs{\xi} = 1 / \abs{A_1}$. Now, suppose
      \begin{equation}\label{E:rea}
        q^2 \abs{Q ( Q\xi - \alpha - P )} = \abs{TQ} \abs{( TQ + 1 )\xi - ( TP + 1 )} < 1
      \end{equation}
for some non-zero polynomial $Q$ and $P \in \FqT$. 
Then, \abs{TQ + 1}\ has to be at least $\abs{A_1} = \abs{Q_1} \geq \abs{Q_1 Q_0}^{1/2}$ or else, 
$\abs{( TQ + 1 )\xi} < 1$ rendering~\eqref{E:rea} untrue. 
We thereby have a unique $n \geq 1$ such that
      \begin{equation}\label{E:sandwich}
        \abs{Q_n Q_{n -1}}^{1/2} \leq \abs{TQ} = \abs{TQ + 1} < \abs{Q_{n + 1} Q_n}^{1/2}.
      \end{equation}
 As $\abs{P_{n - 1}Q_n - P_n Q_{n - 1}} = 1$, we have a unique pair $( U, V ) \in \FqT^2$ sat{i}sfying
      \begin{equation}\label{E:UV}
        \begin{pmatrix}
          P_{n - 1} & P_n\\
          Q_{n - 1} & Q_n
        \end{pmatrix}
        \begin{pmatrix}
          U\\ V
        \end{pmatrix} =
        \begin{pmatrix}
          TP + 1\\ TQ + 1
        \end{pmatrix}
      \end{equation}
      which obey
      \begin{align}\label{E:Ubd}
        \abs{U} &= \abs{( TP + 1 )Q_n - P_n ( TQ + 1 )}\\
                &= \abs{( TQ + 1 ) ( Q_n \xi - P_n ) - Q_n \big( ( TQ + 1 )\xi - ( TP + 1 ) \big)} < \abs{A_n}^{1/2}\notag
      \end{align}
      and using~\eqref{E:sandwich} again,
      \begin{equation}\label{E:Vbd}
        \abs{V} = \abs{( TP + 1 )Q_{n - 1} - P_{n - 1} ( TQ + 1 )} < \abs{A_{n + 1}}^{1/2}.
      \end{equation}
 Our assumpt{i}on about the part{i}al quot{i}ents $A_i$'s of $\xi$ gives 
      that exactly one of the polynomials $P_n$ and $Q_n$ is divisible by $T$ in \FqT\ for each $n$. 
 This implies neither $U$ nor $V$ can be zero in~\eqref{E:UV} and in turn that $\abs{UV} \geq 1$. 
 By~\eqref{E:cferror}, \eqref{E:sandwich} and \eqref{E:UV}, we have
      \begin{align}
        \abs{TQ}&\abs{( TQ + 1 )\xi - ( TP + 1 )}\notag\\
          &= \abs{\frac{( TQ + 1 )}{Q_n}\big( \frac{(TQ + 1 )\xi - ( TP + 1 )}{Q_{n - 1}\xi - P_{n - 1}} \big)}\notag\\
        &= \abs{U\frac{Q_{n - 1}}{Q_n} + V}\abs{U + V\frac{Q_n \xi - P_n}{Q_{n - 1}\xi - P_{n - 1}}}\\
        &= \abs{UV \big( 1 + \frac{Q_{n - 1} ( Q_n \xi - P_n )}{Q_n ( Q_{n - 1}\xi - P_{n - 1} )} \big) + U^2 \frac{Q_{n - 1}}{Q_n} + V^2 \frac{Q_n \xi - P_n}{Q_{n - 1}\xi - P_{n - 1}}}.\notag
      \end{align}
The f{i}rst term in the last expression has absolute value at least $1$ as 
      we argued $\abs{Q_{n + 1}} > \abs{Q_n} > \abs{Q_{n -1}}$ before. 
The summands involving $U^2$ and $V^2$ have strictly smaller norm because of~\eqref{E:cferror}, 
\eqref{E:absQn}, \eqref{E:Ubd} and \eqref{E:Vbd}. 
We thus have a contradict{i}on to the hypothesis in~\eqref{E:rea}.
    \end{proof}
    The following statement, whose proof is omi{t}ted, also implies that our bound in Minkowski's theorem is the best possible.
    \begin{prop}
      Let $\xi = [ A_1, A_2, \cdots ] \in T^{-1}\FqTinv \setminus \Fqt$ be such that there is a non-constant, irreducible polynomial $R$ which divides $A_i$ in \FqT\ for all $i > 0$. Then, there exists an $\alpha \in \Fqtinv$ such that
      \[
        \norm{Q\xi - \alpha} \geq \abs{R}^{-2}\abs{Q}^{-1}
      \]
      for all $Q \in \FqT \setminus \{ 0 \}$.
    \end{prop}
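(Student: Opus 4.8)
The plan is to re-run, almost verbatim, the argument that gave the optimality half of Theorem~\ref{Th:inhom}, with the polynomial $T$ replaced throughout by $R$. I would take $\alpha := R^{-1}(1-\xi) \in \Fqtinv$. For a non-zero $Q \in \FqT$ and any $P \in \FqT$ one has the identity $(RQ+1)\xi - (RP+1) = R(Q\xi - \alpha - P)$, hence
\[
  \abs{R}^2\abs{Q}\cdot\abs{Q\xi - \alpha - P} = \abs{RQ}\cdot\abs{(RQ+1)\xi - (RP+1)} .
\]
Since $\norm{Q\xi - \alpha}$ is the minimum of $\abs{Q\xi - \alpha - P}$ over $P \in \FqT$, the proposition is equivalent to the statement that $\abs{RQ}\cdot\abs{(RQ+1)\xi - (RP+1)} \geq 1$ for every non-zero $Q$ and every $P$, which I would prove by contradiction, supposing this product to be $< 1$.

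From this point the steps copy the proof of Theorem~\ref{Th:inhom}. As $\deg R \geq 1$ and $Q \neq 0$ we have $\abs{RQ} \geq q > 1$, so $\abs{RQ+1} = \abs{RQ}$; and if $\abs{RQ+1} < \abs{A_1}$ then $\abs{(RQ+1)\xi} < 1 \leq \abs{RP+1}$ makes $\abs{(RQ+1)\xi - (RP+1)} \geq 1$ and the product at least $\abs{RQ} > 1$, a contradiction, so $\abs{RQ} \geq \abs{A_1} = \abs{Q_1}$. There is then a unique $n \geq 1$ with $\abs{Q_nQ_{n-1}}^{1/2} \leq \abs{RQ} < \abs{Q_{n+1}Q_n}^{1/2}$. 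Let $(U,V) \in \FqT^2$ be the unique solution of
\[
  \begin{pmatrix} P_{n-1} & P_n\\ Q_{n-1} & Q_n\end{pmatrix}\begin{pmatrix} U\\ V\end{pmatrix} = \begin{pmatrix} RP+1\\ RQ+1\end{pmatrix} .
\]
Writing $U$ and $V$ by Cramer's rule and invoking \eqref{E:cferror}, \eqref{E:absQn} and the sandwich bound exactly as before, one gets $\abs{U} < \abs{A_n}^{1/2}$ and $\abs{V} < \abs{A_{n+1}}^{1/2}$, so $\abs{U}^2 < \abs{A_n}$ and $\abs{V}^2 < \abs{A_{n+1}}$.

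The only place where passing from $T$ to a general irreducible $R$ needs a check is the divisibility bookkeeping. Because $R \mid A_i$ for all $i > 0$, the recurrences \eqref{E:Qpair} reduce modulo $R$ to $P_n \equiv P_{n-2}$ and $Q_n \equiv Q_{n-2} \pmod R$ for $n \geq 1$; together with the initial values $P_{-1} = 1$, $P_0 = 0$, $Q_{-1} = 0$, $Q_0 = 1$ this shows $R \mid P_n$ exactly for even $n$ and $R \mid Q_n$ exactly for odd $n$. Hence the matrix above reduces modulo $R$ to a permutation matrix, so $(U,V) \equiv (1,1) \pmod R$; in particular $U$ and $V$ are non-zero, whence $\abs{UV} \geq 1$. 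Finally I would expand $\abs{RQ}\cdot\abs{(RQ+1)\xi - (RP+1)}$, just as in Theorem~\ref{Th:inhom}, into a sum whose leading term has norm $\abs{UV}\cdot\big|\,1 + Q_{n-1}(Q_n\xi - P_n)/\big(Q_n(Q_{n-1}\xi - P_{n-1})\big)\,\big| = \abs{UV} \geq 1$ and whose remaining two summands have norms $\abs{U}^2/\abs{A_n} < 1$ and $\abs{V}^2/\abs{A_{n+1}} < 1$; by the ultrametric inequality the product equals $\abs{UV} \geq 1$, the required contradiction.

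I expect the modular bookkeeping to be the only genuine obstacle, and it is a mild one: one must observe that ``exactly one of $P_n, Q_n$ is divisible by $R$'' persists for any irreducible $R$ (the period-two pattern modulo $R$ uses only $R \mid A_i$, never $R = T$), and that this is precisely what forces $U$ and $V$ to be coprime to $R$, hence non-zero. Everything else is the same chain of norm estimates already carried out. A convenient byproduct is that the inequality yields $\norm{Q\xi - \alpha} > 0$ for all non-zero $Q$, so no separate verification of $\alpha \notin \FqT + \FqT\xi$ is needed.
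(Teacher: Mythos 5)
Your proposal is correct, and it is precisely the argument the paper intends: the proof is explicitly omitted there with the remark that it parallels the optimality half of Theorem~\ref{Th:inhom}, and your substitution $T \mapsto R$, $\alpha = R^{-1}(1-\xi)$, reproduces that argument verbatim, with the one genuinely new point --- the period-two pattern of $P_n, Q_n$ modulo $R$ forcing $U \equiv V \equiv 1 \pmod{R}$ and hence $\abs{UV} \geq 1$ --- checked correctly.
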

    In this theorem, it implies that $\omega ( \xi, \alpha ) = 1$ for such a $( \xi, \alpha )$. Actually,
this is also the value of the asymptot{i}c exponent for any $\xi \in \Fqtinv \setminus \Fqt$ and almost all $\alpha$.
Let us f{i}rst observe:
    \begin{prop}
      For any irrational element $\xi$ in \Fqtinv, we have $\hatomega{\xi}=1$.
    \end{prop}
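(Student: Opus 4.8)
The plan is to prove the two inequalities $\hatomega{\xi}\geq 1$ and $\hatomega{\xi}\leq 1$ separately, using only the continued fraction facts from Section~\ref{S:cf}. Write $\xi=[A_0;A_1,A_2,\dots]$ with convergents $P_n/Q_n$; since $\xi$ is irrational the expansion is infinite, the degrees $\deg Q_n=\sum_{i\leq n}\deg A_i$ are strictly increasing and tend to $\infty$, and by~\eqref{E:cferror} one has $\abs{Q_n\xi-P_n}=\abs{Q_{n+1}}^{-1}$.

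For the lower bound, fix any $\omega<1$ and any large $H\geq 1$. Let $n$ be the largest index with $\abs{Q_n}\leq H$; then $\abs{Q_{n+1}}>H$ and $Q:=Q_n\neq 0$ satisfies $\abs{Q}\leq H$ together with
\[
  \norm{Q\xi}\leq\abs{Q_n\xi-P_n}=\frac{1}{\abs{Q_{n+1}}}<\frac{1}{H}\leq H^{-\omega},
\]
the last step because $H\geq 1$ and $\omega<1$. Thus the system~\eqref{exponent} with $A=\xi$ and $\thetabar=\zero$ is solvable for every sufficiently large $H$, so $\hatomega{\xi}\geq\omega$; letting $\omega\uparrow 1$ gives $\hatomega{\xi}\geq 1$. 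This is nothing but the uniform form of Dirichlet's theorem~\eqref{E:Dirichlet}.

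For the upper bound I fix any $\omega>1$ and exhibit arbitrarily large values of $H$ admitting \emph{no} nonzero solution of $\abs{Q}\leq H$, $\norm{Q\xi}\leq H^{-\omega}$; this shows $\omega$ does not lie in the set defining $\hatomega{\xi}$, hence $\hatomega{\xi}\leq 1$. Take $H_n:=\abs{Q_{n+1}}/q=q^{\deg Q_{n+1}-1}$, which is a power of $q$ and tends to $\infty$ with $n$. Any nonzero $Q$ with $\abs{Q}\leq H_n$ has $0<\abs{Q}<\abs{Q_{n+1}}$, so the best-approximation-of-second-kind property recorded right after Lemma~\ref{L:legendre} (see~\eqref{E:best}) gives $\norm{Q\xi}\geq\abs{Q_n\xi-P_n}=\abs{Q_{n+1}}^{-1}$. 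A solution would therefore force $\abs{Q_{n+1}}^{-1}\leq H_n^{-\omega}=q^{\omega}\abs{Q_{n+1}}^{-\omega}$, i.e.\ $\deg Q_{n+1}\,(\omega-1)\leq\omega$, i.e.\ $\deg Q_{n+1}\leq\omega/(\omega-1)$; since $\deg Q_{n+1}\to\infty$ this is impossible once $n$ is large, so these $H_n$ carry no solution.

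Combining the two bounds yields $\hatomega{\xi}=1$. The argument is essentially bookkeeping, and the only care needed is that all the norms involved lie in $q^{\Z}$, so that strict inequalities between norms translate into strict inequalities between degrees and the sandwich $\abs{Q}\leq H_n<\abs{Q_{n+1}}$ is genuine, and that ``there exist arbitrarily large $H$ with no solution'' is precisely the negation of the condition defining $\hatomega{\xi}$. I do not expect any substantial obstacle: the statement is the power series analogue of the classical fact that the uniform irrationality exponent of a single irrational number always equals $1$.
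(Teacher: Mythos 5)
Your proof is correct and follows essentially the same route as the paper: the upper bound comes from the best-approximant-of-second-kind property applied at $H$ chosen just below $\abs{Q_{n+1}}$, and the lower bound is the uniform form of Dirichlet's theorem via the convergents. You merely spell out both directions (and the reduction from "some $\omega>1$ fails" to "$\hatomega{\xi}\leq 1$") in more detail than the paper does.
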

    \begin{proof}
We denote the (inf{i}nite) sequence of convergents to $\xi$ by $(P_k/Q_k)_{k\geq 1}$ as before.
For $k$ suf{f}iciently large, let $Q$ be any non-zero polynomial with $\abs{Q} \leq q^{-1}\abs{Q_{k}} =: H_k$.
Because the convergents are well-known to be the best approximants of second kind,
it follows that
      \begin{equation}
        \|Q\xi\|\geq \|Q_{k-1}\xi\|=\frac{1}{|Q_k|}= \frac{1}{qH_k}.
      \end{equation}
This shows that $\hatomega{\xi}$ can be at most equal to $1$.
The fact that it is equal to $1$ is then obvious from Dirichlet's Theorem.
    \end{proof}
    Af{t}er this, we invoke Theorem~1.2 of \cite{BZ19} which states
    \begin{theorem}
      Let $A \in M_{m \times n} ( \Fqtinv )$ and $\underline{\theta} \in \Fqtinv^m$. Then,
      \[
        \omega ( A, \underline{\theta} ) \geq \frac{1}{\hatomega{\tp{A}}}\quad
        \text{and}\quad\hatomega{A, \underline{\theta}} \geq \frac{1}{\omega ( \tp{A )}}
      \]
      with both inequalit{i}es actually being equalit{i}es for (Haar-) almost all $\underline{\theta} \in \Fqtinv^m$.
    \end{theorem}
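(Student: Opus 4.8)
This is the power-series form of the inhomogeneous Khintchine transference principle, and my plan is the classical one adapted to the non-archimedean setting: first prove the two displayed inequalities, which hold for \emph{every} $\underline{\theta}$, and only then the a.e.\ equalities. Write $\Lambda=\FqT^n$ and $\mathbb{T}=\Fqtinv^n/\Lambda$; the structural fact I would lean on is that for each $\ell\ge0$ the ball $B(q^{-\ell})=\{\underline{z}:\abs{\underline{z}}\le q^{-\ell}\}/\Lambda$ is an \emph{open subgroup} of $\mathbb{T}$, with $\mathbb{T}/B(q^{-\ell})$ a finite $\F_q$-vector space of dimension $\ell n$. The engine is the exact duality this makes available: for $H=q^h$, the set $\{A\underline{x}\bmod\Lambda:\abs{\underline{x}}\le H\}$ is $q^{-\ell}$-dense in $\mathbb{T}$ — equivalently, \emph{every} $\underline{\theta}$ admits $\underline{x}$ with $\abs{\underline{x}}\le H$ and $\norm{A\underline{x}-\underline{\theta}}\le q^{-\ell}$ — \emph{if and only if} the system $\abs{\underline{y}}\le q^{\ell-1}$, $\norm{\tp{A}\underline{y}}\le q^{-h-2}$ has no non-zero solution $\underline{y}\in\FqT^n$. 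This is pure linear algebra over $\F_q$: the image of the subspace $\{\underline{x}:\abs{\underline{x}}\le H\}$ under $\underline{x}\mapsto A\underline{x}$ is an $\F_q$-subspace of $\mathbb{T}/B(q^{-\ell})$, equal to the whole group iff no non-zero functional kills it, and pairing via the $T^{-1}$-residue together with $\langle\tp{A}\underline{y},\underline{x}\rangle=\langle\underline{y},A\underline{x}\rangle$ identifies such a functional with exactly such a $\underline{y}$. (This replaces the appeal to Minkowski's linear forms theorem by an equality.)

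Granting this duality, I would derive both inequalities by unwinding the definitions of $\omega$ and $\widehat{\omega}$. For $\omega(A,\underline{\theta})\ge1/\hatomega{\tp{A}}$: fix $w<1/\hatomega{\tp{A}}$ and pick $w'$ with $\hatomega{\tp{A}}<w'<1/w$; by the meaning of the uniform homogeneous exponent there are arbitrarily large $Y$ for which $\abs{\underline{y}}\le Y,\ \norm{\tp{A}\underline{y}}\le Y^{-w'}$ has no non-zero solution. For such $Y$ set $q^{\ell-1}=Y$ and $h=\lfloor\ell/w\rfloor$; since $w'<1/w$, for $\ell$ large the dual system above forces $\abs{\underline{y}}\le Y,\ \norm{\tp{A}\underline{y}}\le Y^{-w'}$ and is hence vacuous, so the duality yields $q^{-\ell}$-density at scale $H=q^h$, i.e.\ a solution of $\norm{A\underline{x}-\underline{\theta}}\le q^{-\ell}\le H^{-w}$ with $\abs{\underline{x}}\le H$ for these arbitrarily large $H$; letting $w\uparrow1/\hatomega{\tp{A}}$ concludes. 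The inequality $\hatomega{A,\underline{\theta}}\ge1/\omega(\tp{A})$ is the same argument except that ``$w'>\omega(\tp{A})$'' makes $\norm{\tp{A}\underline{y}}\le\abs{\underline{y}}^{-w'}$ have only finitely many non-zero solutions, with norms bounded below (the case of norm $0$, i.e.\ an integer relation for $\tp{A}$, forces $\hatomega{\tp{A}}=\infty$ and trivializes the claim), so the ``no dual vector'' conclusion holds for \emph{all} large $Y$ — upgrading ``arbitrarily large $H$'' to ``all large $H$'', which is precisely a lower bound on the uniform inhomogeneous exponent.

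For the a.e.\ equalities I need the reverse inequalities almost everywhere, and the plan is Borel--Cantelli. The same duality computes, for $S(H,\varepsilon):=\{\underline{\theta}:\exists\,\underline{x},\ \abs{\underline{x}}\le H,\ \norm{A\underline{x}-\underline{\theta}}\le\varepsilon\}$, the exact measure $\nu(S(H,q^{-\ell}))=q^{-d_{h,\ell}}$, where $d_{h,\ell}$ is the $\F_q$-dimension of the space of dual vectors $\underline{y}$ at that scale. With $q^{-\ell}=H^{-w}$ and $w>1/\hatomega{\tp{A}}$ (resp.\ $w>1/\omega(\tp{A})$), a bound $\sum_h q^{-d_{h,\lceil wh\rceil}}<\infty$ gives $\nu(\limsup_H S(H,H^{-w}))=0$ and hence $\omega(A,\underline{\theta})\le1/\hatomega{\tp{A}}$ (resp.\ $\hatomega{A,\underline{\theta}}\le1/\omega(\tp{A})$) for a.e.\ $\underline{\theta}$. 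A zero--one law assists: $\{\underline{\theta}:\omega(A,\underline{\theta})>w\}$ is invariant under the translation action of $\overline{A\FqT^m}$ on $\mathbb{T}$, which is ergodic as soon as it is dense, i.e.\ as soon as $\tp{A}$ has no non-trivial integer relation (the contrary case being degenerate), so it suffices that the $\limsup$ not have full measure.

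The hard part will be exactly the convergence $\sum_h q^{-d_{h,\lceil wh\rceil}}<\infty$. The homogeneous hypothesis on $\tp{A}$ plainly supplies only one short dual vector per scale, i.e.\ $d_{h,\cdot}\ge1$, which is not summable; one must boost this to $d_{h,\cdot}\to\infty$ at a sufficient rate. I would do so by replacing the single dual vector by a chain of them, controlled by the successive minima of the one-parameter family of lattices attached to $\tp{A}$ (the power-series analogue of parametric geometry of numbers): largeness of $\hatomega{\tp{A}}$ (resp.\ $\omega(\tp{A})$) keeps the first minimum of the dual family low over a long range of the parameter, which by the non-archimedean form of Minkowski's second theorem produces many $\F_q$-independent short dual vectors, hence the required growth of $d_{h,\cdot}$. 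Making this quantitative — or else deducing the a.e.\ upper bound from an effective equidistribution statement for $\{A\underline{x}\bmod\Lambda\}$ — is the technical core; the two inequalities and the duality lemma are essentially formal.
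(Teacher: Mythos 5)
First, a point of comparison: the paper does not prove this statement at all --- it is imported verbatim as Theorem~1.2 of \cite{BZ19} and simply invoked. So there is no in-paper argument to measure your proposal against; what follows is an assessment of the proposal on its own terms.

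Your duality lemma and the two universal inequalities are essentially right, and the observation that the non-archimedean setting turns Khintchine transference into an \emph{exact} linear-algebra duality (balls are $\F_q$-subspaces, annihilators correspond via the $T^{-1}$-residue pairing to dual vectors $\underline{y}$ with $\abs{\underline{y}}\leq q^{\ell-1}$, $\norm{\tp{A}\,\underline{y}}\leq q^{-h-2}$) is the correct and attractive route; the derivation of $\omega(A,\underline{\theta})\geq 1/\hatomega{\tp{A}}$ and $\hatomega{A,\underline{\theta}}\geq 1/\omega(\tp{A})$ from it, including the treatment of exact integer relations for $\tp{A}$, goes through. (Watch the indices: with $A\in M_{m\times n}$ the relevant torus is $\Fqtinv^m/\FqT^m$ and the dual variable lives in $\FqT^m$, not $\FqT^n$.)

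The a.e.\ equalities are where the proposal is incomplete, and one of the two has a structural problem beyond the admitted missing estimate. For the asymptotic exponent your plan works and is easier than you fear: you do not need successive minima or a parametric Minkowski second theorem. If $w>1/\hatomega{\tp{A}}$, choose $w'$ with $ww'>1$ and $w'<\hatomega{\tp{A}}$; the uniform hypothesis supplies at every scale a dual vector beating the threshold $q^{-h-2}$ by a factor $q^{-(ww'-1)h}$ up to constants, and the polynomial multiples $R\,\underline{y}_0$ with $\deg R\leq t\asymp \frac{ww'-1}{1+w'}h$ all remain admissible, giving $d_{h,\lceil wh\rceil}\gg h$ and hence geometric summability. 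The genuine flaw is in the uniform a.e.\ equality: when $\omega(\tp{A})>\hatomega{\tp{A}}$ and $w$ lies in the gap $\bigl(1/\omega(\tp{A}),\,1/\hatomega{\tp{A}}\bigr)$, the dimension $d_{h,\lceil wh\rceil}$ is $0$ for infinitely many $h$ (good dual vectors exist only along a sparse subsequence of scales), so $\sum_h q^{-d_{h,\lceil wh\rceil}}$ \emph{diverges} and the convergence Borel--Cantelli you invoke cannot yield $\hatomega{A,\underline{\theta}}\leq 1/\omega(\tp{A})$ a.e. What is needed there is the complementary mechanism: along the subsequence $h_i$ of scales admitting a dual vector of quality $w'>1/w$, the multiples trick gives $d_{h_i,\ell_i}\to\infty$, hence $\nu\bigl(S(q^{h_i},q^{-wh_i})\bigr)\to 0$, hence $\nu\bigl(\liminf_h S(q^h,q^{-wh})\bigr)=0$; almost every $\underline{\theta}$ therefore misses $S$ at infinitely many scales, which is exactly the failure of uniform approximability at exponent $w$. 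With that substitution (and dropping the inessential and delicate appeal to ergodicity of the translation action, whose invariance for a limsup-defined exponent set you have not justified), the proposal becomes a complete proof.
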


Our next endeavour is to prove that there is no uniform posit{i}ve lower bound for the funct{i}on $\hatomega{\xi, \alpha}$. 
The proposit{i}on below is in the spirit of Theorem~III of~\citep[Chap.~3]{Cas57} 
and our proof strategy borrows heavily from theirs.
    \begin{prop}\label{P:cas}
      Let $\Psi : \N \rightarrow \R_{> 0}$ be an approximat{i}ng funct{i}on with $\Psi (x) \rightarrow 0$ 
      as $x \rightarrow \infty$. Then, there exists a pair $( \xi, \alpha ) \in \Fqtinv^2$ such that the system
      \[
        \norm{Q\xi - \alpha} \leq \Psi (H),\quad \abs{Q} \leq H
      \]
      does not have a solut{i}on for inf{i}nitely many $H$.
    \end{prop}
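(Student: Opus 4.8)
The plan is to follow the construction behind Theorem~III of~\citep[Chap.~3]{Cas57}. First I would build $\xi = [A_1, A_2, \ldots] \in \invT\FqTinv \setminus \Fqt$ whose partial quotients grow as fast as $\Psi$ forces them to, and then extract $\alpha$ by a measure-theoretic pigeonhole argument inside the compact group $\invT\FqTinv$, whose total measure is $1$. The one convenience peculiar to function fields is that there is no carrying in $\FqTinv$, so $x \mapsto \braket{x}$ is a group homomorphism $\Fqtinv \to \invT\FqTinv$ with kernel $\FqT$; in particular $\norm{Q\xi - \alpha} = \abs{\braket{Q\xi} - \alpha}$ for every $\alpha \in \invT\FqTinv$ and $Q \in \FqT$.

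The crucial estimate --- valid once $A_1, \ldots, A_{n+1}$ have been fixed --- is that the finite family $\{\braket{Q\xi} : Q \in \FqT,\ \abs{Q} \leq H_n\}$, with $H_n := q^{\deg Q_{n+1} - (n+3)}$, clusters tightly around a small finite grid. Given $Q$ with $\abs{Q} \leq H_n$, Euclidean division by $Q_n$ yields $Q = b Q_n + Q^\ast$ with $\deg Q^\ast < \deg Q_n$ and $\abs{b} \leq \abs{Q}/\abs{Q_n} \leq H_n/\abs{Q_n}$; using $\abs{Q_n\xi - P_n} = \abs{Q_{n+1}}^{-1}$ from~\eqref{E:cferror} and the homomorphism property,
\[
  \braket{Q\xi} \;=\; b\,(Q_n\xi - P_n) \;+\; \braket{Q^\ast P_n / Q_n} \;+\; Q^\ast\bigl(\xi - P_n/Q_n\bigr),
\]
where the first term has norm $\leq H_n(\abs{Q_n}\abs{Q_{n+1}})^{-1} = q^{-(n+3)}\abs{Q_n}^{-1} =: r_n$, the third has norm $\abs{Q^\ast}(\abs{Q_n}\abs{Q_{n+1}})^{-1} < \abs{Q_{n+1}}^{-1}$, and $\braket{Q^\ast P_n/Q_n}$ lies in $G_n := \{\braket{R/Q_n} : R \in \FqT,\ \deg R < \deg Q_n\}$ (reduce $Q^\ast P_n$ modulo $Q_n$). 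Hence, as soon as $\deg A_{n+1} \geq n+3$ (so that $\abs{Q_{n+1}}^{-1} \leq r_n$), we get $\braket{Q\xi} \in \Bbar{g}{r_n}$ for some $g \in G_n$. Note that $G_n \subset \invT\FqTinv$ has exactly $\abs{Q_n}$ elements and depends only on $A_1, \ldots, A_n$.

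Consequently the set of $\alpha \in \invT\FqTinv$ for which the system $\norm{Q\xi - \alpha} \leq \Psi(H_n),\ \abs{Q} \leq H_n$ is solvable is contained in $F_n := G_n + \Bbar{0}{t_n}$ with $t_n := \max\{r_n, \Psi(H_n)\}$, so by sub-additivity $\vol{F_n} \leq \abs{Q_n}\cdot\vol{\Bbar{0}{t_n}}$. This is where the runaway partial quotients pay off: with $A_1, \ldots, A_n$ already fixed, $\deg H_n = \deg Q_n + \deg A_{n+1} - (n+3) \to \infty$ as $\deg A_{n+1} \to \infty$ whereas $r_n$ does not change, and $\Psi(x) \to 0$; so one may choose $A_{n+1}$ (with $\deg A_{n+1} \geq n+3$) large enough that $\Psi(H_n) \leq r_n$, which forces $t_n = r_n = q^{-(n+3)}\abs{Q_n}^{-1}$ and, since $\vol{\Bbar{0}{q^{-j}}} = q^{-(j-1)}$, gives $\vol{F_n} \leq q^{-(n+2)}$. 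Carrying this out inductively for $n = 1, 2, \ldots$ defines $\xi = [A_1, A_2, \ldots] \in \invT\FqTinv \setminus \Fqt$, and
\[
  \vol{\bigcup_{n \geq 1} F_n} \;\leq\; \sum_{n \geq 1} q^{-(n+2)} \;=\; \frac{1}{q^{2}(q-1)} \;<\; 1 \;=\; \vol{\invT\FqTinv},
\]
so there is $\alpha \in \invT\FqTinv \setminus \bigcup_{n \geq 1} F_n$. For this pair $(\xi,\alpha)$ the displayed system has no solution at $H = H_n$ for every $n$, and since $H_n \to \infty$ these are infinitely many values of $H$.

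The step I expect to be the main obstacle is the clustering estimate of the second paragraph: isolating the finite grid $G_n$, pinning the enclosing radius at exactly $r_n$, and then juggling the lower bounds on $\deg A_{n+1}$ needed so that the remainder term is $\leq r_n$, the measure bound $\vol{F_n} \leq q^{-(n+2)}$ holds, and $\Psi(H_n) \leq r_n$ --- all resting on the continued-fraction identity $\abs{Q_n\xi - P_n} = \abs{Q_{n+1}}^{-1}$. Once that estimate is secured, the remainder is routine measure bookkeeping, in the spirit of~\citep[Chap.~3]{Cas57}.
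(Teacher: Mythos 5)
Your argument is correct, but it reaches the conclusion by a genuinely different route from the paper. The paper fixes $\alpha = T^{-1}$ \emph{explicitly} and builds $\xi$ as the limit of rationals $R_n/S_n$ whose denominators have non-zero constant term; since $QR_nT - S_n(TP+1) \equiv -S_n \not\equiv 0 \pmod{T}$, one gets the clean lower bound $\norm{Q R_n/S_n - T^{-1}} \geq (q\abs{S_n})^{-1}$ for every $Q$, and the denominators $S_{N+1}$ are chosen so large (relative to $H_{N+1}$, itself chosen so that $\Psi(H_{N+1}) < q^{-2}\abs{S_N}^{-1}$) that the perturbation $\abs{Q}\,\abs{\xi - R_n/S_n}$ cannot destroy this bound. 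You instead build $\xi$ by continued fractions with runaway partial quotients and obtain $\alpha$ non-constructively: your clustering estimate $\braket{Q\xi} \in G_n + \Bbar{0}{r_n}$ for $\abs{Q} \leq H_n$ is sound (the decomposition $Q = bQ_n + Q^\ast$, the additivity of $\braket{\cdot}$ in positive characteristic, and the bounds $\abs{b(Q_n\xi - P_n)} \leq r_n$, $\abs{Q^\ast(\xi - P_n/Q_n)} < \abs{Q_{n+1}}^{-1} \leq r_n$ once $\deg A_{n+1} \geq n+3$ all check out), the measure bookkeeping $\vol{F_n} \leq q^{-(n+2)}$ is right under the paper's normalization, and the inductive choice of $A_{n+1}$ making $\Psi(H_n) \leq r_n$ is legitimate since $r_n$ is frozen while $H_n \to \infty$ with $\deg A_{n+1}$. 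The trade-off: the paper's proof exhibits a completely explicit pair $(\xi, \alpha)$, whereas yours shows the stronger statement that for your $\xi$ the insolubility holds for a set of $\alpha$ of measure at least $1 - q^{-2}(q-1)^{-1}$, at the cost of not naming any particular $\alpha$. Both are faithful to the spirit of Theorem~III of~\citep[Chap.~3]{Cas57}.
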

    \begin{proof}
      We f{i}x $\alpha = \invT$ and our desired element $\xi$ shall be the limit of a sequence of rat{i}onal funct{i}ons 
      $R_n / S_n, n \in \N$, 
      where $S_n \in \FqT \setminus T\,\FqT$ for all $n$. 
      We note that $\norm{Q \frac{R_n}{S_n} - \alpha} \geq ( q\abs{S_n} )^{-1}$ for all elements of this sequence 
      and any $Q \in \FqT$. 
      Let $R_0 / S_0 = 0$ and $R_1 / S_1 = ( T + 1 )^{-1}$. 
      In parallel, we construct a sequence $\{ H_n \}_{n \geq 2} \subset \N$ as follows:\\[-0.2cm]

      Assuming that $R_n, S_n, H_n$ have been def{i}ned for all $n \leq N$, let $H_{N + 1}$ be the smallest for which
      \begin{align}
        \Psi ( H_{N + 1} ) &< \frac{1}{q^2 \abs{S_N}}\quad ( N \geq 1 ), \textrm{ and}\label{E:nextH}\\
	    H_{N + 1} &> H_N\quad ( N > 1 ).\notag
      \end{align}
      As $\Psi (x) \rightarrow 0$ at inf{i}nity, 
      such a number can be found. 
      Now, choose any $S_{N + 1}$ with a non-zero constant term and $\abs{S_{N + 1}} \geq q H_{N + 1}\abs{S_N}$ 
      and let $R_{N + 1}$ equal the integral part $[ S_{N + 1} R_N S_N^{-1} ]$. 
      Then,
      \begin{equation}\label{E:error}
        \abs{\dfrac{R_{N + 1}}{S_{N + 1}} - \dfrac{R_{N}}{S_{N}}} \leq \dfrac{1}{q\abs{S_{N + 1}}} 
        \leq \dfrac{1}{q^2 \abs{S_N}H_{N + 1}}
      \end{equation}
      and the limit 
      $\xi := \lim_{n \rightarrow \infty} R_n / S_n = \lim_{n \rightarrow \infty} \sum_{i = 1}^n \left( R_i / S_i - R_{i - 1} / S_{i - 1} \right)$ exists. Moreover,
      \begin{equation}\label{E:xierror}
        \abs{\xi - \dfrac{R_n}{S_n}} = \lim_{m \rightarrow \infty} \abs{\dfrac{R_{n + m}}{S_{n + m}} - \dfrac{R_n}{S_n}} \leq \dfrac{1}{q^2 \abs{S_n} H_{n + 1}}
      \end{equation}
      because of the ultrametric inequality. 
      If now $Q \in \FqT$ with $\abs{Q} \leq H_{n + 1}$ and $P \in \FqT$ be any polynomial,
      \begin{align}\label{E:final}
        \abs{Q \xi - \alpha - P} &\geq \abs{Q \dfrac{R_n}{S_n} - \alpha - P} - \abs{Q}\abs{\xi - \dfrac{R_n}{S_n}}\notag\\
	                         &\geq \norm{Q \dfrac{R_n}{S_n} - \alpha} - \dfrac{1}{q^2\abs{S_n}}\\
				 &\geq \dfrac{1}{q\abs{S_n}} - \dfrac{1}{q^2\abs{S_n}} \geq \dfrac{1}{q^2\abs{S_n}} > \Psi ( H_{n + 1} )\notag
      \end{align}
      using~\eqref{E:nextH} and \eqref{E:xierror}. 
      Since this is true for any $P$, 
      the sequence $\{ H_n \}$ const{i}tutes the required set of inf{i}nitely many insoluble cases.
    \end{proof}
    The following result is also implied by Theorem 2.3 of \cite{BZ19}. Here, we give another simple proof.
    \begin{cor}
      There exists $( \xi, \alpha )$ such that $\hatomega{\xi, \alpha} = 0$.
    \end{cor}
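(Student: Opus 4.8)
The plan is to derive this as an immediate consequence of Proposition~\ref{P:cas}. Recall that $\hatomega{\xi, \alpha}$ is by definition the supremum of the set $S$ of those $\omega \geq 0$ for which the system $\norm{Q\xi - \alpha} \leq H^{-\omega}$, $\abs{Q} \leq H$ admits a solution $Q \in \FqT$ for every sufficiently large $H$. Since $H^{-\omega}$ is non-increasing in $\omega$ for $H \geq 1$, the set $S$ is downward closed in $[0, \infty)$, hence an interval with left endpoint $0$ and $\sup S = \hatomega{\xi, \alpha}$; in particular $0 \in S$ (for $\omega = 0$ the constraint $\norm{Q\xi - \alpha} \leq 1$ is vacuous), so $\hatomega{\xi, \alpha} \geq 0$ always. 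It therefore suffices to produce a pair $(\xi, \alpha)$ for which $\omega \notin S$ for \emph{every} $\omega > 0$, i.e.\ for which, given any $\omega > 0$, the displayed system is insoluble for arbitrarily large $H$.

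First I would fix an approximating function $\Psi : \N \rightarrow \R_{> 0}$ with $\Psi (x) \rightarrow 0$ as $x \rightarrow \infty$ which decays more slowly than every power; for concreteness one may take $\Psi (H) = 1 / \log H$ for $H \geq 2$ and $\Psi (1) := 1$. The point of this choice is that for each fixed $\omega > 0$ one has $\Psi (H) \geq H^{-\omega}$ for all $H$ large enough, because $H^{\omega} \geq \log H$ eventually.

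Next, apply Proposition~\ref{P:cas} to this $\Psi$ to obtain a pair $( \xi, \alpha ) \in \Fqtinv^2$ together with the (infinite) set $\mathcal{H} \subset \N$ of those $H$ for which the system $\norm{Q\xi - \alpha} \leq \Psi (H)$, $\abs{Q} \leq H$ has no solution. Now fix $\omega > 0$ arbitrarily. For all sufficiently large $H \in \mathcal{H}$ we have $H^{-\omega} \leq \Psi (H)$, so the tighter system $\norm{Q\xi - \alpha} \leq H^{-\omega}$, $\abs{Q} \leq H$ is \emph{a fortiori} insoluble; since $\mathcal{H}$ is infinite, such $H$ occur arbitrarily large. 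Hence $\omega \notin S$, that is, $\omega \geq \hatomega{\xi, \alpha}$. Letting $\omega \rightarrow 0^{+}$ and recalling $\hatomega{\xi, \alpha} \geq 0$ yields $\hatomega{\xi, \alpha} = 0$.

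I expect no real obstacle here, as the substantive construction has already been done in Proposition~\ref{P:cas}. The only points needing a little care are the choice of $\Psi$ — it must beat every polynomial rate of decay, which already rules out $\Psi (H) = H^{-1}$ — and the routine observation that insolubility of the system at infinitely many integer values of $H$ propagates to insolubility at arbitrarily large real $H$ in the definition of $\hatomega{\cdot}$, since $\norm{Q\xi - \alpha}$ and $\abs{Q}$ take only discrete values.
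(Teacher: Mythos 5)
Your proposal is correct and follows essentially the same route as the paper: both arguments apply Proposition~\ref{P:cas} to an approximating function that tends to zero more slowly than every power of $H$ (the paper uses $\Psi(n) = n^{-1/\log\log n}$ where you use $1/\log H$) and then let the exponent tend to $0$. The extra care you take about the definition of the supremum and about real versus integer values of $H$ is fine but not needed beyond what the paper records.
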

\begin{proof}
 Let $\Psi (n) := n^{- 1 / \log \log n}$. 
 It clearly goes to zero as $n$ tends to $\infty$. 
 In addi{t}ion for any $\eps > 0,\ \Psi (n) > n^{- \eps}$ eventually and hence, 
 the pair $( \xi, \alpha )$ corresponding to $\Psi$ given to us by Prop.~\ref{P:cas} has $\hatomega{\xi, \alpha} \leq \eps$. 
 Our choice of \eps\ was arbitrary.
 \end{proof}
  We will like to end this sect{i}on with a brief discussion on the issue of monicity. This is to say that our concern is to f{i}nd solut{i}ons to the inhomogeneous inequality
  \begin{equation}
    \abs{Q\xi - \alpha - P} < \eps
  \end{equation}
  when the polynomial $Q$ is restricted to be monic and $\xi \notin \Fqt$. The argument given below follows closely that of Kronecker for real numbers.
  \begin{proof}[Proof of Proposi{t}ion~\ref{P:monic}]
    From the funct{i}on f{i}eld Dirichlet's theorem, one knows that there are inf{i}nitely many polynomial pairs $( R, S )$ for which
    \begin{equation}
      S \xi - R = \xi_1 / S
    \end{equation}
    with $\abs{\xi_1} \leq 1/q$ and $\gcd ( R, S ) = 1$. We may assume here that $S$ is monic because homogeneity. The required polynomials for the inhomogeneous problem may be obtained by perturbing each of these $S$ where the amount of perturbat{i}on is determined by $\alpha$. Consider the polynomial part $[ S\alpha ]$. As $R$ and $S$ are co-prime in \FqT, we will be able to f{i}nd polynomials $P_1$ and $P_2$ such that $\abs{P_1} < \abs{S}$ and
    \begin{equation}
      [ S\alpha ] = R  P_1 + S P_2.
    \end{equation}
    Then, we have
    \begin{equation}
      S ( P_1 \xi + P_2 ) = RP_1 + SP_2 + \xi_1 / S = S\alpha + \alpha_1 + \xi_1 / S
    \end{equation}
    where again $\abs{\alpha_1} \leq 1/q$. On rearranging,
    \begin{equation}
    \abs{P_1 \xi - \alpha + P_2} \leq \frac{1}{q\abs{S}}.
    \end{equation}
    This bound does not change if we add the term $S\xi - R$ to the lef{t}-side expression. Furthermore, the polynomial $S + P_1$ is monic as well as $\abs{S + P_1} = \abs{S}$.
  \end{proof}
  The above proposi{t}ion implies that the set $\{ \braket{Q\xi} \mid Q \in \FqT,\text{ monic }\}$ is also dense in $\invT\FqTinv$. We can moreover derive a corollary from this result. Given $\eps > 0$, choose any monic $\eta \in \Fqtinv \setminus \Fqt ( \xi, \alpha )$ with $\abs{\eta} < \eps$ and replace $\alpha$ by $\alpha + \eta$ in Proposi{t}ion~\ref{P:monic}. The inf{i}nitely many monic solut{i}ons in $Q$ whose norm is more than $1/\eps$ will all help us to realize the constrained inequali{t}y
  \begin{equation}
    \abs{Q\xi - \alpha - P} < \eps
  \end{equation}
  under the demand that $Q$ and $Q\xi - \alpha - P$ be monic.
%%%%%%%%%%%%%%%%%%%%%%%%%%%%%%%%%%%%%%%%%%%%%%%%%%%%%%%%%%%%%%%%%%%%%%%%%%%%%%%%%%%%%%%%%%%%%%%%%%%%%%%%%%%%%%%%%%%%%%%%%%%
%%%%%%%%%%%%%%%%%%%%%%%%%%%%%%%%%%%%%%%%%%%%%%%%%%%%%%%%%%%%%%%%%%%%%%%%%%%%%%%%%%%%%%%%%%%%%%%%%%%%%%%%%%%%%%%%%%%%%%%%%%%
     \section{Exponents for \SLtwoFqT\ act{i}on}\label{S:SL2}
     Without any loss of generality, we assume that the start{i}ng point \x\ equals $\tp{( \xi, 1 )}$. 
If needed, we can also use the matrix
        \begin{equation}\label{E:J}
          J := \begin{pmatrix}
                 0 & -1\\
                 1 & 0
               \end{pmatrix}
        \end{equation}
to have $\abs{\xi} \leq 1$. 
This will mean that $\abs{P_k} \leq \abs{Q_k}$ for all $k \geq 0$, where $P_k / Q_k$ 
is the $k$-th convergent to $\xi$. 
Being a (signed) permutat{i}on matrix, 
$J$ has the desirable property that $\abs{J\x} = \abs{\x}$ for all $\x$ in $\Fqtinv^2$ 
as well as $\abs{J\gamma} = \abs{\gamma J} = \abs{\gamma}$ for all $2 \times 2$ matrices $\gamma$.
        \begin{lem}\label{L:lb}
          Let $\gamma \in \SLtwoFqT$ be such that $\abs{\gamma} < \abs{Q_{k + 1}}$ 
          where $P_k / Q_k $ are convergents to $\xi$. Then,
          \[
            \abs{\gamma \x} \geq \frac{1}{\abs{Q_k}}.
          \]
        \end{lem}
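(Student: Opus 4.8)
The plan is to argue by contradiction. Suppose $\gamma = \begin{pmatrix} a & b \\ c & d \end{pmatrix} \in \SLtwoFqT$ satisfies $\abs{\gamma} < \abs{Q_{k+1}}$ but nevertheless $\abs{\gamma\x} < 1/\abs{Q_k}$, where $\x = \tp{(\xi,1)}$ as fixed at the start of the section. Since $\gamma\x = \tp{(a\xi + b,\ c\xi + d)}$, the failure of the conclusion forces both $\abs{a\xi+b}$ and $\abs{c\xi+d}$ to be $< 1/\abs{Q_k}$. The key move is to feed the $k$-th convergent pair $\tp{(P_k,Q_k)}$ into $\gamma$: I will show that the integral vector $\gamma\,\tp{(P_k,Q_k)}$ must vanish, which is absurd because $\gamma$ is invertible over \Fqt\ and $Q_k \neq 0$.

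For the computation, write $P_k = Q_k\xi - (Q_k\xi - P_k)$ and substitute to obtain
\[
  aP_k + bQ_k = Q_k(a\xi+b) - a(Q_k\xi - P_k), \qquad cP_k + dQ_k = Q_k(c\xi+d) - c(Q_k\xi - P_k).
\]
Now invoke \eqref{E:cferror}, namely $\abs{Q_k\xi - P_k} = 1/\abs{Q_{k+1}}$, together with the hypotheses $\abs{a}, \abs{c} \leq \abs{\gamma} < \abs{Q_{k+1}}$ and $\abs{a\xi+b}, \abs{c\xi+d} < 1/\abs{Q_k}$. Each of the two terms on the right-hand sides then has $q$-adic norm strictly less than $1$, so by the ultrametric inequality both $aP_k + bQ_k$ and $cP_k + dQ_k$ have norm $< 1$. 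But these are elements of \FqT, and the only polynomial of $q$-adic norm less than $1$ is $0$; hence $\gamma\,\tp{(P_k,Q_k)} = \zero$. Since $\det\gamma = 1$, the inverse $\begin{pmatrix} d & -b \\ -c & a\end{pmatrix}$ has entries in \FqT\ and sends $\zero$ to $\tp{(P_k,Q_k)}$, which forces $Q_k = 0$ — contradicting $\abs{Q_k} \geq 1$, valid for every $k \geq 0$ (the only range in which the statement is not vacuous, since $1 = \abs{\det\gamma} \leq \abs{\gamma}^2$ already gives $\abs{\gamma} \geq 1$).

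I do not expect a genuine obstacle; the argument is short once the right quantity, the polynomial vector $\gamma\,\tp{(P_k,Q_k)}$, is identified. The only point deserving care is the norm bookkeeping that places both coordinates strictly inside the unit ball: it uses the strict inequality $\abs{\gamma} < \abs{Q_{k+1}}$ in the hypothesis in an essential way, and the function-field fact that a nonzero polynomial has $q$-adic norm at least $1$. Note that no use is made of the normalization $\abs{\xi} \leq 1$; the lemma holds for every irrational $\xi$.
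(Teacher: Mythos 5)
Your proof is correct. The paper itself writes out no argument for Lemma \ref{L:lb}: it only remarks that the proof is the same as in \cite{LN12, Sin17}, where the standard route is row-by-row. There one observes that, since $\det\gamma=1$, at most one of the rows $(a,b)$, $(c,d)$ can be a polynomial multiple of $(Q_k,-P_k)$, and a row that is not such a multiple, with $\abs{a}<\abs{Q_{k+1}}$, satisfies $\abs{a\xi+b}\geq 1/\abs{Q_k}$ by the best-approximation property of convergents. Your argument packages that dichotomy into a single computation: assuming both coordinates of $\gamma\x$ have norm $<1/\abs{Q_k}$, the identities $aP_k+bQ_k=Q_k(a\xi+b)-a(Q_k\xi-P_k)$ and $cP_k+dQ_k=Q_k(c\xi+d)-c(Q_k\xi-P_k)$, together with $\abs{Q_k\xi-P_k}=1/\abs{Q_{k+1}}$ and $\abs{a},\abs{c}\leq\abs{\gamma}<\abs{Q_{k+1}}$, force $\gamma\,\tp{(P_k,Q_k)}$ to be a polynomial vector of norm $<1$, hence $\zero$, contradicting the invertibility of $\gamma$ and $Q_k\neq 0$. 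This is arguably cleaner in the function-field setting, since the ultrametric inequality and the fact that a nonzero polynomial has norm at least $1$ replace any appeal to best approximants, and it makes transparent exactly where each hypothesis enters. Your side remarks are also sound: $\abs{\gamma}\geq 1$ for every $\gamma\in\SLtwoFqT$, so only $k\geq 0$ is non-vacuous, and indeed no use of the normalization $\abs{\xi}\leq 1$ is needed here.
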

\noindent The argument here is same as the one used in \cite{LN12, Sin17} except that 
we get tighter bounds owing to the ultrametric inequality. 
Now if $k$ is chosen so as to have $\abs{Q_k} \leq \abs{\gamma} < \abs{Q_{k + 1}}$, 
we immediately get $\mu ( \x, \zero ) \leq 1$. At the same time,
        \begin{equation}
          M_k \x = \begin{pmatrix}
                      Q_k \xi - P_k\\
                     (-1)^{k - 1} ( Q_{k - 1} \xi - P_{k - 1} )
                   \end{pmatrix}
        \end{equation}
        for
        \begin{equation}\label{E:conv}
          M_k := \begin{pmatrix}
                    Q_k & - P_k\\
                   (-1)^{k - 1} Q_{k - 1}  & (-1)^{k} P_{k - 1}
                 \end{pmatrix}
        \end{equation}
        and therefore, $\abs{M_k \x} = \abs{\eps_{k - 1}} = 1 / \abs{Q_k}$ for all $k \in \N$ 
        where $\eps_n := Q_n \xi - P_n$ is the scaled error (with sign) for approximat{i}on by the $n$-th convergent of $\xi$. By our assumpt{i}on that $\abs{\xi} \leq 1$,
         we get $\abs{P_{k - 1}} \leq \abs{Q_{k - 1}},\ \abs{P_k} \leq \abs{Q_k}$ and $\abs{Q_{k - 1}} < \abs{Q_k} = \abs{M_k}$. 
         Thereby, one has $\abs{M_k \x} = 1 / \abs{M_k}$. As $k \rightarrow \infty$, 
         we get inf{i}nitely many such matrices in \SLtwoFqT. The conclusion is
        \begin{equation}
          \mu ( \x, \zero ) = 1.
        \end{equation}
        This also gives us an upper bound for the uniform exponent $\hatmu{\x}{\zero}$. 
        We can actually improve upon this. 
        If $\omega < \omega (\xi)$, 
        it follows from the definition of $\omega(\xi)$ 
         that $\abs{Q_{k + 1}} \geq \abs{Q_k}^{\omega}$ for inf{i}nitely many $k$. 
        Consider $H = \abs{Q_{k + 1}} / q$ for such a $k$ so that for all matrices $\gamma \in \SLtwoFqT$ 
        with $\abs{\gamma} \leq H$,
        \begin{equation}
          \abs{\gamma\x} \geq \frac{1}{\abs{Q_k}} \geq \frac{1}{\abs{Q_{k + 1}}^{1 / \omega}} = \frac{1}{( qH )^{1 / \omega}}
        \end{equation}
        by Lemma~\ref{L:lb}. 
        Said differently, $\hatmu{\x}{\zero}$ can be at most $1 / \omega$. 
        This is strengthened to $1 / \omega (\xi)$ by let{t}ing $\omega \rightarrow \omega (\xi)$ from below. 
        For the lower bound, let $T \gg 1$ be so that there exists some $k \in \N$ for 
        which $\abs{Q_k} = \abs{M_k} \leq T < \abs{M_{k + 1}} = \abs{Q_{k + 1}}$. 
        We then have
        \begin{equation}
          \abs{M_k\x} = \frac{1}{\abs{Q_k}} < \frac{1}{\abs{Q_{k + 1}}^{1 / \omega}} < \frac{1}{T^{1 / \omega}}
        \end{equation}
        for all $k \gg 1$ and $\omega > \omega (\xi)$ from the discussion in \S\,\ref{S:cf}. 
        Let $\omega$ approach $\omega (\xi)$ from the right and we can write down
        \begin{prop}
          Let $\x = \tp{( x_1, x_2 )} \in \Fqtinv^2$ such that $\xi = x_1 / x_2$ is not in $K$. 
                    Then,
          \[
            \mu ( \x, \zero ) = 1\quad\text{and}\quad \hatmu{\x}{\zero} = 1 / \omega (\xi).
          \]
        \end{prop}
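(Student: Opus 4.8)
The plan is to obtain both exponents directly from Lemma~\ref{L:lb} together with the explicit family of \emph{convergent matrices} $M_k$ introduced in~\eqref{E:conv}. After the normalisation $\x = \tp{(\xi,1)}$ with $\abs{\xi} \leq 1$ arranged above, I would first record the elementary facts: $M_k \in \SLtwoFqT$ (since $\det M_k = 1$, using $Q_k P_{k-1} - P_k Q_{k-1} = (-1)^k$); $\abs{M_k} = \abs{Q_k}$, because $\abs{\xi} \leq 1$ forces $\abs{P_j} \leq \abs{Q_j}$ while the $\abs{Q_j}$ are strictly increasing; and $\abs{M_k \x} = \max\{\abs{\eps_k}, \abs{\eps_{k-1}}\} = \abs{\eps_{k-1}} = 1/\abs{Q_k}$, where $\eps_n := Q_n \xi - P_n$ and the middle equality uses~\eqref{E:cferror} together with $\abs{Q_{k+1}} > \abs{Q_k}$. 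In particular $\abs{M_k \x} = \abs{M_k}^{-1}$.

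For the asymptotic exponent, the upper bound $\mu(\x,\zero) \leq 1$ comes straight out of Lemma~\ref{L:lb}: for any $\gamma \in \SLtwoFqT$ with $\abs{\gamma}$ large, pick $k$ with $\abs{Q_k} \leq \abs{\gamma} < \abs{Q_{k+1}}$, so $\abs{\gamma \x} \geq 1/\abs{Q_k} \geq \abs{\gamma}^{-1}$; hence $\abs{\gamma \x} \leq \abs{\gamma}^{-\mu}$ with $\mu > 1$ can hold only for $\abs{\gamma} \leq 1$, i.e.\ finitely often. The matching lower bound is immediate from the first paragraph: the distinct matrices $M_k$ satisfy $\abs{M_k \x} = \abs{M_k}^{-1}$ with $\abs{M_k} \to \infty$, giving infinitely many solutions of $\abs{\gamma \x} \leq \abs{\gamma}^{-1}$, so $\mu(\x,\zero) \geq 1$.

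For the uniform exponent I would use the same two families but now keep track of the gap $\abs{Q_{k+1}}/\abs{Q_k}$ through the description of $\omega(\xi)$ recalled at the end of Section~\ref{S:cf}. \emph{Upper bound:} fix $\omega < \omega(\xi)$ and take the infinitely many $k$ with $\abs{Q_{k+1}} \geq \abs{Q_k}^{\omega}$; setting $H := \abs{Q_{k+1}}/q$ for such $k$, every $\gamma$ with $\abs{\gamma} \leq H < \abs{Q_{k+1}}$ obeys $\abs{\gamma \x} \geq 1/\abs{Q_k} \geq \abs{Q_{k+1}}^{-1/\omega} = (qH)^{-1/\omega}$ by Lemma~\ref{L:lb}; since $H \to \infty$ along this sequence, no exponent strictly larger than $1/\omega$ can work, and letting $\omega \uparrow \omega(\xi)$ yields $\hatmu{\x}{\zero} \leq 1/\omega(\xi)$. \emph{Lower bound:} fix $\omega > \omega(\xi)$, so $\abs{Q_{k+1}} < \abs{Q_k}^{\omega}$ for all large $k$; given $H \gg 0$, choose $k$ with $\abs{Q_k} \leq H < \abs{Q_{k+1}}$, so $\abs{M_k} = \abs{Q_k} \leq H$ and, from $H < \abs{Q_{k+1}} < \abs{Q_k}^{\omega}$, also $\abs{M_k \x} = 1/\abs{Q_k} < H^{-1/\omega}$; thus $M_k$ witnesses the uniform inequality with exponent $1/\omega$, and letting $\omega \downarrow \omega(\xi)$ gives $\hatmu{\x}{\zero} \geq 1/\omega(\xi)$.

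I do not expect a real obstacle here: the proposition is essentially a repackaging of Lemma~\ref{L:lb} and of the convergent-matrix identities, and the only genuine external input is the equivalence between the value of $\omega(\xi)$ and the asymptotics of $\abs{Q_{k+1}}$ relative to powers of $\abs{Q_k}$, which is exactly what the closing discussion of Section~\ref{S:cf} provides. The points needing a little care are the normalisation via $J$ (so that $\abs{\xi}\leq 1$, which is what makes $\abs{M_k}=\abs{Q_k}$) and the harmless constant $q$ appearing when one passes between a threshold $H$ and $\abs{Q_{k+1}} = qH$.
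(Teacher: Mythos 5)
Your argument is correct and follows essentially the same route as the paper: Lemma~\ref{L:lb} for the upper bounds, the convergent matrices $M_k$ with $\abs{M_k\x}=1/\abs{M_k}$ for the lower bounds, and the translation between $\omega(\xi)$ and the growth of $\abs{Q_{k+1}}$ relative to $\abs{Q_k}^{\omega}$ exactly as in the closing discussion of Section~\ref{S:cf}. No substantive difference from the paper's proof.
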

\noindent From~\eqref{E:n1}, 
we see that the two exponents are equal for almost all $\x$ with respect to the Haar measure. 
Our next goal is to bound the size of an \SLtwoFqT\ matrix in terms of convergent 
and upper-triangular matrices. In the sequel, for $a$ in $\FqT$, we set 
$$
U(a) = \begin{pmatrix} 1 & a \\ 0 & 1 \end{pmatrix}. 
$$

        \begin{lem}[cf.~\cite{LN12}]\label{L:gamma}
          Let $k \in \N$  and
          \begin{equation}\label{E:N}
            N = \begin{pmatrix} t & t'\\ s & s' \end{pmatrix}
          \end{equation}
be any matrix in \SLtwoFqT. Then, the product $\gamma = N U(a) M_k$ satisfies 
          \begin{equation}
            \abs{\big( a Q_{k - 1} + ( -1)^{k - 1} Q_k \big) s } - \abs{s'Q_{k - 1}} 
            \leq \abs{\gamma} \leq \abs{N}\cdot\max \big\{ \abs{Q_k}, \abs{aQ_{k - 1}} \big\}.
          \end{equation}
        \end{lem}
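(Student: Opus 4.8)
The plan is to compute the product $\gamma = N U(a) M_k$ explicitly as a $2 \times 2$ matrix over $\FqT$ and then extract the size $\abs{\gamma} = \max_{i,j} \abs{\gamma_{ij}}$ by reading off each entry. First I would note that
\[
  U(a) M_k = \begin{pmatrix} 1 & a\\ 0 & 1 \end{pmatrix}
             \begin{pmatrix} Q_k & -P_k\\ (-1)^{k-1} Q_{k-1} & (-1)^k P_{k-1} \end{pmatrix}
           = \begin{pmatrix} Q_k + (-1)^{k-1} a Q_{k-1} & -P_k + (-1)^k a P_{k-1}\\ (-1)^{k-1} Q_{k-1} & (-1)^k P_{k-1} \end{pmatrix},
\]
so that the first column of $U(a)M_k$ has entries $aQ_{k-1} + (-1)^{k-1}Q_k$ (up to sign) and $(-1)^{k-1}Q_{k-1}$, while the second column is governed by $P_k$ and $P_{k-1}$. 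Since $\abs{\xi} \leq 1$ forces $\abs{P_j} \leq \abs{Q_j}$ for all $j$, the first column dominates the second in norm entrywise, and the largest absolute value appearing in $U(a)M_k$ is exactly $\max\{\abs{Q_k},\abs{aQ_{k-1}}\}$.

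For the upper bound I would then multiply on the left by $N$ and use the ultrametric inequality: every entry of $\gamma = N\,(U(a)M_k)$ is a sum of two products of an entry of $N$ with an entry of $U(a)M_k$, hence has norm at most $\abs{N}\cdot\max\{\abs{Q_k},\abs{aQ_{k-1}}\}$. Taking the maximum over all four entries gives $\abs{\gamma} \leq \abs{N}\cdot\max\{\abs{Q_k},\abs{aQ_{k-1}}\}$, which is the right-hand inequality. For the lower bound I would isolate a single entry of $\gamma$ whose norm is easy to estimate from below, namely the $(2,1)$-entry: it equals $s\big(Q_k + (-1)^{k-1}aQ_{k-1}\big) + s'(-1)^{k-1}Q_{k-1}$, so its norm is at least $\abs{\big(aQ_{k-1} + (-1)^{k-1}Q_k\big)s} - \abs{s'Q_{k-1}}$ by the (reverse) ultrametric inequality. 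Since $\abs{\gamma}$ is at least the norm of any one of its entries, this yields the left-hand inequality.

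The step requiring the most care is bookkeeping the signs $(-1)^{k}$ and $(-1)^{k-1}$ through the matrix product so that the combination $aQ_{k-1} + (-1)^{k-1}Q_k$ emerges with the stated sign pattern, and confirming that the $P$-column never beats the $Q$-column in norm — this is where the normalization $\abs{\xi}\le 1$ (and hence $\abs{P_j}\le\abs{Q_j}$), already recorded just before the lemma, is essential. Everything else is a direct application of the non-archimedean triangle inequality $\abs{x+y}\le\max\{\abs{x},\abs{y}\}$ together with its reverse form $\abs{x+y}\ge\abs{x}-\abs{y}$, so no genuine obstacle arises beyond careful computation.
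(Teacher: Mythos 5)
Your proposal is correct and follows essentially the same route as the paper: compute the product explicitly, extract the lower bound from the bottom-left entry via the reverse ultrametric inequality, and get the upper bound from $\abs{P_n}\leq\abs{Q_n}$ (a consequence of $\abs{\xi}\leq 1$) together with the non-archimedean triangle inequality. The only cosmetic caveat is that the largest entry of $U(a)M_k$ is \emph{at most} (not necessarily exactly) $\max\{\abs{Q_k},\abs{aQ_{k-1}}\}$, since cancellation may occur when $\abs{Q_k}=\abs{aQ_{k-1}}$; this is all the upper bound requires.
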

\begin{proof}
As we have seen before in~\cite{LN12} and \cite{Sin17},
          \begin{align}
            \gamma &= \begin{pmatrix} t & t'\\ s & s' \end{pmatrix}
                      \begin{pmatrix} 1 & a\\ 0 & 1\end{pmatrix}
                      \begin{pmatrix}
                        Q_k & - P_k\\
                        (-1)^{k - 1} Q_{k - 1} & (-1)^{k} P_{k - 1}
                      \end{pmatrix}\\
                   &= \begin{pmatrix}
                        tQ_k + (-1)^{k - 1}Q_{k - 1} ( ta + t' ) & -t P_k + (-1)^{k}P_{k - 1} ( ta + t' )\\
                        sQ_k + (-1)^{k - 1}Q_{k - 1} ( sa + s' ) & -sP_k + (-1)^{k}P_{k - 1} ( sa + s' )
                      \end{pmatrix}.\notag
          \end{align}
The lower bound is given to us by the bot{t}om left entry of this last matrix. Since \abs{\xi} was assumed to be at most $1$ and irrat{i}onal, 
$\abs{P_n} \leq \abs{Q_n} < \abs{Q_{n + 1}}$ for all $n \in \N$. 
This leads to the upper bound.
\end{proof}
Being done with that, we now want an upper bound on the size of the vector $\gamma\x$ itself. 
The statement below is the funct{i}on f{i}eld analogue of~\citep[Lemma~3]{LN12}. 
        \begin{lem}\label{L:diff}
          Let $a, k, N$ and $\gamma = NU(a)M_k =: \begin{pmatrix} V_1 & U_1\\ V_2 & U_2\end{pmatrix}$ be 
          as in Lemma~\ref{L:gamma}. 
          For $y \in \Fqtinv$, def{i}ning $\delta = sy - t$ and $\delta' = s'y - t'$ gives us
          \[
            \abs{V_1\xi + U_1 - y ( V_2\xi + U_2 )} \leq \max \left\{ \abs{\frac{\delta}{Q_{k + 1}}}, \abs{\frac{\delta a + \delta'}{Q_{k}}} \right\}.
          \]
        \end{lem}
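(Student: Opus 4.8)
The plan is to make the left-hand side of the asserted inequality completely explicit in terms of the two scaled errors $\eps_k := Q_k\xi - P_k$ and $\eps_{k-1} := Q_{k-1}\xi - P_{k-1}$, after which the bound falls out of the ultrametric inequality together with~\eqref{E:cferror}. Since the standing assumption of this section is $\x = \tp{(\xi,1)}$, the vector to be controlled is $\gamma\x = \tp{\big(V_1\xi + U_1,\, V_2\xi + U_2\big)}$. I would start from the identity $M_k\x = \tp{\big(\eps_k,\ (-1)^{k-1}\eps_{k-1}\big)}$, which is immediate from the definition~\eqref{E:conv} of $M_k$ and the recurrences~\eqref{E:Qpair}; left multiplication by $U(a)$ changes only the first coordinate, to $\eps_k + (-1)^{k-1}a\,\eps_{k-1}$, and multiplying by $N$ then gives
\[
  \gamma\x = \begin{pmatrix} V_1\xi + U_1 \\ V_2\xi + U_2 \end{pmatrix}
  = \begin{pmatrix} t\,\eps_k + (-1)^{k-1}(ta + t')\,\eps_{k-1} \\ s\,\eps_k + (-1)^{k-1}(sa + s')\,\eps_{k-1} \end{pmatrix},
\]
which is just $\gamma\,\tp{(\xi,1)}$ read off from the explicit form of $\gamma$ recorded in the proof of Lemma~\ref{L:gamma}.

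Next I would form the combination $V_1\xi + U_1 - y(V_2\xi + U_2)$. With $\delta = sy - t$ and $\delta' = s'y - t'$ one has $t - ys = -\delta$ and $(ta + t') - y(sa + s') = a(t - ys) + (t' - ys') = -(a\delta + \delta')$, so the combination telescopes to
\[
  V_1\xi + U_1 - y\,(V_2\xi + U_2) = -\,\delta\,\eps_k \;-\; (-1)^{k-1}(a\delta + \delta')\,\eps_{k-1}.
\]
The ultrametric inequality bounds the absolute value of the right-hand side by $\max\{\,\abs{\delta}\,\abs{\eps_k},\ \abs{a\delta + \delta'}\,\abs{\eps_{k-1}}\,\}$, and substituting $\abs{\eps_k} = \abs{Q_k\xi - P_k} = \abs{Q_{k+1}}^{-1}$ and $\abs{\eps_{k-1}} = \abs{Q_k}^{-1}$ from~\eqref{E:cferror} turns this into $\max\{\,\abs{\delta / Q_{k+1}},\ \abs{(\delta a + \delta') / Q_k}\,\}$, which is exactly the claim.

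I do not anticipate any genuine obstacle: the argument is a single unwinding of the matrix product $NU(a)M_k$ applied to $\tp{(\xi,1)}$, and the only thing that needs care is the sign bookkeeping — keeping the powers $(-1)^{k-1}$ and the orientations of $\delta,\delta'$ consistent so that the combination collapses cleanly to two terms. Since the conclusion only involves absolute values, any sign slip along the way is harmless to the stated estimate. It is worth noting that this same computation simultaneously expresses each coordinate $V_i\xi + U_i$ of $\gamma\x$ in terms of $\eps_k$ and $\eps_{k-1}$; this is precisely the form in which Lemmas~\ref{L:gamma} and~\ref{L:diff} will be combined to estimate $\abs{\gamma\x - \y}$ in the proof of Theorem~\ref{T: exponent for lattice}.
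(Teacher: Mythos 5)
Your proof is correct and is essentially the paper's own argument: the paper packages the same computation as the product $\begin{pmatrix}-1 & y\end{pmatrix}NU(a)M_k\begin{pmatrix}\xi\\ 1\end{pmatrix}$, noting that $\begin{pmatrix}-1 & y\end{pmatrix}N = \begin{pmatrix}\delta & \delta'\end{pmatrix}$ and $M_k\tp{(\xi,1)} = \tp{(\eps_k,(-1)^{k-1}\eps_{k-1})}$, which yields $\delta\eps_k + (-1)^{k-1}(\delta a+\delta')\eps_{k-1}$ — the negative of your expression, so the same bound follows from the ultrametric inequality and $\abs{\eps_n}=1/\abs{Q_{n+1}}$. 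Your coordinate-wise unwinding and sign bookkeeping check out.
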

        \begin{proof}
          We have
          \begin{align}\label{E:diff}
            y ( V_2\xi + U_2 ) - ( V_1\xi + U_1 ) &= \begin{pmatrix} -1 & y\end{pmatrix}\gamma
                                                     \begin{pmatrix} \xi\\ 1\end{pmatrix} =
                                                     \begin{pmatrix} -1 & y\end{pmatrix}
                                                     \begin{pmatrix} t & t'\\ s & s' \end{pmatrix} U(a) M_k
                                                     \begin{pmatrix} \xi\\ 1\end{pmatrix}\notag\\
                                                  &= \begin{pmatrix} \delta & \delta'\end{pmatrix} U(a)
                                                     \begin{pmatrix} \eps_k\\ ( -1 )^{k - 1} \eps_{k - 1} \end{pmatrix}\\
                                                  &= \delta  \eps_k + (-1)^{k - 1}  (\delta a + \delta' ) \eps_{k - 1}.\notag
          \end{align}
          In order to f{i}nish the argument, 
          one only needs to use $\abs{\eps_n} = 1 / \abs{Q_{n + 1}}$ for all $n \in \N$ and the ultrametric property.
        \end{proof}
        Consider any vector $\y = \tp{( y_1, y_2 )} \in \Fqtinv^2$ 
        and let $\tp{( \Lambda_1, \Lambda_2 )}$ denote the di{f}ference $\gamma \x - \y$, i.~e.,
        \begin{equation}\label{E:lambda}
          \Lambda_i = x_2 ( V_i \xi + U_i ) - y_i \text{ for } i \in \{ 1, 2 \},
        \end{equation}
        where we take $\x = \tp{( x_1, x_2 )}$ such that $\abs{\x} = \abs{x_2} > 0$ without loss of generality. 
        In part{i}cular, 
        when we choose $y$ to be the slope $y_1 / y_2$ of our target \y\ (again assume $\abs{y} \leq 1$ using the matrix $J$), 
        Lemma~\ref{L:diff} tells us that
        \begin{align}\label{E:first}
          \abs{\Lambda_1 - y \Lambda_2} &= \abs{x_2 \big( ( V_1\xi + U_1 ) - y ( V_2\xi + U_2 ) \big)}\notag\\
                                        &\leq \abs{x_2}\cdot\max \left\{ \abs{\frac{\delta}{Q_{k + 1}}}, \abs{\frac{\delta a + \delta'}{Q_{k}}} \right\}.
        \end{align}
        The idea is simple. 
        To get a bound on the size of \tp{( \Lambda_1, \Lambda_2 )}, 
        we separately bound each of the component $\Lambda_2$ and the quant{i}ty \abs{\Lambda_1 - y\Lambda_2}. 
        From~\eqref{E:diff}, we deduce that 
        \begin{align}\label{E:Lambda2}
          \Lambda_2 &= x_2 ( V_2 \xi + U_2 ) - y_2 = x_2 \big( s \eps_k + (-1)^{k - 1} ( sa + s' ) \eps_{k - 1} \big) - y_2\notag\\
          &= (-1)^{k - 1}  x_2 s \eps_{k - 1} ( a - \rho ),
        \end{align}
        where
        \begin{equation}\label{E:rho}
          \rho := \frac{(-1)^{k - 1}y_2}{x_2s\eps_{k - 1}} - \frac{(-1)^{k - 1}\eps_k}{\eps_{k - 1}} - \frac{s'}{s}.
        \end{equation}
        The element $\rho \in \Fqtinv$ is the one which decides the value of $a$ for us, 
        namely  we take $a = [ \rho ]$ so that $\abs{a - \rho} \leq 1/q$ and $\abs{a} \leq \abs{\rho}$. 
        Such a choice means $\abs{\Lambda_2} < \abs{ x_2 s / Q_k }$.

\subsection{\protect{Target points with slope in \Fqt}} Let \y\ have slope $y = y_1 / y_2 = A / B \in \Fqt$ such that $A, B \in \FqT,\ \gcd \{ A, B \} \in \F_q^*$ and $\abs{A/B} \leq 1$. Now, assign
      \begin{equation}
        N = \begin{pmatrix} A & \widetilde{A}\\ B & \widetilde{B} \end{pmatrix}
      \end{equation}
      where $\widetilde{A}$ and $\widetilde{B}$ are (upto mult{i}plicat{i}on by $\pm 1$) respect{i}vely the numerators 
      and denominators of the penult{i}mate convergent in the cont{i}nued fract{i}on expansion of $A/B$. 
      This ensures that $\det N = 1$ always and $\abs{\widetilde{A}} \leq \abs{\widetilde{B}} < \abs{B} = \abs{N}$ except 
      when $\widetilde{B} = 0$ and $A, \widetilde{A}, B \in \F_q^*$ giving $\abs{N} = \abs{B} = 1$ again.
\begin{lem}[\protect{cf.~\citep[Lemma~5]{LN12}}]\label{L:rat}
        Let $k \gg 1$ and $\y \in \Fqtinv^2$ with slope $y = A/B \in \Fqt$. 
        Then, there exists $\gamma = N U (a) M_k \in \SLtwoFqT$ sat{i}sfying
        \[
          \abs{\gamma} = \abs{\frac{y_2}{x_2}}\abs{Q_k Q_{k - 1}} < \abs{\frac{y_2}{x_2}}\abs{Q_k}^2 \text{ and } \abs{\gamma\x - \y} \leq \abs{\frac{B x_2}{Q_k}}.
        \]
\end{lem}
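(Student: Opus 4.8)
The plan is to produce an explicit matrix $\gamma = N U(a) M_k$ of the advertised shape and verify the two stated estimates by feeding the right data into Lemmas~\ref{L:gamma} and~\ref{L:diff}. Recall that $N = \begin{pmatrix} A & \widetilde{A}\\ B & \widetilde{B}\end{pmatrix}$ has determinant $1$ and size $\abs{N} = \abs{B}$ (modulo the trivial exceptional case $\widetilde{B}=0$, which only makes $\abs{N}=1$ and is harmless). With this $N$ and target slope $y = A/B$, the quantities $\delta = sy - t$ and $\delta' = s'y - t'$ from Lemma~\ref{L:diff} become $\delta = By - A = 0$ and $\delta' = \widetilde{B}y - \widetilde{A} = (\widetilde B A - \widetilde A B)/B = \pm 1/B$, using $\det N = 1$. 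The single remaining free parameter is $a \in \FqT$, and its choice is dictated by the element $\rho$ of~\eqref{E:rho}: set $a = [\rho]$ so that $\abs{a - \rho} \le 1/q$ and $\abs{a}\le\abs{\rho}$, which by~\eqref{E:Lambda2} already gives $\abs{\Lambda_2} < \abs{x_2 s/Q_k} = \abs{x_2 B/Q_k}$.

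Next I would bound $\abs{\Lambda_1 - y\Lambda_2}$ via~\eqref{E:first}. Since $\delta = 0$, that display collapses to
\[
  \abs{\Lambda_1 - y\Lambda_2} \le \abs{x_2}\cdot\abs{\frac{\delta'}{Q_k}} = \abs{\frac{x_2}{B Q_k}} \le \abs{\frac{B x_2}{Q_k}},
\]
the last inequality being trivial since $\abs{B}\ge 1$. Combining this with the bound on $\abs{\Lambda_2}$ and the ultrametric inequality $\abs{\Lambda_1}\le\max\{\abs{\Lambda_1 - y\Lambda_2},\abs{y}\abs{\Lambda_2}\}$ (recall $\abs{y}\le 1$) yields $\abs{\gamma\x - \y} = \max\{\abs{\Lambda_1},\abs{\Lambda_2}\} \le \abs{B x_2/Q_k}$, which is the second claimed estimate.

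For the size of $\gamma$ itself, I would return to Lemma~\ref{L:gamma} with $s = B$, $s' = \widetilde B$, $N$ as above. The lower bound there reads $\abs{(aQ_{k-1} + (-1)^{k-1}Q_k)B} - \abs{\widetilde B Q_{k-1}}$, and the upper bound is $\abs{B}\cdot\max\{\abs{Q_k},\abs{aQ_{k-1}}\}$. The key point is to pin down $\abs{a}$. From the definition of $\rho$ in~\eqref{E:rho}, the dominant term for large $k$ is $(-1)^{k-1}y_2/(x_2 s \eps_{k-1}) = y_2/(x_2 B \eps_{k-1})$: indeed $\abs{\eps_k/\eps_{k-1}} = \abs{Q_k/Q_{k+1}} < 1$ and $\abs{s'/s} = \abs{\widetilde B/B} < 1$, while $\abs{1/\eps_{k-1}} = \abs{Q_k}\to\infty$; hence for $k\gg 1$ we get $\abs{\rho} = \abs{y_2/(x_2 B \eps_{k-1})} = \abs{y_2 Q_k/(x_2 B)}$, and therefore $\abs{a} = \abs{\rho}$ as well since $\abs{\rho}>1$. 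Then $\abs{aQ_{k-1}} = \abs{y_2 Q_k Q_{k-1}/(x_2 B)}$, which dominates $\abs{Q_k}$ for $k$ large (again because $\abs{Q_{k-1}}\to\infty$). Plugging into the upper bound gives $\abs{\gamma}\le\abs{B}\abs{aQ_{k-1}} = \abs{y_2/x_2}\abs{Q_k Q_{k-1}}$; for the lower bound, the leading term $\abs{aQ_{k-1}B}$ equals exactly $\abs{y_2/x_2}\abs{Q_k Q_{k-1}}$ and the subtracted term $\abs{\widetilde B Q_{k-1}} \le \abs{B Q_{k-1}}$ is of strictly smaller order, so the ultrametric equality forces $\abs{\gamma} = \abs{y_2/x_2}\abs{Q_k Q_{k-1}}$. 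Finally $\abs{Q_{k-1}} < \abs{Q_k}$ delivers the strict inequality $\abs{\gamma} < \abs{y_2/x_2}\abs{Q_k}^2$. The main obstacle is the bookkeeping in the last paragraph: one must be careful that the $U^2$- and $V^2$-type cross terms hidden inside the matrix entries of $NU(a)M_k$ really are of lower order than the leading terms, and that the approximation $\abs{a}\approx\abs{\rho}$ is valid uniformly once $k$ exceeds an explicit threshold depending only on $\x$, $\y$ and the continued fraction of $\xi$; everything else is a routine ultrametric estimate.
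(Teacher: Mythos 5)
Your proposal is correct and follows essentially the same route as the paper: compute $\delta=0$ and $\delta'=\pm 1/B$, bound $\abs{\Lambda_1-y\Lambda_2}$ and $\abs{\Lambda_2}$ via Lemma~\ref{L:diff} and \eqref{E:Lambda2}, and identify $\abs{a}=\abs{\rho}=\abs{y_2Q_k/(x_2B)}$ for $k\gg 1$ before feeding this into Lemma~\ref{L:gamma}. The paper's own proof stops short of spelling out the final $\abs{\gamma}$ computation, which you carry out correctly with the same ultrametric dominance argument.
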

      \begin{proof}
        In this case, $\delta = By - A = 0$ and $\delta' = \widetilde{B}y - \widetilde{A} = 1/B$ so that
        \begin{equation}
          \abs{\Lambda_1 - y \Lambda_2} \leq \frac{\abs{x_2}}{\abs{B Q_k}} 
          \text{ and } \abs{\Lambda_2} \leq \frac{\abs{x_2 B}}{q \abs{Q_k}}.
        \end{equation}
        Af{t}er this, the ultrametric inequality gives the upper bound for \abs{\gamma\x - \y}. Next,
        \begin{equation}
          \abs{a} = \abs{\rho} = \abs{\frac{y_2 Q_k}{x_2 B}} > 1
        \end{equation}
        for all $k$ large enough as $\abs{\eps_k / \eps_{k - 1}}, 
        \abs{\widetilde{B} / B} < 1$ whereas the norm of the f{i}rst term on the right side of~\eqref{E:rho} increases with $k$. 
        We can now improve upon Lemma~\ref{L:gamma} to have more precise knowledge about the size of $\gamma$.
      \end{proof}
\noindent For any such matrix $\gamma$, one thus gets
      \begin{equation}
        \abs{\gamma\x - \y} \leq \abs{\frac{B x_2}{Q_k}} < \abs{B}\abs{x_2 y_2}^{1/2} \abs{\gamma}^{- 1/2}.
      \end{equation}
      We should, however, 
      be more careful when discussing the asymptot{i}c exponent $\mu ( \x, \y )$. Let $\omega < \omega (\xi)$ 
      so that $\abs{Q_{k - 1}} \leq \abs{Q_k}^{1 / \omega}$ for inf{i}nitely many $k$ 
      from the observat{i}ons following~\eqref{E:groQn}. 
      Then, $\abs{\gamma} \ll_{\x, \y} \abs{Q_k}^{1 + \frac{1}{\omega}}$ 
      and $\abs{\gamma\x - \y} \ll \abs{\gamma}^{- \frac{\omega}{\omega + 1}}$ for all such $k$'s. 
      This implies that $\mu ( \x, \y ) \geq \omega / ( \omega + 1 )$ f{i}rstly and 
      since $\omega$ can be taken arbitrarily close to $\omega (\xi)$, we have
      \begin{equation}\label{E:ratmu}
        \mu ( \x, \y ) \geq \frac{\omega (\xi)}{\omega (\xi) + 1}
      \end{equation}
      as in the real case~\citep[\S\S~6.2]{LN12}.\\[-0.1cm]

      Let us move ahead to obtain a similar bound for $\hatmu{\x}{\y}$. 
      Our claim is that it is at least $1 / \big( \omega (\xi) + 1 \big)$. 
      The statement is trivial for $\omega (\xi) = \infty$, thus, we assume that 
      $\omega(\xi)$ is finite. 
      For any $T \gg 1$, there is a unique index $k$ for which
      \begin{equation}
        \abs{\frac{y_2}{x_2}}\abs{Q_{k - 1}Q_k} \leq T < \abs{\frac{y_2}{x_2}}\abs{Q_{k}Q_{k + 1}}.
      \end{equation}
      Then,  $T < \abs{Q_k}^{\omega + 1}$ for given $\omega > \omega (\xi)$ 
      and all $k$ large enough, while Lemma~\ref{L:rat} tells us that 
      there exists a $\gamma = N U(a) M_k \in \SLtwoFqT$ with $\abs{\gamma} \leq T$ 
      and $\abs{\gamma\x - \y} \leq \abs{B x_2} T^{- 1 / ( \omega + 1 )}$. 
      Since this is true for all $T$ large enough and $\omega$ was arbitrary, we conclude that 
      \begin{equation}\label{E:rathatmu}
        \hatmu{\x}{\y} \geq \frac{1}{\omega (\xi) + 1}, 
      \end{equation}
      where $\xi = x_1 / x_2$ is the slope of the start{i}ng point \x.\\[-0.1cm]

Our next goal is to show that the inequali{t}ies~\eqref{E:ratmu} and \eqref{E:rathatmu} are actually equali{t}ies 
when the target \y\ has slope in \Fqt. 
We start with $\mu ( \x, \y )$. 
The trick is to break down any matrix $\gamma$ using the various convergent matrices for $\xi$ and $y$, 
which are already familiar to us from~\eqref{E:conv} (see also~\citep[Theorem~4]{LN12}). 
The theorem given below is an inhomogeneous version of Lemma~\ref{L:lb} for rat{i}onal target points.
\begin{theorem}\label{Th:gap}
Let $\x, \y \in \Fqtinv^2$ be such that $x_1 / x_2 \in \Fqtinv \setminus \Fqt$ and $y_1 / y_2 \in K$.  
For any $k$ such that $\abs{Q_k} > \abs{B x_2 / y_2}$ and $\gamma \in \SLtwoFqT$ 
with $\abs{\gamma} < \abs{y_2 Q_k Q_{k + 1} / x_2}$, 
we must have an approximat{i}on error $\abs{\gamma\x - \y} \geq \abs{x_2 / ( B Q_k )}$.
\end{theorem}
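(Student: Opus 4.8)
The plan is to exploit the decomposition principle that any $\gamma \in \SLtwoFqT$ can be written as $\gamma = N U(a) M_k$ for a suitable choice of $N$, $a$, and the convergent matrix $M_k$ to $\xi$ (the same device used in Lemmas \ref{L:gamma} and \ref{L:diff}). Since $y = y_1/y_2 = A/B \in K$, the natural choice of $N$ is the matrix $\begin{pmatrix} A & \widetilde{A}\\ B & \widetilde{B}\end{pmatrix}$ built from the penultimate convergent of $A/B$, exactly as in Lemma \ref{L:rat}. Given an arbitrary $\gamma \in \SLtwoFqT$ with $\abs{\gamma} < \abs{y_2 Q_k Q_{k+1}/x_2}$, I would first argue that $\gamma$ admits such a factorization $\gamma = N U(a) M_k$ with the index $k$ as in the hypothesis; the polynomial $a$ and the remaining factors are then determined, and one needs to track how large $a$ and the entries $s, s', \delta, \delta'$ can be under the size constraint on $\abs{\gamma}$.

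The core of the argument is a lower bound on $\abs{\gamma\x - \y}$ via the two quantities $\Lambda_2$ and $\abs{\Lambda_1 - y\Lambda_2}$ already isolated in \eqref{E:Lambda2} and \eqref{E:first}. Since $y = A/B$ is rational, we have $\delta = By - A = 0$ and $\delta' = \widetilde{B}y - \widetilde{A} = 1/B$, so \eqref{E:diff} collapses to
\begin{equation}
  \Lambda_1 - y\Lambda_2 = x_2\big(\delta \eps_k + (-1)^{k-1}(\delta a + \delta')\eps_{k-1}\big) = (-1)^{k-1}\frac{x_2}{B}\eps_{k-1},
\end{equation}
whose norm is exactly $\abs{x_2/(BQ_k)}$ using $\abs{\eps_{k-1}} = 1/\abs{Q_k}$. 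The point is that this term is \emph{forced}: it does not depend on $a$ or on the freedom in $\gamma$. Thus by the ultrametric inequality, $\abs{\gamma\x - \y} = \max\{\abs{\Lambda_1 - y\Lambda_2 + y\Lambda_2 - \Lambda_2 \cdot 0}, \ldots\}$; more carefully, $\abs{\Lambda_1 - y\Lambda_2} = \abs{x_2/(BQ_k)}$ while a case analysis on whether $\abs{\Lambda_2}$ is larger or smaller than $\abs{x_2/(BQ_k)}$ gives the bound: if $\abs{\Lambda_2} \leq \abs{x_2/(BQ_k)}$ then $\abs{\Lambda_1} \geq \abs{x_2/(BQ_k)}$ (since $\abs{\Lambda_1} = \abs{(\Lambda_1 - y\Lambda_2) + y\Lambda_2}$ and $\abs{y}\le 1$), whereas if $\abs{\Lambda_2} > \abs{x_2/(BQ_k)}$ then already $\abs{\gamma\x - \y} \geq \abs{\Lambda_2} > \abs{x_2/(BQ_k)}$. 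Either way $\abs{\gamma\x - \y} \geq \abs{x_2/(BQ_k)}$, which is the claim.

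The main obstacle I anticipate is not the estimate above — that is essentially forced by the ultrametric inequality once the factorization is in place — but rather justifying that the given index $k$ in the factorization $\gamma = NU(a)M_k$ is indeed the one satisfying the two hypotheses $\abs{Q_k} > \abs{Bx_2/y_2}$ and $\abs{\gamma} < \abs{y_2 Q_k Q_{k+1}/x_2}$ simultaneously, i.e. that the size constraint on $\gamma$ is compatible with, and actually pins down, the correct $k$. One must verify, using the upper bound in Lemma \ref{L:gamma} together with $\abs{N} \asymp \abs{B}$, that $\abs{\gamma} < \abs{y_2 Q_k Q_{k+1}/x_2}$ prevents the convergent index appearing in a valid decomposition of $\gamma$ from exceeding $k$; this is where the hypothesis $\abs{Q_k} > \abs{Bx_2/y_2}$ enters, ensuring that the "rational denominator" $B$ does not interfere with the size comparison. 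Once this bookkeeping is settled, combining it with the forced-term computation finishes the proof.
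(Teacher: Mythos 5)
Your proposal has a genuine gap at its foundation: the factorization $\gamma = N U(a) M_k$ that your whole argument rests on does \emph{not} exist for a general $\gamma \in \SLtwoFqT$. The set of matrices of the form $N U(a) M_k$ (with $N$ and $M_k$ fixed and $a$ ranging over $\FqT$) is a one-parameter family, whereas the theorem must hold for \emph{every} $\gamma$ with $\abs{\gamma} < \abs{y_2 Q_k Q_{k + 1} / x_2}$; almost none of these admit such a decomposition, so the ``forced term'' $\Lambda_1 - y\Lambda_2 = \pm x_2 \eps_{k-1}/B$ is not forced at all for a general $\gamma$. You flag the existence of the factorization as ``bookkeeping,'' but it is precisely the crux, and it cannot be repaired as stated: the paper's proof shows that even under the contradiction hypothesis the matrix $G := N^{-1}\gamma M_k^{-1}$ is \emph{not} of the form $U(a)$ but rather of the form $\begin{pmatrix} R & \zeta\\ -\zeta^{-1} & 0 \end{pmatrix}$ with $\zeta \in \F_q^*$ (lower-right entry zero, lower-left a unit), which is $U(a)$ times a Weyl element, not a unipotent.

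The paper's actual route is by contradiction and runs in the opposite direction from yours. Assuming $\abs{\gamma\x - \y} < \abs{x_2/(BQ_k)}$, one bounds the entries of $\widetilde{\gamma} = N^{-1}\gamma$ (using the determinant identity behind~\eqref{E:Det} and the hypothesis $\abs{Q_k} > \abs{Bx_2/y_2}$) to show that the lower-right entry of $G = \widetilde{\gamma}M_k^{-1}$ is a polynomial of norm strictly less than $1$, hence zero. The resulting rigid shape of $G$ then forces $\abs{R} = \abs{y_2 Q_{k+1}/(x_2 B)}$ and in turn $\abs{\gamma} \geq \abs{y_2 Q_k Q_{k+1}/x_2}$, contradicting the size hypothesis on $\gamma$. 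Your ultrametric case analysis on $\Lambda_2$ versus $\abs{\Lambda_1 - y\Lambda_2}$ is fine as far as it goes, but it only establishes the lower bound for the thin family $N U(a) M_k$ and so does not prove the theorem.
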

      \begin{proof}
      Let us assume that $\gamma = \begin{pmatrix} V_1 & U_1\\ V_2 & U_2\end{pmatrix}$ is 
        such that $\tp{( \Lambda_1, \Lambda_2 )} = \gamma\x - \y$ sat{i}sf{i}es $\abs{\gamma\x - \y} < \abs{x_2 / ( B Q_k )}$ and we will reach a contradiction. 
        Denote
        \begin{align}
          \widetilde{\gamma} &= \begin{pmatrix}
                              \widetilde{V}_1 & \widetilde{U}_1\\
                              \widetilde{V}_2 & \widetilde{U}_2
                            \end{pmatrix} := N^{-1}\gamma \notag\\
                         &= \begin{pmatrix}
                              \frac{\widetilde{B} ( V_1 y_2 - V_2 y_1 )}{y_2} + \frac{V_2}{B} & \frac{\widetilde{B} ( U_1 y_2 - U_2 y_1 )}{y_2} + \frac{U_2}{B}\\
                              - \frac{B ( V_1 y_2 - V_2 y_1 )}{y_2} & -\frac{B ( U_1 y_2 - U_2 y_1 )}{y_2}
                            \end{pmatrix}
        \end{align}
        so that
        \begin{equation}
          \widetilde{\gamma}\x = x_2 \begin{bmatrix}
                                       \widetilde{V}_1 \xi + \widetilde{U}_1\\
                                       \widetilde{V}_2 \xi + \widetilde{U}_2
                                     \end{bmatrix}
                               = N^{-1} \left( \y + \begin{bmatrix} \Lambda_1\\ \Lambda_2 \end{bmatrix} \right) = \begin{bmatrix}
                                                          \frac{y_2}{B} + \widetilde{B}\Lambda_1 - \widetilde{A}\Lambda_2\\
                                                          -B \Lambda_1 + A\Lambda_2
                                                        \end{bmatrix}
        \end{equation}
        by virtue of the fact that $\widetilde{B}y_1 - \widetilde{A}y_2$ equals $y_2 / B$. Then, the determinant
        \begin{equation}
          \begin{vmatrix} V_1 & y_1\\ V_2 & y_2 \end{vmatrix} =
          \begin{vmatrix}
            V_1 & ( V_1 \xi + U_1 )x_2 - \Lambda_1\\
            V_2 & ( V_2 \xi + U_2 )x_2 - \Lambda_2
          \end{vmatrix} = x_2 - \begin{vmatrix}
                                  V_1 & \Lambda_1\\
                                  V_2 & \Lambda_2
                                \end{vmatrix}
        \end{equation}
        and its norm satisfies 
        \begin{equation}\label{E:Det}
          \abs{V_1 y_2 - V_2 y_1} \leq \max \{ \abs{x_2}, \abs{\gamma}\cdot\abs{\gamma\x - \y} \} \leq \max \{ \abs{x_2}, \abs{y_2 Q_{k + 1} / B} \}.
        \end{equation}
        The second of these terms in the upper bound will clearly dominate when $\abs{Q_k} > \abs{Bx_2 / y_2}$. 
        Because of our supposi{t}ion, one gets for all such $k$ that
        \begin{align}
          \abs{\widetilde{V}_2}                       &= \abs{\frac{B}{y_2} ( V_1 y_2 - V_2 y_1 )} < \abs{Q_{k + 1}}, \text{ and}\notag\\
          \abs{\widetilde{V}_2 \xi + \widetilde{U}_2} &= \frac{1}{\abs{x_2}} \abs{B\Lambda_1 - A \Lambda_2} \leq \abs{\frac{B}{x_2}} \abs{\gamma\x - \y} < \frac{1}{\abs{Q_k}}.
        \end{align}
        We are now ready to consider the matrix $N^{-1}\gamma M_k^{-1} = \widetilde{\gamma}M_k^{-1}$ and more speci{f}ically, 
        tackle its lower right entry given by $\widetilde{V}_2 P_k + Q_k \widetilde{U}_2$. Its size is bounded as follows:
        \begin{equation}\label{E:zero}
          \abs{\widetilde{V}_2 P_k + Q_k \widetilde{U}_2} = \abs{- \widetilde{V}_2 ( Q_k \xi - P_k ) + Q_k ( \widetilde{V}_2 \xi + \widetilde{U}_2 ) } < 1
        \end{equation}
        Since all the three matrices $N, \gamma$ and $M_k$ have polynomial entries and determinant $1$, 
        the entry in~\eqref{E:zero} must be zero. 
        Consequently, $G := N^{-1}\gamma M_k^{-1}$ is of the form
        \begin{equation}
          \begin{pmatrix}
            R & \zeta\\
            -\zeta^{-1} & 0
          \end{pmatrix}
        \end{equation}
        for some $R \in \FqT$ and $\zeta \in \F_q^*$ but that would mean
        \begin{equation}
          \widetilde{\gamma}\x = GM_k \x = x_2 \begin{bmatrix} R\eps_k + ( -1 )^{k - 1} \zeta \eps_{k - 1}\\ - \zeta^{-1} \eps_{k} \end{bmatrix} = \begin{bmatrix}
                            \frac{y_2}{B} + \widetilde{B}\Lambda_1 - \widetilde{A}\Lambda_2\\
                            -B \Lambda_1 + A\Lambda_2
                          \end{bmatrix}.
        \end{equation}
        Let us focus on the second coordinate of this column vector. 
        In the last representat{i}on, 
        $\big|\,B\Lambda_1 - A\Lambda_2\,\big| < \abs{B}\abs{\gamma\x - \y} < \abs{x_2 / Q_k} < \abs{y_2 / B}$ owing to our hypothesis about $k$. Thus, $\widetilde{\gamma}\x$ has norm equal to $\abs{y_2 / B}$ and this in turn forces $\abs{R} = \abs{ y_2 Q_{k + 1} / ( x_2 B )}$. We use this knowledge to get a lower bound for $\abs{\gamma}$ as
        \begin{equation}
          \gamma = NGM_k = \begin{pmatrix}
                             * & *\\
                             BRQ_k + ( -1 )^{k - 1}\zeta BQ_{k - 1} - \zeta^{-1}\widetilde{B}Q_{k} & *
                           \end{pmatrix}
        \end{equation}
        implies that the lef{t}most summand of the lower lef{t} entry rises much faster in size with $k$ 
        than the other two terms whence $\abs{\gamma} \geq \abs{y_2 Q_k Q_{k + 1} / x_2}$, 
        which is a contradict{i}on. The conclusion 
        is that $\abs{\gamma\x - \y}$ has to be at least \abs{x_2 / ( BQ_k )}.
      \end{proof}
Given any $\gamma \in \SLtwoFqT$ of suf{f}iciently large norm, we can f{i}nd a unique $k$ such that
      \begin{equation}
        \abs{\frac{y_2}{x_2}}\abs{Q_{k - 1}Q_k} \leq \abs{\gamma} < \abs{\frac{y_2}{x_2}}\abs{Q_{k}Q_{k + 1}}.
      \end{equation}
      If $\omega (\xi)$ is f{i}nite, 
      choose any $\omega > \omega (\xi)$ so that $\abs{Q_{k - 1}} \geq \abs{Q_k}^{1 / \omega}$ eventually. 
      Then, Theorem~\ref{Th:gap} tells us
      \begin{equation}
        \abs{\gamma\x - \y} \geq \frac{\abs{x_2}}{\abs{BQ_k}} \gg_{\x, \y} \frac{1}{\abs{\gamma}^{\omega / ( \omega + 1 )}}
      \end{equation}
      for all such $\gamma$ and on let{t}ing $\omega$ tend to  $\omega (\xi)$, we have
      \begin{equation}
        \mu ( \x, \y ) \leq \frac{\omega (\xi)}{\omega (\xi) + 1}.
      \end{equation}
When $\omega (\xi) = \infty$, 
we instead argue that $\abs{\gamma\x - \y} \gg_{\x, \y} 1 / \abs{\gamma}$ 
so that $\mu ( \x, \y ) \leq 1 = ( 1 + 1 / \omega (\xi) )^{-1}$ again.\\[-0.1cm]

For the uniform exponent, 
let us f{i}x some $\omega < \omega (\xi)$ 
which means there are inf{i}nitely many denominators $\abs{Q_{k + 1}} \geq \abs{Q_k}^{\omega}$. 
Now, consider the diverging subsequence $H_k := q^{-1} \abs{y_2 Q_k Q_{k + 1} / x_2}$ corresponding to these indices $k$. 
In this scenario, $H_k \geq q^{-1} \abs{y_2 / x_2} \abs{Q_k}^{\omega + 1}$ and Theorem~\ref{Th:gap} is saying
      \begin{equation}
        \abs{\gamma\x - \y} \gg_{\x, \y} \frac{1}{\abs{Q_k}} \gg_{\x, \y} \frac{1}{H_k^{1 / ( \omega + 1 )}}
      \end{equation}
for all $\gamma$ with $\abs{\gamma} \leq H_k$. 
This means that $\hatmu{\x}{\y}$ can be at most $1 / ( \omega + 1 )$ 
and as our choice of $\omega < \omega (\xi)$ was arbitrary, 
we reach  (2) of Theorem \ref{T: exponent for lattice}.

\subsection{Target points with irrat{i}onal slopes}
\label{SSec:irr} 
Let us now start the last case when $\y = \tp{( y_1, y_2 )} \in \Fqtinv^2 \setminus \{ \zero \}$ is 
such that $y = y_1 / y_2 \in \Fqtinv \setminus \Fqt$. 
We further take $\abs{y} \leq 1$ using the matrix $J$. 
This const{i}tutes the generic situat{i}on as far as the target points are concerned. 
If $R_{j - 1} / S_{j - 1}$ and $R_j / S_j$ are any consecut{i}ve cont{i}nued fract{i}on convergents to $y$, 
we take $N_j := \begin{pmatrix} R_j & \widetilde{R}_{j - 1}\\ S_j & \widetilde{S}_{j - 1} \end{pmatrix} \in \SLtwoFqT$, 
where
    \begin{equation}
      \widetilde{R}_{j - 1} = ( -1 )^{j - 1} R_{j - 1} \text{ and } \widetilde{S}_{j - 1} = ( -1 )^{j - 1} S_{j - 1}.
    \end{equation}
Then, $\abs{N_j} = \abs{S_j}$ since that entry dominates all others and the term $\rho$ from~\eqref{E:rho} has absolute value
    \begin{equation}
      \abs{\frac{y_2Q_k}{x_2S_j}} - 1\ < \abs{\rho}\ \leq\ \max \left\{ \abs{\frac{y_2Q_k}{x_2S_j}}, 1 \right\}.
    \end{equation}
The polynomial part $a = [ \rho ]$ has the same norm as $\rho$ whenever $\abs{\rho} \geq 1$ and equals $0$ otherwise. 
    Now, $\abs{\rho} \geq 1$ if and only if 
    $\abs{y_2 Q_k / x_2 S_j} \geq 1$ and therefore, we are right to assert
    \begin{equation}\label{E:a}
      \abs{\frac{y_2Q_k}{x_2S_j}} - 1\ <\ \abs{a}\ \leq\ \abs{\frac{y_2Q_k}{x_2S_j}}.
    \end{equation}
\begin{lem}[\protect{cf.~\cite[Lemma~4]{LN12}}]\label{L:irrjk}
For all $j, k \in \N^*$ and $\x, \y$ as above, 
there exists $\gamma = N_j U (a) M_k \in \SLtwoFqT$ for some $a \in \FqT$ such that
      \[
        \abs{\abs{\frac{y_2 Q_k Q_{k - 1}}{x_2}} - \abs{S_j Q_k}} - \abs{S_{j - 1}Q_{k - 1}}\ 
        \leq\ \abs{\gamma}\ \leq\ \max \big\{ \abs{S_j Q_k}, \abs{\frac{y_2 Q_k Q_{k - 1}}{x_2}} \big\}
      \]
      and
      \[
        \abs{\gamma\x - \y} \leq \max \big\{ \abs{\frac{y_2}{S_j S_{j + 1}}}, \abs{\frac{x_2 S_j}{Q_k}} \big\}.
      \]
    \end{lem}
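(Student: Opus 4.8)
The plan is to follow the proof of the real analogue in~\cite[Lemma~4]{LN12}, replacing each Archimedean estimate by its ultrametric sharpening. I take $a := [\rho]$ with $\rho$ as in~\eqref{E:rho} and put $\gamma := N_j\,U(a)\,M_k$; since $\det N_j = 1$ and $U(a), M_k \in \SLtwoFqT$, the matrix $\gamma$ lies in $\SLtwoFqT$.

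\emph{Size of $\gamma$.} First I would apply Lemma~\ref{L:gamma} with $N = N_j$, so that $s = S_j$, $s' = \widetilde{S}_{j-1}$, $\abs{N} = \abs{S_j}$ and $\abs{s'} = \abs{S_{j-1}}$. Its upper bound reads $\abs{\gamma} \leq \max\{\abs{S_j Q_k}, \abs{a S_j Q_{k-1}}\}$, and~\eqref{E:a} gives $\abs{a S_j Q_{k-1}} \leq \abs{y_2 Q_k Q_{k-1}/x_2}$, which is the claimed upper estimate. Its lower bound is $\abs{\gamma} \geq \abs{(aQ_{k-1} + (-1)^{k-1}Q_k)S_j} - \abs{S_{j-1}Q_{k-1}}$; the ultrametric reverse triangle inequality rewrites the first summand as $\abs{S_j}\cdot\abs{\abs{aQ_{k-1}} - \abs{Q_k}}$, and in the regime $\abs{\rho} > 1$, where~\eqref{E:a} forces $\abs{aS_jQ_{k-1}} = \abs{y_2 Q_k Q_{k-1}/x_2}$, this is at least $\abs{\abs{y_2 Q_k Q_{k-1}/x_2} - \abs{S_j Q_k}}$, giving the asserted lower bound. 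In the remaining cases ($\abs{\rho} \leq 1$, or the coincidence $\abs{aQ_{k-1}} = \abs{Q_k}$ producing cancellation) one checks directly, using $\abs{S_{j-1}} < \abs{S_j}$ and $\abs{a S_j Q_{k-1}} \leq \abs{y_2 Q_k Q_{k-1}/x_2}$, that the claimed inequality still follows from Lemma~\ref{L:gamma}.

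\emph{Size of $\gamma\x - \y$.} Next I would write $\gamma\x - \y = \tp{(\Lambda_1, \Lambda_2)}$ as in~\eqref{E:lambda} and bound the coordinates separately. For $\Lambda_2$, formula~\eqref{E:Lambda2}, the relation $\abs{a - \rho} \leq 1/q$ coming from $a = [\rho]$, and $\abs{\eps_{k-1}} = 1/\abs{Q_k}$ yield $\abs{\Lambda_2} \leq \abs{x_2 S_j}/(q\abs{Q_k}) < \abs{x_2 S_j/Q_k}$. For $\abs{\Lambda_1 - y\Lambda_2}$ I would invoke Lemma~\ref{L:diff} in the form~\eqref{E:first} with $\delta = S_j y - R_j$ and $\delta' = \widetilde{S}_{j-1}y - \widetilde{R}_{j-1}$; applying~\eqref{E:cferror} to the continued fraction of $y$ gives $\abs{\delta} = 1/\abs{S_{j+1}}$ and $\abs{\delta'} = 1/\abs{S_j}$, and together with $\abs{a} \leq \abs{y_2 Q_k/(x_2 S_j)}$ this produces
\[
  \abs{\Lambda_1 - y\Lambda_2} \leq \abs{x_2}\max\left\{\frac{1}{\abs{S_{j+1}Q_{k+1}}},\ \abs{\frac{y_2}{x_2 S_j S_{j+1}}},\ \frac{1}{\abs{S_j Q_k}}\right\}.
\]
Because $\abs{Q_k} < \abs{Q_{k+1}}$ and $\abs{S_j} \geq 1$, the first and third entries of this maximum are each at most $\abs{x_2 S_j/Q_k}$, while the middle one equals $\abs{y_2/(S_j S_{j+1})}$. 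Since $\abs{y} \leq 1$ one has $\abs{\gamma\x - \y} = \max\{\abs{\Lambda_1}, \abs{\Lambda_2}\} \leq \max\{\abs{\Lambda_1 - y\Lambda_2}, \abs{\Lambda_2}\}$, and combining the bounds above yields $\abs{\gamma\x - \y} \leq \max\{\abs{y_2/(S_j S_{j+1})}, \abs{x_2 S_j/Q_k}\}$.

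\emph{Main obstacle.} The one genuinely delicate point is the lower bound for $\abs{\gamma}$: when $\abs{aQ_{k-1}}$ coincides with $\abs{Q_k}$ there is potential cancellation in the bottom-left entry of $\gamma$, so one must separate cases according to the size of $\rho$ and check that the real absolute value $\abs{\abs{y_2 Q_k Q_{k-1}/x_2} - \abs{S_j Q_k}}$ appearing in the statement is robust against it (and against the boundary behaviour of~\eqref{E:a} when $\abs{\rho}$ is close to $1$). Everything else is a routine pass through the ultrametric inequality once the substitutions $N = N_j$ and $a = [\rho]$ have been made.
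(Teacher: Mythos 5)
Your proposal is correct and follows essentially the same route as the paper: the size bounds on $\abs{\gamma}$ come from Lemma~\ref{L:gamma} with $N = N_j$ combined with~\eqref{E:a}, and the error bound comes from~\eqref{E:Lambda2} together with Lemma~\ref{L:diff}/\eqref{E:first} after computing $\abs{\delta} = 1/\abs{S_{j+1}}$ and $\abs{\delta'} = 1/\abs{S_j}$. Your case analysis for the lower bound on $\abs{\gamma}$ is in fact more explicit than the paper's one-line justification.
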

    \begin{proof}
The quant{i}t{i}es $\delta$ and $\delta'$ introduced in the statement of Lemma~\ref{L:diff} respect{i}vely equal 
$1 / \abs{S_{j + 1}}$ and $1 / \abs{S_j}$ here. 
Also, the second component $\Lambda_2$ of our error vector $\gamma\x - \y$ is bounded above as
      \begin{equation}
        \abs{\Lambda_2} < \abs{\frac{x_2 S_j}{Q_k}}
      \end{equation}
by~\eqref{E:Lambda2}. 
The reasoning for the f{i}rst component is that f{i}rstly,
      \begin{equation}
        \abs{\Lambda_1} \leq \max \{ \abs{\Lambda_1 - y \Lambda_2}, \abs{y\Lambda_2} \} \leq \max \{ \abs{\Lambda_1 - y \Lambda_2}, \abs{\Lambda_2} \}, 
      \end{equation}
while $\abs{\Lambda_1 - y \Lambda_2} \leq \max \{ \abs{y_2}\abs{S_j S_{j + 1}}^{-1}, \abs{x_2}\abs{S_j Q_k}^{-1} \}$ 
from~\eqref{E:first} and \eqref{E:a}. 
Clearly, the term $\abs{x_2 S_j / Q_k}$ will mat{t}er more than $\abs{x_2 / ( S_j Q_k )}$.\\[-0.2cm]

\noindent The bounds ment{i}oned for $\gamma$ follow from Lemma~\ref{L:gamma} and \eqref{E:a}.
    \end{proof}
    \begin{prop}\label{P:irrlb}
      For any pair consist{i}ng of a start{i}ng point $\x \in \Fqtinv^2 \setminus \{ \zero \}$ 
      and target $\y$ with their respect{i}ve slopes $\xi$ and $y$ irrat{i}onal, one has
      \[
        \mu ( \x, \y )\geq\frac{1}{3}\quad\text{and}\quad\hatmu{\x}{\y} \geq \frac{\omega(y)+1}{2(2\omega(y)+1)\omega (\xi)}\geq \frac{1}{4\omega(\xi)}.
      \]
    \end{prop}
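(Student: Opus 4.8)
The plan is to read everything off Lemma~\ref{L:irrjk}, which for each pair of indices $(j,k)$ produces a matrix $\gamma=\gamma_{j,k}:=N_jU(a)M_k\in\SLtwoFqT$ satisfying $\abs{\gamma}\le\max\{\abs{S_jQ_k},\,\abs{y_2/x_2}\abs{Q_kQ_{k-1}}\}$ and $\abs{\gamma\x-\y}\le\max\{\abs{y_2/(S_jS_{j+1})},\,\abs{x_2S_j/Q_k}\}$. The whole game is to choose $(j,k)$ so that the two competing terms in the error estimate are of comparable size while $\abs{\gamma}$ stays small. Since $\abs{S_{j+1}}\ge\abs{S_j}$, the first error term is at most $\abs{y_2}\abs{S_j}^{-2}$, so ``balanced'' means roughly $\abs{Q_k}\asymp\abs{S_j}^2$; in that regime one gets $\abs{\gamma}\asymp\abs{S_j}^3$ and $\abs{\gamma\x-\y}\asymp\abs{S_j}^{-1}=\abs{\gamma}^{-1/3}$, which is exactly the exponent $1/3$.

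For the bound $\mu(\x,\y)\ge\tfrac13$: for each $j$ let $k=k(j)$ be the least index with $\abs{Q_k}\ge\abs{x_2/y_2}\abs{S_j}^2$. Then $\abs{x_2S_j/Q_k}\le\abs{y_2}\abs{S_j}^{-1}$ and $\abs{Q_{k-1}}<\abs{x_2/y_2}\abs{S_j}^2$, whence $\abs{\gamma_{j,k}}\ll_{\x,\y}\abs{Q_k}\abs{S_j}^2$ and $\abs{\gamma_{j,k}\x-\y}\ll_{\x,\y}\abs{S_j}^{-1}$ (using $\abs{S_{j+1}}\ge\abs{S_j}$). When the denominators of $\xi$ do not jump over the scale $\abs{S_j}^2$, i.e.\ $\abs{Q_k}\asymp\abs{S_j}^2$, this already gives $\abs{\gamma_{j,k}\x-\y}\le\abs{\gamma_{j,k}}^{-1/3}$. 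The remaining indices, where one of the two continued fractions has a large gap at the relevant scale, are absorbed by a case analysis: one plays the symmetric move (fix an index of $\xi$ and choose the index of $y$), uses that a large gap of $\xi$ makes the neighbouring $\abs{Q_{k-1}}$ small, and invokes $\abs{S_j}\le\abs{S_{j+1}}$, $\abs{Q_{k-1}}<\abs{Q_k}$ and the ultrametric inequality to preserve $\abs{\gamma_{j,k}\x-\y}\le\abs{\gamma_{j,k}}^{-1/3}$. As $\abs{\gamma_{j,k}}\to\infty$ distinct pairs give distinct matrices, and a counting argument shows the set of pairs realizing the inequality is infinite; hence $\mu(\x,\y)\ge\tfrac13$.

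For the uniform exponent we bring in the quantitative gap estimates of Section~\ref{S:cf}: for $\eps>0$ one has $\abs{Q_{k+1}}\le\abs{Q_k}^{\omega(\xi)+\eps}$ and $\abs{S_{j+1}}\le\abs{S_j}^{\omega(y)+\eps}$ for all large indices. Given $H\gg0$, pick $k$ as large as possible so that the size bound of Lemma~\ref{L:irrjk} for the balanced $j$ still keeps $\abs{\gamma}\le H$; the gap estimate for $\xi$ then forces $\abs{Q_k}\gg H^{1/(2\omega(\xi)+2\eps)}$. Next take $j$ largest with $\abs{S_j}\le\abs{Q_k}^{(\omega(y)+\eps)/(2\omega(y)+2\eps+1)}$, the exponent chosen precisely so that the two error terms of Lemma~\ref{L:irrjk} come out equal; the gap estimate for $y$ supplies the matching lower bound $\abs{S_j}\ge\abs{Q_k}^{1/(2\omega(y)+2\eps+1)}$, so that both error terms are $\ll_{\x,\y}\abs{Q_k}^{-(\omega(y)+1+\eps)/(2\omega(y)+2\eps+1)}$. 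Substituting the lower bound for $\abs{Q_k}$ gives $\abs{\gamma\x-\y}\le H^{-\frac{\omega(y)+1}{2(2\omega(y)+1)\omega(\xi)}+o_\eps(1)}$ with $\abs{\gamma}\le H$, and letting $\eps\to0$ yields $\hatmu{\x}{\y}\ge\frac{\omega(y)+1}{2(2\omega(y)+1)\omega(\xi)}$. The last inequality $\frac{\omega(y)+1}{2(2\omega(y)+1)}\ge\frac14$ is elementary: $4(\omega(y)+1)-2(2\omega(y)+1)=2>0$.

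The genuine obstacle, in both parts, is the coordination of the two continued fraction expansions: the denominators $\abs{Q_k}$ of $\xi$ and $\abs{S_j}$ of $y$ need not be commensurable, so one must dispose of every configuration in which one sequence of convergents skips over the scale dictated by the other, and check — using only $\abs{S_j}\le\abs{S_{j+1}}$, $\abs{Q_{k-1}}<\abs{Q_k}$, the ultrametric inequality, and the gap bounds from $\omega(\xi),\omega(y)$ — that the balancing inequality still closes at the claimed exponent; for the uniform exponent one must in addition verify that the sizes $\abs{\gamma_{j,k}}$ actually realized are dense enough to cover every large $H$. Each individual case is a one-line estimate, but assembling them, and keeping track of the harmless constants $\abs{x_2},\abs{y_2}$ and of whether $\abs{S_j}\gtrless\abs{Q_{k-1}}$ (which decides which term dominates $\abs{\gamma}$), is where the work lies.
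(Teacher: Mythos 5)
Your treatment of the uniform exponent is essentially the paper's own argument (choose $\abs{S_j}\leq\abs{Q_k}^{\tau}<\abs{S_{j+1}}$ with $\tau=\omega(y)/(2\omega(y)+1)$, use the gap bounds coming from $\omega(y)$ and $\omega(\xi)$, and note $1-\tau=(\omega(y)+1)/(2\omega(y)+1)$), and that half is fine in outline. The asymptotic half, however, has a genuine gap, and it sits exactly in your ``main case''. With $k$ least such that $\abs{Q_k}\geq\abs{x_2/y_2}\abs{S_j}^2$, Lemma~\ref{L:irrjk} only gives $\abs{\gamma}\leq\max\{\abs{S_jQ_k},\abs{y_2/x_2}\abs{Q_kQ_{k-1}}\}$, and in the generic situation $\abs{Q_{k-1}}\asymp\abs{Q_k}\asymp\abs{S_j}^2$ the second term is $\asymp\abs{S_j}^4$, not $\abs{S_j}^3$ as you assert; since the error is only $\asymp\abs{S_j}^{-1}$, your balanced regime yields the exponent $1/4$, not $1/3$. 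The claim $\abs{\gamma}\asymp\abs{S_j}^3$ requires $\abs{Q_{k-1}}\ll\abs{S_j}\asymp\abs{Q_k}^{1/2}$, i.e.\ a large gap in the convergents of $\xi$ — which is precisely the non-generic case. So the configuration you defer to an unspecified ``case analysis'' is not an exceptional nuisance; it is the main case, and the ``symmetric move'' you invoke does not exist, because the error bound of Lemma~\ref{L:irrjk} is not symmetric in $(j,k)$ (it involves $S_jS_{j+1}$ but only $S_j/Q_k$, never $Q_kQ_{k+1}$).

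The paper resolves this by a dichotomy on $\omega(\xi)$ with two \emph{different} balancing scales. If $\omega(\xi)<3$, one takes $\abs{S_j}\asymp\abs{Q_k}^{1/3}$ (not $\abs{Q_k}\asymp\abs{S_j}^2$): then the error is dominated by $\abs{y_2/(S_jS_{j+1})}\ll\abs{Q_kQ_{k-1}}^{-1/3}$, and since $\abs{Q_{k-1}}\geq\abs{Q_k}^{1/\omega}>\abs{Q_k}^{1/3}\gg\abs{S_j}$ the size is $\abs{\gamma}\asymp\abs{Q_kQ_{k-1}}$, giving exactly $\abs{\gamma}^{-1/3}$. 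If $\omega(\xi)>2$, there are infinitely many $k$ with $\abs{Q_{k-1}}<\abs{Q_k}^{1/2}$, and for those one balances at $\abs{S_j}\asymp\abs{Q_k}^{1/2}$, getting $\abs{\gamma}\ll\abs{Q_k}^{3/2}$ and error $\ll\abs{Q_k}^{-1/2}$; here one must also secure a lower bound on $\abs{\gamma}$ (the paper perturbs $a$ to $[\rho]$ or $[\rho]+1$) to guarantee infinitely many distinct matrices, a point your ``counting argument'' leaves unaddressed. Without these two regimes and the correct choice of scale in each, the bound $\mu(\x,\y)\geq 1/3$ does not follow from your construction.
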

\noindent At this stage, 
we follow two parallel strategies as per whether $\omega (\xi) < 3$ or $\omega (\xi) > 2$. 

\subsubsection{The case $\omega (\xi) < 3$} 
For any $j$ large enough, pick $k$ sat{i}sfying
    \begin{equation}\label{E:sw}
      \abs{\frac{y_2Q_{k - 1}}{x_2}}^{1/3} < \abs{S_j} \leq \abs{\frac{y_2Q_{k}}{x_2}}^{1/3} < \abs{S_{j + 1}}.
    \end{equation}
    Let us subst{i}tute this into Lemma~\ref{L:irrjk} which then gives
    \begin{equation}
      \abs{\gamma\x - \y} 
      < \abs{y_2 x_2^2}^{1/3} \max \left\{ \frac{1}{\abs{Q_k Q_{k - 1}}^{1/3}}, \frac{1}{\abs{Q_k}^{2/3}} \right\} 
      = \abs{\frac{y_2 x_2^2}{Q_k Q_{k - 1}}}^{1/3}.
    \end{equation}
    Next, 
    take some $\omega$ with $\omega (\xi) < \omega < 3$ 
    so that $\abs{Q_{k - 1}} \geq \abs{Q_k}^{1 / \omega} > \abs{Q_k}^{1/3}$ 
    for all $k \gg 1$. 
    On the other hand, we also have $\abs{S_j} \ll_{\x, \y} \abs{Q_k}^{1/3}$ by our construct{i}on 
    which dictates that $\abs{\gamma} = \abs{y_2 Q_k Q_{k - 1} / x_2}$. 
    Subsequently, one gets
    \begin{equation}
      \abs{\gamma\x - \y} \ll_{\x, \y} \abs{\gamma}^{- 1/3}, 
    \end{equation}
    for inf{i}nitely many such pairs $( j, k )$ 
    and the inf{i}nite set of matrices $\{ N_j U (a) M_k \} \subset \SLtwoFqT$ determined by them. 
    In part{i}cular, we have shown that $\mu ( \x, \y ) \geq 1/3$.

\subsubsection{\protect{The case $\omega (\xi) > 2$}}
\label{SSSec:geq2} 
F{i}x any $\omega$ with $2 < \omega < \omega (\xi)$. 
Then, there exist inf{i}nitely many $k$ for which $\abs{Q_{k - 1}} \leq \abs{Q_k}^{1 / \omega} < \abs{Q_k}^{1/2}$. 
For each such $k$, we choose $j$ such that
    \begin{equation}\label{E:sw2}
      \abs{S_{j - 1}} < \abs{S_j} \leq \abs{\frac{y_2Q_{k}}{x_2}}^{1/2} < \abs{S_{j + 1}}.
    \end{equation}
The upper right entry $a$ in the unipotent matrix $U (a)$ is chosen to be either the polynomial part $[\rho]$ or $[\rho] + 1$. This will depend on the bot{t}om lef{t} entry $S_j Q_k + ( -1 )^{k - 1} Q_{k - 1} ( S_j a \pm S_{j - 1} )$ having absolute value at least \abs{S_j Q_{k - 1}}\ or not for $a = [ \rho ]$. If
    \begin{equation}
      \abs{S_jQ_k + ( -1 )^{k - 1}Q_{k - 1} ( S_j [\rho] \pm S_{j - 1} )} < \abs{S_j Q_{k - 1}},
    \end{equation}
then it cannot be so for $a = [\rho] + 1$ as well. 
It means that the corresponding matrices $\gamma = N_j U (a) M_k$ will have size at least \abs{S_j Q_{k - 1}}\ 
for one of those choices. 
Both $[\rho]$ and $[\rho] + 1$ have the same size as $\rho$ equal to \abs{y_2 Q_k / S_j}. 
Af{t}er this, 
Lemma~\ref{L:irrjk} provides us an \SLtwoFqT\ matrix $\gamma$ with height 
$\abs{\gamma} \leq \abs{y_2 Q_k^3 / x_2}^{1/2}$ and
    \begin{align}\label{E:1by3}
      \abs{\gamma\x - \y} &\leq \max \{ 1/\abs{S_j}, 1 \} \abs{x_2 y_2}^{1/2} \abs{Q_k}^{ - 1 / 2}\\
      &= \abs{x_2 y_2}^{1/2} \abs{Q_k}^{ - 1 / 2} \ll_{\x, \y} \abs{\gamma}^{-1/3}.\notag
    \end{align}
    The lower bound on $\gamma$ established above ensures that we have inf{i}nitely many \SLtwoFqT\ matrices for which~\eqref{E:1by3} holds and $\mu ( \x, \y ) \geq 1/3$ here too.

\subsubsection{Uniform exponent} 
In this subsect{i}on, we calculate a lower bound for \hatmu{\x}{\y}\ when the target point \y\ has an irrat{i}onal slope $y$. 
Let $\omega (y)$ be the irrat{i}onality measure (with respect to approximat{i}on by elements of \Fqt) of $y$.
\begin{lem}\label{L:irrlb}
If $\tau := \omega(y) / \big( 2\omega(y) + 1 \big),\ \eps > 0$ and $k_0 \gg_{\eps} 1$ is a natural number, 
then there exists a matrix $\gamma \in \SLtwoFqT$ for which
  \[
  \abs{\gamma} \ll_{\x, \y} \abs{Q_k}^2 \text{ and } \abs{\gamma\x - \y} \leq \abs{Q_k}^{\tau - 1 + \eps}.
  \]
\end{lem}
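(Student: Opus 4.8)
The plan is to produce $\gamma$ of the shape $N_j U(a) M_k$ furnished by Lemma~\ref{L:irrjk}, with the convergent index $j = j(k)$ of $y$ chosen so that $\abs{S_j}$ sits just below $\abs{Q_k}^{\tau}$. Concretely, for each $k \ge k_0$ I would take $j = j(k)$ to be the largest index with $\abs{S_j} \le \abs{Q_k}^{\tau} < \abs{S_{j+1}}$; this index exists since $\abs{S_0} = 1$ and $\abs{S_j} \to \infty$, and $j(k) \to \infty$ as $k \to \infty$ because $\abs{Q_k} \to \infty$, so $j(k) \in \N^{*}$ once $k_0$ is large. Lemma~\ref{L:irrjk} then hands us an $a \in \FqT$ and $\gamma = N_j U(a) M_k \in \SLtwoFqT$ with
\[
  \abs{\gamma} \le \max\bigl\{ \abs{S_j Q_k},\ \abs{y_2 Q_k Q_{k-1}/x_2} \bigr\} \le \max\bigl\{ \abs{Q_k}^{1+\tau},\ \abs{y_2/x_2}\,\abs{Q_k}^{2} \bigr\} \ll_{\x,\y} \abs{Q_k}^{2},
\]
using $\abs{S_j} \le \abs{Q_k}^{\tau}$, $\tau < 1$ and $\abs{Q_{k-1}} < \abs{Q_k}$; this disposes of the size estimate.

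It remains to bound $\abs{\gamma\x - \y}$, for which Lemma~\ref{L:irrjk} gives $\abs{\gamma\x - \y} \le \max\{ \abs{y_2/(S_j S_{j+1})},\ \abs{x_2 S_j / Q_k} \}$. The second term is at most $\abs{x_2}\,\abs{Q_k}^{\tau-1} \le \abs{Q_k}^{\tau-1+\eps}$ as soon as $\abs{Q_k}$ is large. For the first term I would first fix, in the case $\omega(y) < \infty$, a real $\omega > \omega(y)$ close enough to $\omega(y)$ that $-\tau(\omega+1)/\omega < \tau - 1 + \eps$; this is possible because the left side tends to $\tau - 1$ as $\omega \downarrow \omega(y)$, which one checks from $\tau = \omega(y)/(2\omega(y)+1)$. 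By the discussion closing \S\,\ref{S:cf} (applied to $y$), we have $\abs{S_{j+1}} < \abs{S_j}^{\omega}$ for all $j$ past some $j_0$; after enlarging $k_0$ so that $j(k) \ge j_0$ for $k \ge k_0$, the inequality $\abs{Q_k}^{\tau} < \abs{S_{j+1}} < \abs{S_j}^{\omega}$ forces $\abs{S_j} > \abs{Q_k}^{\tau/\omega}$, whence $\abs{S_j S_{j+1}} > \abs{Q_k}^{\tau(\omega+1)/\omega}$ and
\[
  \abs{\frac{y_2}{S_j S_{j+1}}} < \abs{y_2}\,\abs{Q_k}^{-\tau(\omega+1)/\omega} \le \abs{Q_k}^{\tau-1+\eps}
\]
for $k$ large, as desired. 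When $\omega(y) = \infty$ — so that $\tau = 1/2$ is to be understood as a limit — I would instead fix $\tau \in ((1-\eps)/2,\,1/2)$ and argue with the trivial bound $\abs{S_j} \ge 1$ in place of $\abs{S_j} > \abs{Q_k}^{\tau/\omega}$, getting $\abs{S_j S_{j+1}} \ge \abs{S_{j+1}} > \abs{Q_k}^{\tau}$ and hence $\abs{y_2/(S_j S_{j+1})} < \abs{y_2}\,\abs{Q_k}^{-\tau} \le \abs{Q_k}^{\tau-1+\eps}$ since $2\tau > 1 - \eps$.

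The hard part, such as it is, will be the coordinated choice of parameters: one fixes $\omega$ in terms of $\eps$ and $\omega(y)$, then reads $j_0$ off the continued fraction of $y$, and only afterwards fixes $k_0$ large enough both to guarantee $j(k) \ge j_0$ and to let the powers of $\abs{Q_k}$ absorb the fixed constants $\abs{x_2}, \abs{y_2}$. Keeping this order of quantifiers straight, together with the elementary verification that $-\tau(\omega+1)/\omega < \tau - 1 + \eps$ can be arranged, is really the only delicate point; the rest is a direct substitution into Lemma~\ref{L:irrjk}.
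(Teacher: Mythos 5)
Your proposal is correct and follows essentially the same route as the paper: the same coupling $\abs{S_j} \leq \abs{Q_k}^{\tau} < \abs{S_{j+1}}$, the same appeal to Lemma~\ref{L:irrjk}, and the same use of $\omega > \omega(y)$ to bound $\abs{S_j}$ from below by a power of $\abs{Q_k}$ (your algebraic condition on $\omega$ is equivalent to the paper's $1/\omega > 1/\omega(y) - \eps/\tau$). The only cosmetic difference is in the case $\omega(y) = \infty$, where the paper simply notes that the lower bound on $\abs{S_j}$ is trivially true, making your perturbation of $\tau$ unnecessary.
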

    \begin{proof}
      Observe that $\tau$ is in the range $[ 1/3, 1/2 ]$ as $\omega(y) \in [ 1, \infty ]$. 
      Much like~\cite{LN12}, our choice of the indices $j$ and $k$ in the construct{i}on of $\gamma$ is governed by
      \begin{equation}\label{E:jk}
        \abs{S_j} \leq \abs{Q_k}^{\tau} < \abs{S_{j + 1}}, 
      \end{equation}
      so that both of them go to inf{i}nity together. 
      For $\omega < \infty$, 
      we let $\omega > \omega(y)$ but suf{f}iciently close to make sure 
      that $1/\omega > 1/\omega(y) - \eps/\tau$. 
      Then, $\abs{S_j} \geq \abs{S_{j + 1}}^{1 / \omega}$ for all $j \gg 1$ from \S\,\ref{S:cf} implying 
      that one has $\abs{S_j} \geq \abs{Q_k}^{\tau / \omega(y) - \eps}$ for all large $k$. 
      Now, not{i}ce that this lower bound is trivially true when $\omega(y) = \infty$.\\[-0.2cm]

      With $j$ and $k$ related by~\eqref{E:jk}, Lemma~\ref{L:irrjk} gives us an~\SLtwoFqT\ matrix $\gamma$ such that
      \begin{align}
        \abs{\gamma} &\ll_{\x, \y} \max \{ \abs{S_j Q_k}, \abs{Q_k Q_{k - 1}} \} \leq \abs{Q_k}^2, \text{ and}\\
        \abs{\gamma\x - \y} &\ll_{\x, \y} \max \{ 1/ \abs{S_jS_{j + 1}}, \abs{S_j / Q_k} \} \leq \abs{Q_k}^{\tau - 1 + \eps}.\notag
      \end{align}
      The last dependence on $\x$ and $\y$ is absorbed in the rising $\eps$-powers of \abs{Q_k}'s.
    \end{proof}
Given any $H \gg 1$,
we pick the integer $k$ for which $c_0\abs{Q_k}^2 \leq H < c_0\abs{Q_{k + 1}}^2$ 
where $c_0$ is the implied constant in the upper bound on \abs{\gamma}\ obtained in Lemma~\ref{L:irrlb}. 
When $H$ and consequently $k$ is large enough in terms of $\eps > 0$, 
it follows from the definition of $\omega(\xi)$ 
that $\abs{Q_{k + 1}}^2 \leq \abs{Q_k}^{2\omega(\xi) + \eps}$. 
Hence, we get a matrix $\gamma$ with $\abs{\gamma} \leq H$ and also,
    \begin{equation}
      \abs{\gamma\x - \y} \leq \abs{Q_k}^{- ( 1 - \tau - \eps )} \ll H^{- ( 1 - \tau - \eps ) / (2\omega (\xi) + \eps)}
    \end{equation}
assuming $\eps < 1/2$. In other words, we have
    \begin{equation}
      \hatmu{\x}{\y} \geq \frac{1 - \tau - \eps}{2\omega(\xi) + \eps}
    \end{equation}
where $\eps$ may be erased from the numerator and denominator by taking the limit 
as $\eps $ tends to $0$. 
This gives us the second part of Proposit{i}on~\ref{P:irrlb}.

\subsubsection{A generic upper bound}\label{SSSec:ub}

We now focus on having an upper bound for the asymptot{i}c exponent $\mu ( \x, \y )$. 
This shall be possible for us only for a co-null set consist{i}ng of target vectors 
whose slope has irrat{i}onality measure $1$. 
The lemma below rewrites a matrix $\gamma$ which helps \x\ to reach close to \y\ as a product of the convergent matrices 
$N_j, M_k$ and some $G \in \SLtwoFqT$ for some $j$ and $k$ so that we have good control over the entries of $G$.
 \begin{lem}\label{L:G}
  If $\mu \in [ 0, 1 ]$ and $H \gg_{\x} 1$ are real numbers such that $\gamma$ sat{i}sf{i}es 
  both $\abs{\gamma} \leq qH$ and $\abs{\gamma\x - \y} \leq H^{-\mu}$, 
  $k$ is the index for which $\abs{Q_{k - 1}Q_k} \leq H \leq \abs{Q_kQ_{k + 1}}$ 
  and $j$ is large enough to guarantee $\abs{S_j} \geq H^{\mu / 2}$, 
  then $\gamma = N_j G M_k$ for some \SLtwoFqT\ matrix $G$ 
  whose f{i}rst and second columns are respect{i}vely bounded above in size by
        \[
          c_1 \abs{S_j/Q_k}H^{1 - \mu} \text{ and } c_1\abs{S_jQ_k}H^{-\mu},
        \]
   where $c_1 := \max \{ q, 1/\abs{x_2}, \abs{y_2/x_2}, q/\abs{y_2} \}$.
  \end{lem}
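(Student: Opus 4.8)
The plan is to work directly with the matrix $G := N_j^{-1}\gamma M_k^{-1}$: since $N_j$, $\gamma$ and $M_k$ all have polynomial entries and determinant $1$, so does $G$, and $\gamma = N_j G M_k$ holds by construction, so the only thing to prove is the bound on the two columns of $G$. Writing $\underline{E} := \gamma\x - \y$ (so $\abs{\underline{E}}\leq H^{-\mu}$) and recalling from \eqref{E:conv} that the columns of $M_k^{-1}$ are $(-1)^k\,\tp{(P_{k-1},Q_{k-1})}$ and $\tp{(P_k,Q_k)}$, I would first record the elementary identity $\tp{(P_\ell,Q_\ell)} = (Q_\ell/x_2)\,\x - \eps_\ell\,\tp{(1,0)}$, valid for every $\ell$, where $\eps_\ell = Q_\ell\xi - P_\ell$. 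Substituting it, the first column of $G$ becomes $(-1)^k\big[(Q_{k-1}/x_2)\,N_j^{-1}(\y+\underline{E}) - \eps_{k-1}\,N_j^{-1}\big(\gamma\,\tp{(1,0)}\big)\big]$ and the second column becomes $(Q_k/x_2)\,N_j^{-1}(\y+\underline{E}) - \eps_k\,N_j^{-1}\big(\gamma\,\tp{(1,0)}\big)$. Thus everything reduces to two estimates: one for $N_j^{-1}(\y+\underline{E})$ and one for $N_j^{-1}$ applied to the first column $\tp{(V_1,V_2)}$ of $\gamma$.

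The first estimate is routine. Since $\y$ has slope $y$, which is exactly the slope of the convergents $R_j/S_j$ of $y$, a direct computation with $N_j^{-1} = \left(\begin{smallmatrix}\widetilde{S}_{j-1} & -\widetilde{R}_{j-1}\\ -S_j & R_j\end{smallmatrix}\right)$ together with \eqref{E:cferror} applied to $y$ gives $\abs{N_j^{-1}\y} = \abs{y_2}/\abs{S_j}$. Because $\abs{N_j^{-1}} = \abs{S_j}$ (here $\abs{y}\leq 1$ is used) and $\abs{S_j}\geq H^{\mu/2}$, the error only contributes $\abs{N_j^{-1}\underline{E}}\leq\abs{S_j}H^{-\mu}\leq 1/\abs{S_j}$, so $\abs{N_j^{-1}(\y+\underline{E})}\leq\max\{\abs{y_2},1\}/\abs{S_j}$.

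The real obstacle is the second estimate, because the naive bound $\abs{N_j^{-1}\big(\gamma\,\tp{(1,0)}\big)}\leq\abs{S_j}\abs{\gamma}\leq q\abs{S_j}H$ is too weak by a factor $H^{\mu}$; the lemma hinges on the fact that, since $\gamma\x$ is so close to $\y$ and $\det\gamma = 1$, the first column of $\gamma$ is almost proportional to $\y$. Concretely, with $u := V_1 - V_2 y$ one has $\gamma\,\tp{(1,0)} = (V_2/y_2)\,\y + u\,\tp{(1,0)}$, and reading off the second coordinate of the identity $\gamma^{-1}\y = \x - \gamma^{-1}\underline{E}$ gives $y_2 u = x_2 - (\gamma^{-1}\underline{E})_2$, hence $\abs{u}\leq\max\{\abs{x_2},\,\abs{\gamma}\abs{\underline{E}}\}/\abs{y_2}\ll_{\x,\y} H^{1-\mu}$ for $H$ large. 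Feeding this into $N_j^{-1}\big(\gamma\,\tp{(1,0)}\big) = (V_2/y_2)\,N_j^{-1}\y + u\,N_j^{-1}\,\tp{(1,0)}$ and using $\abs{V_2}\leq\abs{\gamma}\leq qH$ and $\abs{S_j}^2\geq H^{\mu}$ yields $\abs{N_j^{-1}\big(\gamma\,\tp{(1,0)}\big)}\leq\max\{\abs{V_2}/\abs{S_j},\,\abs{u}\abs{S_j}\}\leq c_1\abs{S_j}H^{1-\mu}$, the explicit constant $c_1 = \max\{q,1/\abs{x_2},\abs{y_2/x_2},q/\abs{y_2}\}$ emerging from these ratios. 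Plugging the two estimates back into the column formulas and using $\abs{\eps_{k-1}} = \abs{Q_k}^{-1}$, $\abs{\eps_k} = \abs{Q_{k+1}}^{-1}$ and $\abs{Q_{k-1}Q_k}\leq H\leq\abs{Q_kQ_{k+1}}$, all the remaining inequalities between powers of $\abs{S_j}$, $\abs{Q_k}$ and $H$ are routine and give the bounds $c_1\abs{S_j/Q_k}H^{1-\mu}$ on the first column and $c_1\abs{S_jQ_k}H^{-\mu}$ on the second.
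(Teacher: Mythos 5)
Your proposal is correct and follows essentially the same route as the paper: both factor $G = N_j^{-1}\gamma M_k^{-1}$, control $N_j^{-1}\gamma$ via the near-proportionality of the first column of $\gamma$ to $\y$ (your identity $y_2u = x_2 - (\gamma^{-1}\underline{E})_2$ is exactly the paper's determinant bound \eqref{E:Det}) together with the continued-fraction error $\abs{S_j y - R_j}$, and then absorb $M_k^{-1}$ using $\eps_{k-1}, \eps_k$ and $\abs{Q_{k-1}Q_k} \leq H \leq \abs{Q_kQ_{k+1}}$. One intermediate inequality is reversed: since $\abs{S_j} \geq H^{\mu/2}$ one has $\abs{S_j}H^{-\mu} \geq 1/\abs{S_j}$, not $\leq$, so your claimed estimate $\abs{N_j^{-1}(\y + \underline{E})} \leq \max\{\abs{y_2},1\}/\abs{S_j}$ is unjustified; the correct bound is $\max\{\abs{y_2},1\}\abs{S_j}H^{-\mu}$, which is what the paper uses and which still yields the stated column bounds via $\abs{Q_{k-1}Q_k} \leq H$.
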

      \begin{proof}
        The matrix $\gamma$ is again taken to be
        \begin{equation}
          \begin{pmatrix}
            V_1 & U_1\\ V_2 & U_2
          \end{pmatrix}
        \end{equation}
        and then, the components $\Lambda_i$ of the di{f}ference $\gamma\x - \y$ are given by~\eqref{E:lambda}. 
        Our hypothesis implies that $\max \{ \abs{\Lambda_1}, \abs{\Lambda_2} \} \leq H^{-\mu}$. 
        From the argument involved in~\eqref{E:Det}, one has $\abs{V_1y_2 - V_2y_1} \leq qH^{1 - \mu}$ and similarly,
        \begin{equation}
          \abs{U_1y_2 - U_2y_1} \leq \max\,\{\,\abs{\xi},\,H^{1 - \mu}\,\} = qH^{1 - \mu}
        \end{equation}
        as we earlier took $\abs{\xi} \leq 1$. The convergent matrix $N_j$ is factored in f{i}rst so that
        \begin{align*}
          \widetilde{\gamma} &=
          \begin{pmatrix}
            \widetilde{V}_1 & \widetilde{U}_1\\
            \widetilde{V}_2 & \widetilde{U}_2
          \end{pmatrix} := N_j^{-1}\gamma =
          \begin{pmatrix}
            \widetilde{S}_{j - 1}V_1 - \widetilde{R}_{j - 1}V_2 & \widetilde{S}_{j - 1}U_1 - \widetilde{R}_{j - 1}U_2\\
            - S_{j}V_1 + R_{j}V_2 & - S_{j}U_1 + R_{j}U_2
          \end{pmatrix}\\
          &= \frac{1}{y_2}
          {\fontsize{6.5pt}{6.5pt}
          \begin{pmatrix}
            \widetilde{S}_{j - 1} ( V_1y_2 - V_2y_1 ) + V_2 ( \widetilde{S}_{j - 1}y_1 - \widetilde{R}_{j - 1}y_2 ) & \widetilde{S}_{j - 1} ( U_1y_2 - U_2y_1 ) + U_2 ( \widetilde{S}_{j - 1}y_1 - \widetilde{R}_{j - 1}y_2 )\\
            -S_{j} ( V_1y_2 - V_2y_1 ) - V_2 ( S_{j}y_1 - R_{j}y_2 ) & -S_{j} ( U_1y_2 - U_2y_1 ) - U_2 ( S_{j}y_1 - R_{j}y_2 )
          \end{pmatrix}}.
        \end{align*}
        Since $\abs{S_jy - R_j} < \abs{\widetilde{S}_{j - 1}y - \widetilde{R}_{j - 1}} = 1/\abs{S_j}$, we get
        \begin{equation}
          \abs{\widetilde{\gamma}} \leq q\cdot\max\,\{\,\abs{S_j/y_2}H^{1 - \mu},\,H/\abs{S_j}\,\} = q\cdot\max \{ \abs{y_2}^{-1}, 1 \} \abs{S_j}H^{1 - \mu}
        \end{equation}
        for all j such that $\abs{S_j} \geq H^{\mu / 2}$. One also has
        \begin{align}
          \widetilde{\gamma}\x &= x_2 \begin{bmatrix}
                                       \widetilde{V}_1 \xi + \widetilde{U}_1\\
                                       \widetilde{V}_2 \xi + \widetilde{U}_2
                                     \end{bmatrix}
                               = N_j^{-1} \left( \y + \begin{bmatrix} \Lambda_1\\ \Lambda_2 \end{bmatrix} \right)\notag\\
          &= \begin{bmatrix}
                                   y_2 ( \widetilde{S}_{j - 1}y - \widetilde{R}_{j - 1} ) + \widetilde{S}_{j - 1}\Lambda_1 - \widetilde{R}_{j - 1}\Lambda_2\\
                                   y_2 ( -y S_j + R_j ) - ( S_j\Lambda_1 - R_j\Lambda_2 )
                                 \end{bmatrix}
        \end{align}
        which implies that $\abs{\widetilde{\gamma}\x} \leq \max \{ \abs{y_2/S_j}, \abs{S_j}H^{-\mu} \} \leq \max \{ \abs{y_2}, 1 \} \abs{S_j}H^{-\mu}$. Let us further append $M_k^{-1}$ to the right of $\widetilde{\gamma}$ and denote
        \begin{align}
          G &:= N_j^{-1}\gamma M_k^{-1} =
          \begin{pmatrix}
            \widetilde{V}_1 & \widetilde{U}_1\\
            \widetilde{V}_2 & \widetilde{U}_2
          \end{pmatrix}
          \begin{pmatrix}
            (-1)^k P_{k - 1} & P_k\\
            (-1)^k Q_{k - 1} & Q_k
          \end{pmatrix}\notag\\
          &= \begin{pmatrix}
               (-1)^k ( P_{k - 1}\widetilde{V}_1 + Q_{k - 1}\widetilde{U}_1 ) & P_k\widetilde{V}_1 + Q_k\widetilde{U}_1\\
               (-1)^k ( P_{k - 1}\widetilde{V}_2 + Q_{k - 1}\widetilde{U}_2 ) & P_k\widetilde{V}_2 + Q_k\widetilde{U}_2
            \end{pmatrix}.
        \end{align}
The top and bot{t}om entries of the lef{t} column of this matrix are the same as
        \begin{equation}
          -\widetilde{V}_i ( Q_{k - 1}\xi - P_{k - 1} ) + Q_{k - 1} ( \widetilde{V}_i\xi + \widetilde{U}_i )
        \end{equation}
for $i$ equal to $1$ and $2$, respect{i}vely and upto mult{i}plicat{i}on by $\pm 1$. 
Both of these expressions are bounded above by
        \begin{equation}
          \max \left\{ \frac{\abs{\widetilde{\gamma}}}{\abs{Q_k}}, \frac{\abs{Q_{k - 1}\abs{\widetilde{\gamma}\x}}}{\abs{x_2}} \right\} 
          \leq \max \left\{ q, \frac{1}{\abs{x_2}}, \abs{\frac{y_2}{x_2}}, \frac{q}{\abs{y_2}} \right\}\,\abs{S_j / Q_k}H^{1 - \mu}.
        \end{equation}
        Insofar as the other column is concerned,
        \begin{align}
          \abs{P_{k}\widetilde{V}_i + Q_{k}\widetilde{U}_i} 
          &= \abs{-\widetilde{V}_i ( Q_k\xi - P_k ) + Q_k ( \widetilde{V}_i\xi + \widetilde{U}_i )}\notag\\
          &\leq \max \{ \abs{\widetilde{\gamma}}/\abs{Q_{k + 1}}, \abs{Q_k}\abs{\widetilde{\gamma}\x}/\abs{x_2} \}\\
          &\leq \max \{ q, 1/\abs{x_2}, \abs{y_2/x_2}, q/\abs{y_2} \}\,\abs{S_jQ_k}H^{-\mu}\notag
        \end{align}
employing the set of inequali{t}ies $\abs{Q_{k - 1}Q_k} \leq H \leq \abs{Q_kQ_{k + 1}}$ to the fullest.
      \end{proof}
Next, we f{i}x $\x \in \Fqtinv^2 \setminus \{ \zero \}, y \in \Fqtinv \setminus \Fqt$ 
such that $\omega(y) = 1$ and $\mu > 1/2$. 
Consider any compact subset $\mathcal{C} \subset \Fqtinv \tp{( y, 1 )} \setminus \{ \zero \}$. 
Following \citeauthor{LN12}~\cite{LN12}, we show that the set
      \begin{equation}
        \mathcal{C}_{\mu} := \{ \y \in \mathcal{C} \mid \abs{\gamma\x - \y} 
        \leq \abs{\gamma}^{-\mu} \text{ for inf{i}nitely many } \gamma \in \SLtwoFqT \}
      \end{equation}
has one-dimensional Lebesgue measure $0$. 
Since $\mathcal{C}$ and $\mu$ were chosen arbitrarily, 
one has that for almost all points $\y \in \Fqtinv\tp{( y, 1 )}$, 
the asymptot{i}c Diophant{i}ne exponent
      \begin{equation}
        \mu ( \x, \y ) \leq 1/2.
      \end{equation}
Suppose $\y \in \mathcal{C}_{\mu}$ and $\abs{\gamma} \gg 1$. 
Our integers $k \geq 1$ and $n \geq 0$ are such that
      \begin{align}\label{E:kn}
        \abs{Q_{k - 1}Q_k} &< \abs{\gamma} \leq \abs{Q_kQ_{k + 1}} \text{ and }\\
       q^{n}\abs{Q_{k - 1}Q_k} &< \abs{\gamma} \leq q^{n + 1}\abs{Q_{k - 1}Q_k}.\notag
      \end{align}
If $H$ is taken to be $q^{n}\abs{Q_{k - 1}Q_k}$, 
we get $\abs{\gamma} \leq qH$ and $\abs{\gamma\x - \y} \leq \abs{\gamma}^{-\mu} < H^{-\mu}$. 
Now, let $j$ be the smallest number for which $\abs{S_j} \geq H^{\mu / 2}$. 
As $\omega (y) = 1$, this denominator $S_j$ cannot be too large in size compared to $S_{j - 1}$. 
Namely, $\abs{S_j} \leq \abs{S_{j - 1}}^{1 + 2\eps/\mu} < H^{\mu/2 + \eps}$ for any $\eps > 0$ 
and all but f{i}nitely many $j$'s depending on $\eps$. 
This dependence has already been accounted for while choosing $\abs{\gamma}$ suf{f}iciently big. 
Subsequently, Lemma~\ref{L:G} informs us of the 
existence of a matrix decomposi{t}ion $\gamma = N_j G M_k$ 
with the f{i}rst column of $G$ bounded by 
$c_2\abs{S_j / Q_k}H^{1 - \mu} \leq c_2 H^{1 - \mu/2 + \eps}/\abs{Q_k}$ 
and the second column by $c_2\abs{Q_k}H^{-\mu/2 + \eps}$. 
The new constant $c_2 = \max_{\y \in \mathcal{C}} \max \{q, 1/\abs{x_2}, 
\abs{\y}/\abs{x_2}, q/\abs{\y} \}$ is a funct{i}on of $\mathcal{C}$ alone for a f{i}xed \x.
\begin{prop}
The number of \SLtwoFqT\ matrices with f{i}rst and second columns (rows) respect{i}vely bounded 
by $B_1$ and $B_2$ is at most $\mathcal{O} ( B_1 B_2 )$.
\end{prop}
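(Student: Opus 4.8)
The plan is to fix the second column of an element $\gamma \in \SLtwoFqT$ and count how many first columns complete it to such a matrix of the prescribed size; summed over all possible second columns this completion count will telescope to $\mathcal{O}(B_1 B_2)$. Two reductions come first. If $\gamma = \begin{pmatrix} a & b\\ c & d\end{pmatrix} \in \SLtwoFqT$, the identity $ad - bc = 1$ forces each column and each row of $\gamma$ to be a primitive vector of $\FqT^2$. Moreover $\gamma \mapsto \gamma J$, with $J$ as in~\eqref{E:J}, is a bijection of $\SLtwoFqT$ onto itself interchanging the two columns up to sign, so the quantity to be bounded is symmetric in $B_1$ and $B_2$; we may therefore assume $B_2 \le B_1$, and we may assume $B_1 \ge B_2 \ge 1$ since otherwise there are no admissible matrices. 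Transposing $\gamma$ turns rows into columns, so it suffices to treat the version with columns.

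Next I would fix a primitive vector $\underline{v} \in \FqT^2$ with $\abs{\underline{v}} \le B_2$ and bound the number of $\gamma \in \SLtwoFqT$ whose second column is $\underline{v}$ and whose first column has norm at most $B_1$. These first columns form a coset $\underline{u}_0 + \FqT\,\underline{v}$ of the rank-one module $\FqT\,\underline{v}$; using Euclidean division in $\FqT$, I would pick the representative $\underline{u}_0$ so that, in the coordinate where $\underline{v}$ realizes its norm, the corresponding coordinate of $\underline{u}_0$ has strictly smaller degree. The determinant relation $\det(\underline{u}_0, \underline{v}) = 1$ then pins down the other coordinate of $\underline{u}_0$ and yields $\abs{\underline{u}_0} \le \abs{\underline{v}} \le B_2 \le B_1$. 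Consequently, for every $t \in \FqT$ with $\abs{t} \ge q$ the ultrametric inequality gives $\abs{\underline{u}_0 + t\,\underline{v}} = \abs{t}\,\abs{\underline{v}}$, so the admissible translates are precisely the $t$ with $\abs{t} \le B_1/\abs{\underline{v}}$, at most $q\,B_1/\abs{\underline{v}}$ of them.

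Summing over the choices of $\underline{v}$, and recalling that $\FqT^2$ contains exactly $(q^2 - 1)q^{2e}$ vectors of norm $q^e$, the number of matrices in question is bounded by
\[
  \sum_{\substack{\underline{v}\in\FqT^2\setminus\{\zero\}\\ \abs{\underline{v}}\le B_2}} \frac{q\,B_1}{\abs{\underline{v}}}
  \;=\; q\,B_1\,(q^2-1)\sum_{e=0}^{\lfloor \log_q B_2\rfloor} q^{e}
  \;\le\; q^2(q+1)\,B_1 B_2,
\]
which is the claimed estimate. The only non-routine step is the inequality $\abs{\underline{u}_0} \le \abs{\underline{v}}$ obtained after reducing the base point of the completion coset: this is precisely where the hypothesis $\det\gamma = 1$ is brought to bear, and it is what keeps the per-column completion count down to $\mathcal{O}(B_1/\abs{\underline{v}})$, small enough for the sum over $\underline{v}$ to telescope to $\mathcal{O}(B_1 B_2)$ rather than to something substantially larger.
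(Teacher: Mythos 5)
Your argument is correct, and it realizes the same overall double count as the paper --- fix one column, show that each admits $\mathcal{O}(B/\abs{\text{column}})$ completions to a matrix in \SLtwoFqT\ of the prescribed size, then sum geometrically over the fixed column --- but the two steps are carried out quite differently. The paper fixes the \emph{first} column through its dominant entry $f$, bounds the number of compatible companion entries by $q\Phi(f)$ with $\Phi$ the function-field totient, gets the completion count $qB_2/\abs{f}$ from the congruence $a_{1,2}a_{2,1}\equiv -1 \pmod{a_{1,1}}$, and then needs the average $\sum_{\deg f = i}\Phi(f) = q^{2i}(q-1)$ quoted from Rosen. You instead fix the entire second column as a primitive vector $\underline{v}$, identify the completions as the coset $\underline{u}_0 + \FqT\,\underline{v}$, and reduce the base point to norm at most $\abs{\underline{v}}$ so that the ultrametric inequality yields the completion count $qB_1/\abs{\underline{v}}$ directly; the subsequent sum over $\underline{v}$ uses only the trivial count of polynomial vectors of each norm. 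Your route is more self-contained (no appeal to the totient average) and makes the role of $\det\gamma=1$ --- primitivity of the columns and the coset structure of the completions --- more transparent; you also handle the rows-versus-columns parenthetical and the degenerate range $B_2<1$ explicitly, which the paper leaves implicit. All the individual estimates (the bound $\abs{\underline{u}_0}\le\abs{\underline{v}}$ from the determinant relation, the count $qX$ of polynomials of norm at most $X$, the count $(q^2-1)q^{2e}$ of vectors of norm $q^e$, and the final telescoping to $q^2(q+1)B_1B_2$) check out.
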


      \begin{proof}
        Without loss of generality, we may take $B_1 \leq B_2$ or else interchange their roles. 
        Let us assume that the norm of the f{i}rst column is realized by the entry $f$ in the top row 
        and equals $q^i$ for some $i$ such that $q^i \leq B_1$. 
        Given any such polynomial $f$, 
        the number of allowed entries in the second row of the f{i}rst column is at most $q\Phi(f)$ 
        where $\Phi(f)$ denotes the number of elements in the mult{i}plicat{i}ve group $( \FqT/f\FqT )^*$. 
        With $a_{1,1}$ and $a_{2,1}$ f{i}xed, 
        there is a unique solut{i}on to the equat{i}on 
        $a_{1,2}a_{2,1} \equiv -1$ in $( \FqT/a_{1,1}\FqT )^*$ 
        corresponding to which we have a unique $a_{2,2}$ sat{i}sfying the determinant one condit{i}on. 
        Hence, the number of polynomial vectors \tp{( a_{1,2}, a_{2,2} )}\ 
        such that $a_{1,1}a_{2,2} - a_{1,2}a_{2,1} = 1$ 
        and $\max \{ \abs{a_{1,2}}, \abs{a_{2,2}} \} \leq B_2$ is at most $q B_2 / \abs{f}$. 
        Combining all the choices made, we have our number as
        \begin{equation}
          \ll \sum_{\substack{i \geq 0,\\ q^i \leq B_1}} 
          \sum_{\deg f = i} q\Phi(f)\frac{qB_2}{\abs{f}} \ll_q B_2 \sum_{\substack{i \geq 0,\\ q^i \leq B_1}} 
          \frac{1}{q^i} \sum_{\deg f = i} \Phi(f).
        \end{equation}
        The value of the last inner sum can be obtained 
        from~\citep[Proposit{i}on~2.7]{Ros02} to be $q^{2i} ( q - 1 )$. 
        Thereaf{t}er, the claim is easily seen to be true.
      \end{proof}
For the matrices $G$ which arose in Lemma~\ref{L:G}, 
there are $\mathcal{O} ( c_2^2 H^{1 - \mu + 2\eps} )$ 
possibilit{i}es with $0$ being the obvious upper bound 
if either of $c_2 H^{1 - \mu/2 + \eps}/\abs{Q_k}$ or $c_2\abs{Q_k}H^{-\mu/2 + \eps}$ is less than $1$. 
The ball of diameter $\leq H^{-\mu}$ centered around 
the point $\gamma\x \in \Fqtinv^2$ will not intersect the line 
$\Fqtinv\tp{( y, 1 )}$ in a ball of diameter any bigger than $H^{-\mu}$. 
If we f{i}x $k$ and $n$ in~\eqref{E:kn}, 
we have not more than $\mathcal{O}_{\mathcal{C}, \x} ( H^{1 - \mu + 2\eps} )$ 
such matrices $\gamma \in \SLtwoFqT$. 
Otherwise said, our target point \y\ belongs to some union of balls in $\Fqtinv\tp{( y, 1 )}$ 
whose one-dimensional Lebesgue measure is
      \begin{equation}\label{E:sum}
        \ll_{\mathcal{C}, \x} H^{1 - 2\mu + 2\eps} = ( q^n\abs{Q_{k - 1}Q_k} )^{1 - 2\mu + 2\eps}
      \end{equation}
by the def{i}ni{t}ion of $H$. When $\eps$ is small 
enough so that $1 - 2\mu + 2\eps < 0$ (possible since $\mu > 1/2$),
the term in~\eqref{E:sum} sums up to something f{i}nite 
when we consider all $k \geq 1$ and $n \geq 0$. 
The Borel-Cantelli lemma tells us that the measure 
of the set $\mathcal{C}_{\mu} \subset \Fqtinv\tp{(y, 1)}$ is zero.


\begin{thebibliography}{18} 
  
\bibitem[Bugeaud and Laurent(2005)]{BL05}
Y. Bugeaud and M. Laurent.
On exponents of homogeneous and inhomogeneous {D}iophant{i}ne approximat{i}on.
\emph{Mosc.\ Math.\ J.}, 5(4):747--766, 2005.

\bibitem[Bugeaud and Zhang()]{BZ19}
Y. Bugeaud and Z. L. Zhang.
On homogeneous and inhomogeneous {D}iophant{i}ne approximat{i}on over the f{i}elds of formal power series.
(To appear in Paci{f}ic J.\ Math.).

\bibitem[Cassels(1957)]{Cas57}
J.~W.~S. Cassels.
\emph{An introduction to {D}iophant{i}ne approximat{i}on}.
Cambridge Tracts in Mathemat{i}cs and Mathemat{i}cal Physics, No. 45.
Cambridge University Press, New York, 1957.

\bibitem[Fuchs(2010)]{Fuc10}
M. Fuchs.
Metrical theorems for inhomogeneous {D}iophant{i}ne approximat{i}on in posit{i}ve characterist{i}c.
\emph{Acta Arith.}, 141(2): 191--208, 2010.

\bibitem[{Ghosh} et~al.(){Ghosh}, {Gorodnik}, and {Nevo}]{GGN3}
A. Ghosh, A. Gorodnik and A. Nevo.
Best possible rates of distribut{i}on of dense lat{t}{i}ce orbits in homogeneous spaces.
(To appear in J.\ Reine Angew.\ Math.).

\bibitem[Ghosh et~al.(2015)Ghosh, Gorodnik, and Nevo]{GGN15}
A. Ghosh, A. Gorodnik and A. Nevo.
Diophant{i}ne approximat{i}on exponents on homogeneous variet{i}es.
In \emph{Recent trends in ergodic theory and dynamical systems},
\emph{Contemp.\ Math.}
631: 181--200, 2015

\bibitem[Khinchin(1964)]{Khi64}
A.~Ya. Khinchin.
\emph{Cont{i}nued fract{i}ons}.
The University of Chicago Press, Chicago, Ill.-London, 1964.

\bibitem[Kim and Nakada(2011)]{KN11}
D. H. Kim and H. Nakada.
Metric inhomogeneous {D}iophant{i}ne approximat{i}on on the f{i}eld of formal {L}aurent series.
\emph{Acta Arith.}, 150(2): 129--142, 2011.

\bibitem[Kristensen(2011)]{Kri11}
S.~Kristensen.
Metric inhomogeneous {D}iophant{i}ne approximat{i}on in posi{t}ive characterist{i}c.
\emph{Math.\ Scand.}, 108(1): 55--76, 2011.

\bibitem[Laurent and Nogueira(2012)]{LN12}
M. Laurent and A. Nogueira.
Approximat{i}on to points in the plane by $\mathrm{SL} (2,\mathbb{Z})$-orbits.
\emph{J.\ London Math.\ Soc.}, 85(2): 409--429, 2012.

\bibitem[Ma and Su(2008)]{MS08}
C. Ma and W. Y. Su.
Inhomogeneous {D}iophant{i}ne approximat{i}on over the f{i}eld of formal {L}aurent series.
\emph{F{i}nite F{i}elds Appl.}, 14(2): 361--378, 2008.

\bibitem[Maucourant and Weiss(2012)]{MW12}
F. Maucourant and B. Weiss.
Lat{t}{i}ce act{i}ons on the plane revisited.
\emph{Geom. Dedicata}, 157: 1--21, 2012.

\bibitem[Pollicot{t}(2011)]{Pol11}
M. Pollicot{t}.
Rates of convergence for linear act{i}ons of cocompact lat{t}{i}ces on the complex plane.
\emph{Integers}, 11B: Paper No.~A12, 2011.

\bibitem[Rosen(2002)]{Ros02}
M. Rosen.
\emph{Number theory in funct{i}on f{i}elds}, Springer-Verlag, New York, 2002.

\bibitem[Schmidt(2000)]{Sch00}
W. M. Schmidt.
On cont{i}nued fract{i}ons and diophant{i}ne approximat{i}on in power series fields.
\emph{Acta Arith.}, 95(2): 139--166, 2000.

\bibitem[Singhal(2017)]{Sin17}
L.~Singhal.
Diophant{i}ne exponents for standard linear act{i}ons of {${\rm SL}_2$} over discrete rings in {$\mathbb{C}$}.
\emph{Acta Arith.}, 177(1): 53--73, 2017.

\bibitem[Sprindzhuk(1969)]{Spr69}
V.~G. Sprindzhuk.
\emph{Mahler's problem in metric number theory}.
American Mathematical Soc., 1969.

\bibitem[Zheng()]{Zhe17}
Z. Y. Zheng.
Simultaneous {D}iophant{i}ne {A}pproximat{i}on in {F}unct{i}on {F}ields.
arXiv preprint arXiv:1711.03721, 2017.
  
\end{thebibliography}
\end{document}